\newtheorem{proposition}{Proposition}[section]
\newtheorem{lemma}[proposition]{Lemma}
\newtheorem{corollary}[proposition]{Corollary}
\newtheorem{theorem}[proposition]{Theorem}
\theoremstyle{definition}
\newtheorem{definition}[proposition]{Definition}
\newtheorem{example}[proposition]{Example}
\newtheorem{examples}[proposition]{Examples}
\newtheorem{remark}[proposition]{Remark}
\newcommand{\thlabel}[1]{\label{th:#1}}
\newcommand{\thref}[1]{Theorem~\ref{th:#1}}
\newcommand{\selabel}[1]{\label{se:#1}}
\newcommand{\seref}[1]{Section~\ref{se:#1}}
\newcommand{\lelabel}[1]{\label{le:#1}}
\newcommand{\leref}[1]{Lemma~\ref{le:#1}}
\newcommand{\prlabel}[1]{\label{pr:#1}}
\newcommand{\prref}[1]{Proposition~\ref{pr:#1}}
\newcommand{\colabel}[1]{\label{co:#1}}
\newcommand{\coref}[1]{Corollary~\ref{co:#1}}
\newcommand{\relabel}[1]{\label{re:#1}}
\newcommand{\reref}[1]{Remark~\ref{re:#1}}
\newcommand{\exlabel}[1]{\label{ex:#1}}
\newcommand{\exref}[1]{Example~\ref{ex:#1}}
\newcommand{\delabel}[1]{\label{de:#1}}
\newcommand{\deref}[1]{Definition~\ref{de:#1}}
\newcommand{\eqlabel}[1]{\label{eq:#1}}
\newcommand{\equref}[1]{(\ref{eq:#1})}
\newcommand{\Hom}{{\rm Hom}}
\newcommand{\End}{{\rm End}}
\newcommand{\Aut}{{\rm Aut}\,}
\def\lan{\langle}
\def\ran{\rangle}
\def\ot{\otimes}
\def\ZZ{{\mathbb Z}}
\newcommand{\Cc}{\mathcal{C}}
\def\*C{{}^*\hspace*{-1pt}{\Cc}}
\def\text#1{{\rm {\rm #1}}}
\begin{document}

\title[Classifying bicrossed products of Hopf algebras]
{Classifying bicrossed products of Hopf algebras}

\author{A. L. Agore}
\address{Faculty of Engineering, Vrije Universiteit Brussel, Pleinlaan 2, B-1050 Brussels, Belgium}
\email{ana.agore@vub.ac.be and ana.agore@gmail.com}

\author{C.G. Bontea}
\address{Faculty of Mathematics and Computer Science, University of Bucharest, Str.
Academiei 14, RO-010014 Bucharest 1, Romania}
\email{costel.bontea@gmail.com}

\author{G. Militaru}
\address{Faculty of Mathematics and Computer Science, University of Bucharest, Str.
Academiei 14, RO-010014 Bucharest 1, Romania}
\email{gigel.militaru@fmi.unibuc.ro and gigel.militaru@gmail.com}
\subjclass[2010]{16T10, 16T05, 16S40}

\thanks{A.L. Agore is research fellow ''aspirant'' of FWO-Vlaanderen.
This work was supported by a grant of the Romanian National
Authority for Scientific Research, CNCS-UEFISCDI, grant no.
88/05.10.2011.}

\subjclass[2010]{16T05, 16S40} \keywords{Bicrossed product,
factorization problem, classification of Hopf algebras.}

%\maketitle

\begin{abstract}
Let $A$ and $H$ be two Hopf algebras. We shall classify up to an
isomorphism that stabilizes $A$ all Hopf algebras $E$ that
factorize through $A$ and $H$ by a cohomological type object
${\mathcal H}^{2} (A, H)$. Equivalently, we classify up to a left
$A$-linear Hopf algebra isomorphism, the set of all bicrossed
products $A \bowtie H$ associated to all possible matched pairs of
Hopf algebras $(A, H, \triangleleft, \triangleright)$ that can be
defined between $A$ and $H$. In the construction of ${\mathcal
H}^{2} (A, H)$ the key role is played by special elements of
$CoZ^{1} (H, A) \times \Aut_{\rm CoAlg}^1 (H)$, where $CoZ^{1} (H,
A)$ is the group of unitary cocentral maps and $\Aut_{\rm CoAlg}^1
(H)$ is the group of unitary automorphisms of the coalgebra $H$.
Among several applications and examples, all bicrossed products
$H_4 \bowtie k[C_n]$ are described by generators and relations and
classified: they are quantum groups at roots of unity $H_{4n, \,
\omega}$ which are classified by pure arithmetic properties of the
ring $\mathbb{Z}_n$. The Dirichlet's theorem on primes is used to
count the number of types of isomorphisms of this family of
$4n$-dimensional quantum groups. As a consequence of our approach
the group $\Aut_{\rm Hopf}(H_{4n, \, \omega})$ of Hopf algebra
automorphisms is fully described.
\end{abstract}

\maketitle

\section*{Introduction}
The aim of this paper is to prove that there exists a very rich
theory behind the so-called bicrossed product (double cross
product in Majid's terminology) of two objects arising from the
factorization problem. This theory deserves to be developed
further mainly because of its major impact in at least three
different problems: the classification of objects of a given
dimension, the development of a general descent type theory for a
given extension $ A \subseteq E$ recently introduced in
\cite{am12} and the development of some new types of cohomologies
that will be the key players for both problems. All results
presented below provide an answer at the level of Hopf algebras
for the classification problem and offer an argument for the role
that bicrossed products can play. In particular, we describe a
tempting way of approaching the classification problem for finite
quantum groups. The pioneers of this approach, at the level of
groups, were Douglas \cite{Douglas}, R\'{e}dei \cite{Redei} and
Cohn \cite{Cohn} related to the problem of classifying all groups
that factorize through two cyclic groups.

In order to maintain a general frame for our discussion,
considering that bicrossed products of two objects were introduced
and studied in various areas of mathematics, we will consider
$\Cc$ a category whose objects are sets endowed with various
algebraic, topological or differential structures. To illustrate,
we can think of $\Cc$ as the category of groups, groupoids or
quantum groupoids, algebras, Hopf algebras, local compact groups
or local compact quantum groups, Lie groups, Lie algebras and so
on. Let $A$ and $H$ be two given objects of $\Cc$. We say that an
object $E \in \Cc$ \emph{factorizes} through $A$ and $H$ if $E$
can be written as a 'product' of $A$ and $H$, where $A$ and $H$
are subobjects of $E$ having minimal intersection. Here, the
'product' depends on the nature of the category. For instance, if
$\Cc$ is the category of groups, then a group $E$ factorizes
through two subgroups $A$ and $H$ if $E = AH$ and $A \cap H =
\{1\}$. This is called in group theory an exact factorization of
$E$ and can be restated equivalently as the fact that the
multiplication map $ A \times H \to E$, $(a, h) \mapsto ah$ is
bijective. The last assertion is also taken as a definition of
factorization for other categories like: algebras \cite{cap}, Hopf
algebras \cite{majid}, Lie groups or Lie algebras \cite{majid3},
\cite{Kro}, \cite{Mic}, locally compact quantum groups \cite{VV}
and so on. The following natural question arises:

\textbf{The factorization problem.} \textit{Let $A$ and $H$ be two
given objects of $\Cc$. Describe and classify up to an isomorphism
all objects of $\Cc$ that factorize through $A$ and $H$.}

We shall provide a short historical background of this problem.
First we consider the case in which the category $\Cc = {\mathcal
Ab}$, where ${\mathcal Ab}$ is the category of abelian groups or,
more specifically, the category of left $R$-modules over a ring
$R$. In this case the factorization problem becomes trivial: an
abelian group $E$ factorizes through two subgroups $A$ and $H$ if
and only if $E \cong A \oplus H$, that is $E$ is the coproduct of
$A$ and $H$. Now we take a step forward and consider $\Cc =
{\mathcal Gr }$, the category of groups. Here things change
radically: the factorization problem becomes very difficult and it
was formulated by O. Ore \cite{Ore} but its roots are much older
and descend to E. Maillet's 1900 paper \cite{Maillet}. It can be
seen as the dual of the more famous \textit{extension problem} of
O. L. H\"{o}lder. Moreover, little progress has been made on this
problem so far. For instance, in the case of two cyclic groups $A$
and $H$, not both finite, the problem was opened by R\'{e}dei in
\cite{Redei} and closed by Cohn in \cite{Cohn}, without the
classification part. If $A$ and $H$ are both finite cyclic groups
the problem is more difficult and seems to be still an open
question, even though J. Douglas \cite{Douglas} has devoted four
papers to the subject. In \cite{dar} all groups that factorize
through an alternating group or a symmetric group are completely
described. One of the famous results in this direction remains
Ito's theorem \cite{Ito} proved in the 50's: any product of two
abelian groups is a meta-abelian group. An important step in
dealing with the factorization problem was the construction of the
bicrossed product $A \bowtie H $ associated to a matched pair $(A,
H, \triangleleft, \triangleright )$ of groups, given by Takeuchi
\cite{Takeuchi}. A group $E$ factorizes through two subgroups $A$
and $H$ if and only if there exists a matched pair of groups $(A,
H, \triangleleft, \triangleright )$ such that $E \cong A \bowtie
H$. Thus, at the level of groups, the factorization problem can be
restated in a purely computational manner: \emph{Let $A$ and $H$
be two given groups. Describe the set of all matched pairs $(A, H,
\triangleleft, \triangleright )$ and classify up to an isomorphism
all bicrossed products $A \bowtie H$.}

This is a strategy that has to be followed for the factorization
problem in any category $\Cc$. In fact, the bicrossed product
construction at the level of groups served as a model for similar
constructions in other fields of mathematics. The first step was
made by Majid in \cite[Proposition 3.12]{majid} where a twisted
version of the bicrossed product of two Hopf algebras associated
to a matched pair was introduced, under the name of double cross
product. Its purpose was not to solve the factorization problem,
but to give an elegant description for the Drinfel'd double of a
finite dimensional Hopf algebra. Then the construction was
performed for Lie Algebras \cite{majid3}, \cite{Mic} and Lie
groups \cite{majid3}, \cite{Kro}, algebras \cite{cap} or more
generally in \cite{cirio}, coalgebras \cite{CIMZ}, groupoids
\cite{AA}, \cite{majard}, locally compact groups \cite{baaj} or
locally compact quantum groups \cite{VV}, and so on. At the level
of algebras the bicrossed product of two algebras is usually
called the \emph{twisted tensor product} algebra and its
construction plays an important role in noncommutative geometry in
the sense of Brezinski and Majid (see \cite{bm98}, \cite{bm200},
\cite{brz01}).

To conclude, if we are only looking for the description part of
the factorization problem, formulated in an arbitrary category
$\Cc$, the following general principle should work: an object
$E\in \Cc$ factorizes through $A$ and $H$ if and only if $E \cong
A \bowtie H$, where $A \bowtie H$ is a 'bicrossed product' in the
category $\Cc$ associated to a 'matched pair' between the objects
$A$ and $H$. The classification part of the factorization problem
is now clear: it consists of classifying the bicrossed products $A
\bowtie H$ associated to all matched pairs between $A$ and $H$.
This is the strategy that we follow for the category of Hopf
algebras. For other categories, the steps taken in this direction
are still shy, including the group case as well as the algebra
case. The problem of classifying bicrossed products of two
algebras started with \cite[Examples 2.11]{CIMZ} where all
bicrossed products between two group algebras of dimension two are
completely described and classified. For recent results related to
the classification of bicrossed product of two algebras we refer
to \cite{Pena}, \cite{Pena2}, \cite{gucci}.

Looking at the classification part of the factorization problem
for Hopf algebras we emphasize that the theory of bicrossed
products can play an important role, not exploited until now, in
the classification of finite quantum groups. The problem of
classifying up to isomorphism all Hopf algebras of a given
dimension is one of the central themes in Hopf algebra theory.
There are complete classifications for Hopf algebras of given
small dimensions: see for instance, \cite{AD}, \cite{AS},
\cite{beat}, \cite{HNG}, \cite{mas2}, \cite{Ng}, \cite{natale},
\cite{Zhu} and their list of references. However, there are no
general methods of tackling this problem. Following H\"{o}lder's
model from group extensions, the only method for classifying a
certain class (pointed, semisimple, etc) of Hopf algebras of a
given dimension relies on the theory of Hopf algebra extensions
\cite{AD}, \cite{mas} and the Radford biproduct. Unfortunately,
for Hopf algebras the method has its limitations and is not an
exhaustive one. We shall highlight a new way of classifying Hopf
algebras of a given dimension using bicrossed products instead of
extension theory for Hopf algebras.

The paper is organized as follows. In \seref{prel} we recall the
construction of the bicrossed product associated to a matched pair
of Hopf algebras $(A, H, \triangleleft, \triangleright)$. Majid's
\thref{carMaj} proves that the factorization problem for Hopf
algebras can be restated in a computational manner exactly as in
the group case: \textit{Let $A$ and $H$ be two given Hopf
algebras. Describe the set of all matched pairs $(A, H,
\triangleright, \triangleleft)$ and classify up to an isomorphism
all bicrossed products $A \bowtie \, H$.}

\seref{sectth} is devoted to proving some purely technical results
which will be intensively used throughout the paper.
\thref{toatemorf} describes completely the set of all morphisms of
Hopf algebras between two arbitrary bicrossed products. In
particular, the set of all Hopf algebra maps between two
semi-direct (or smash) products of Hopf algebras is described in
\coref{endosmh}. This result gives the first application: the
parametrization of all Hopf algebra morphisms between two
Drinfel'd doubles associated to two finite groups $G$ and $H$ is
given in \coref{endoDH}. In particular, if $G = H$, the space
$\End_{\rm Hopf} \bigl(D(k[G])\bigl)$ of all Hopf algebra
endomorphisms is fully described.

\seref{casbic} deals with the classification part of the
factorization problem. Let $A$ and $H$ be two given Hopf algebras.
\thref{clasth} is the classification theorem for bicrossed
products: all Hopf algebras $E$ that factorize through $A$ and $H$
are classified up to an isomorphism that stabilizes $A$ by a
cohomological type object ${\mathcal H}^{2} (A, H)$ in the
construction of which the key role is played by pairs $(r, v) \in
CoZ^{1} (H, A) \times \Aut_{\rm CoAlg}^1 (H)$, consisting of a
unitary cocentral map $r : H\to A$ and a unitary automorphism of
coalgebras $v: H\to H$ related by a certain compatibility
condition. The classification of bicrossed products up to an
isomorphism that stabilizes one of the terms has at least two
strong motivations: the first one is the cohomological point of
view which descends from the classification theory of Holder's
group extensions \cite[Theorem 7.34]{R} and the second one is the
problem of describing and classifying the factorization $A$-forms
of a Hopf algebra extension from descent theory \cite{am12}. We
indicate only two of the several applications of \thref{clasth}:
\coref{douadh} gives necessary and sufficient conditions for two
generalized quantum doubles $D_{\lambda} (A, H)$ and $D_{\lambda'}
(A, H')$ to be isomorphic as Hopf algebras and left $A$-modules.
\coref{SchZ} is a Schur-Zassenhaus type theorem for the bicrossed
product of Hopf algebras: it provides necessary and sufficient
conditions for a bicrossed product $A \bowtie H$ to be isomorphic
to a semi-direct product $A\#' H'$ of Hopf algebras.

In \seref{secexemple} we provide some explicit examples for the
factorization problem: for two given Hopf algebras $A$ and $H$ we
describe by generators and relations and classify up to an
isomorphism all Hopf algebras $E$ that factorize through $A$ and
$H$. We go through the following three steps: first of all, we
compute the set of all matched pairs between $A$ and $H$. This is
the computational part of our schedule and can not be avoided.
Then we describe by generators and relations the bicrossed
products $A \bowtie H$ associated to all these matched pairs.
Finally, using \thref{toatemorf}, we classify up to an isomorphism
these bicrossed products $A \bowtie H$. As an application, the
group $\Aut _{\rm Hopf} (A \bowtie H)$ of all Hopf algebra
automorphisms of a given bicrossed product is computed.

Let $k$ be a field of characteristic $\neq 2$ and $H_4$ the
Sweedler's four dimensional Hopf algebra. For a positive integer
$n$, let $C_n$ be the cyclic group of order $n$ generated by $c$
and $U_n (k) = \{ \omega \in k \, | \, \omega^n = 1 \}$ the cyclic
group of $n$-th roots of unity in $k$ of order $\nu (n) = |U_n
(k)|$. The group $U_n (k)$ depends heavily on the base field $k$.
\prref{mapex1} proves that $U_n (k)$ parameterizes the set of all
matched pairs $(H_4, k[C_n], \triangleleft, \triangleright)$, i.e.
there exists a bijective correspondence between the set of all
matched pairs $(H_4, k[C_n], \triangleleft, \triangleright)$ and
the group $U_n (k)$. \coref{primaclasi} shows that a Hopf algebra
$E$ factorizes through $H_4$ and $k[C_n]$ if and only if $E \cong
H_{4n, \, \omega}$, for some $\omega \in U_n(k)$; this quantum
group $H_{4n, \, \omega}$ at root of unity is described explicitly
by generators and relations. It is a non-commutative
non-cocommutative, pointed and non-semisimple $4n$-dimensional
Hopf algebra. \thref{2.2} and \thref{clasnoudir} describe
precisely the number of types of isomorphisms of this family of
Hopf algebras $\{ H_{4n, \, \omega} \, | \, \omega \in U_n(k) \}$.
The beauty of this classification result is given by Dirichlet's
theorem on primes in an arithmetical progression which was used in
a key step in proving \thref{clasnoudir}. Let $\nu(n) =
p_1^{\alpha_1} \cdots p_r^{\alpha_r}$ be the prime decomposition
of $\nu(n)$. If $\nu(n)$ is odd, then the number of types of such
Hopf algebras is $(\alpha_1 + 1)(\alpha_2 + 1) \cdots (\alpha_r +
1)$. On the other hand, if $\nu(n) = 2^{\alpha_1} p_2^{\alpha_2}
\cdots p_r^{\alpha_r}$ is even, then the number of types of such
Hopf algebras is $\alpha_1(\alpha_2 + 1) \cdots (\alpha_r + 1)$.
As an application, the infinite abelian group $\Aut_{\rm
Hopf}(H_{4n, \, \omega})$ of Hopf algebra automorphisms of $H_{4n,
\, \omega}$ is described in \coref{auto}.

The results proven in this paper at the level of Hopf algebras can
serve as a model for obtaining similar results for all the fields
of mathematics where the bicrossed product is constructed.

\section{Preliminaries}\selabel{prel}
Throughout this paper, $k$ will be a field. For a positive integer
$n$ we denote by $U_n (k) = \{ \omega \in k \, | \, \omega^n = 1
\}$ the cyclic group of $n$-th roots of unity in $k$, and by $\nu
(n) = |U_n (k)|$ the order of the group $U_n (k)$. Of course, $\nu
(n)$ is a divisor of $n$; if $\nu (n) = n$, then any generator of
$U_n (k)$ is called a primitive $n$-th root of unity \cite{hung}.
The group $U_n (k)$ depends heavily on the base field $k$. For
instance, $U_{p^t} (k) = \{1\}$, for any positive integer $t$, if
$k$ is a field of characteristic $p > 0$. If ${\rm Char}(k) = p$
and $p | n$ then there is no primitive $n$-th root of unity in $k$
\cite[Pag. 295]{hung}. Furthermore, if $k$ is a finite field with
$p^k$ elements, then $\nu (n) = {\rm gcd} \, (n, p^k -1)$. If
${\rm Char}(k) \nmid n$ and $k$ is algebraically closed then $\nu
(n) = n$.

Unless specified otherwise, all algebras, coalgebras, bialgebras,
Hopf algebras, tensor products and homomorphisms are over $k$. For
a coalgebra $C$, we use Sweedler's $\Sigma$-notation: $\Delta(c) =
c_{(1)}\ot c_{(2)}$, $(I\ot\Delta)\Delta(c) = c_{(1)}\ot
c_{(2)}\ot c_{(3)}$, etc (summation understood). Let $A$ and $H$
be two Hopf algebras. $H$ is called a right $A$-module coalgebra
if $H$ is a coalgebra in the monoidal category ${\mathcal M}_A $
of right $A$-modules, i.e. there exists $\triangleleft : H \otimes
A \rightarrow H$ a morphism of coalgebras such that $(H,
\triangleleft) $ is a right $A$-module. A morphism between two
right $A$-module coalgebras $(H, \triangleleft)$ and $(H',
\triangleleft')$ is a morphism of coalgebras $\psi: H \to H'$ that
is also right $A$-linear. Furthermore, $\psi$ is called unitary if
$\psi (1_H) = 1_{H'}$. Similarly, $A$ is a left $H$-module
coalgebra if $A$ is a coalgebra in the monoidal category of left
$H$-modules, that is there exists $\triangleright : H \ot A \to A$
a morphism of coalgebras such that $(A, \triangleright)$ is also a
left $H$-module. For further computations, the fact that two
$k$-linear maps $\triangleleft : H \otimes A \rightarrow H$,
$\triangleright: H \otimes A \rightarrow A$ are coalgebra maps can
be written explicitly as follows:
\begin{eqnarray}
\Delta_{H}(h \triangleleft a) &{=}& h_{(1)} \triangleleft a_{(1)}
\otimes h_{(2)} \triangleleft a_{(2)}, \,\,\,\,\,
\varepsilon_{A}(h \triangleleft a) =
\varepsilon_{H}(h)\varepsilon_{A}(a)
\eqlabel{6}\\
\Delta_{A}(h \triangleright a) &{=}& h_{(1)} \triangleright
a_{(1)} \otimes h_{(2)} \triangleright a_{(2)}, \,\,\,\,\,
\varepsilon_{A}(h \triangleright a) =
\varepsilon_{H}(h)\varepsilon_{A}(a) \eqlabel{8}
\end{eqnarray}
for all $h \in H$, $a \in A$. The actions $\triangleleft : H
\otimes A \rightarrow H$, $\triangleright: H \otimes A \rightarrow
A$ are called \emph{trivial} if $h \triangleleft a = \varepsilon_A
(a) h$ and respectively $h \triangleright a = \varepsilon_H(h) a$,
for all $a\in A$ and $h\in H$.

\subsection*{Bicrossed product of Hopf algebras}\selabel{1.2}
The bicrossed product of two Hopf algebras was introduced by Majid
in \cite[Proposition 3.12]{majid} under the name of double cross
product. We shall adopt the name of bicrossed product from
\cite[Theorem IX 2.3]{Kassel} that is also used for the similar
construction at the level of groups \cite{Takeuchi}, groupoids
\cite{AA} etc. A \textit{matched pair} of Hopf algebras is a
system $(A, H, \triangleleft, \triangleright)$, where $A$ and $H$
are Hopf algebras, $\triangleleft : H \otimes A \rightarrow H$,
$\triangleright: H \otimes A \rightarrow A$ are coalgebra maps
such that $(A, \triangleright)$ is a left $H$-module coalgebra,
$(H, \triangleleft)$ is a right $A$-module coalgebra and the
following compatibilities hold for any $a$, $b\in A$, $g$, $h\in
H$.
\begin{eqnarray}
h \triangleright1_{A} &{=}& \varepsilon_{H}(h)1_{A}, \quad 1_{H}
\triangleleft a =
\varepsilon_{A}(a)1_{H} \eqlabel{mp1} \\
g \triangleright(ab) &{=}& (g_{(1)} \triangleright a_{(1)}) \bigl
( (g_{(2)}\triangleleft a_{(2)})\triangleright b \bigl)
\eqlabel{mp2} \\
(g h) \triangleleft a &{=}& \bigl( g \triangleleft (h_{(1)}
\triangleright a_{(1)}) \bigl) (h_{(2)} \triangleleft a_{(2)})
\eqlabel{mp3} \\
g_{(1)} \triangleleft a_{(1)} \otimes g_{(2)} \triangleright
a_{(2)} &{=}& g_{(2)} \triangleleft a_{(2)} \otimes g_{(1)}
\triangleright a_{(1)} \eqlabel{mp4}
\end{eqnarray}
Let $(A, H, \triangleleft, \triangleright)$ be a matched pair of
Hopf algebras; the \textit{bicrossed product} $A \bowtie H$ of $A$
with $H$ is the $k$-module $A\ot H$ with the multiplication given
by
\begin{equation}\eqlabel{0010}
(a \bowtie h) \cdot (c \bowtie g):= a (h_{(1)}\triangleright
c_{(1)}) \bowtie (h_{(2)} \triangleleft c_{(2)}) g
\end{equation}
for all $a$, $c\in A$, $h$, $g\in H$, where we denoted $a\ot h$ by
$a\bowtie h$. $A \bowtie H$ is a Hopf algebra with the antipode
\begin{equation}\eqlabel{antipbic}
S_{A \bowtie H} ( a \bowtie h ) = S_H (h_{(2)}) \triangleright S_A
(a_{(2)}) \, \bowtie \, S_H (h_{(1)}) \triangleleft S_A (a_{(1)})
\end{equation}
for all $a\in A$ and $h\in H$.

\begin{examples} \exlabel{exempleban}
1. Let $(A, \triangleright)$ be a left $H$-module coalgebra and
consider $H$ as a right $A$-module coalgebra via the trivial
action, i.e. $h \triangleleft a = \varepsilon_A(a) h$. Then $(A,
H, \triangleleft, \triangleright)$ is a matched pair of Hopf
algebras if and only if $(A, \triangleright)$ is also a left
$H$-module algebra and the following compatibility condition holds
\begin{equation}\eqlabel{smash1}
g_{(1)} \otimes g_{(2)} \triangleright a =  g_{(2)} \otimes
g_{(1)} \triangleright a
\end{equation}
for all $g \in H$ and $a\in A$. In this case, the associated
bicrossed product $A\bowtie H = A\# H$ is the left version of the
semi-direct (smash) product of Hopf algebras as defined by Molnar
\cite{Mo} in the cocommutative case, for which the compatibility
condition \equref{smash1} holds automatically. Thus, $A\# H$ is
the $k$-module $A\ot H$, where the multiplication \equref{0010}
takes the form:
\begin{equation}\eqlabel{smash2}
(a \# h) \cdot (c \# g):= a \, (h_{(1)} \triangleright c )\,\#\,
h_{(2)} g
\end{equation}
for all $a$, $c\in A$, $h$, $g\in H$, where we denoted $a\ot h$ by
$a\# h$. $A \# H$ is a Hopf algebra with the coalgebra structure
given by the tensor product of coalgebras and the antipode
$$
S_{A \# H} ( a \# h ) = S_H (h_{(2)}) \triangleright S_A (a) \, \#
\, S_H (h_{(1)})
$$
for all $a\in A$ and $h\in H$.

Similarly, let $(H, \triangleleft)$ be a right $A$-module
coalgebra and consider $A$ as left $H$-module coalgebra via the
trivial action, i.e $h \triangleright a := \varepsilon_H(h) a$.
Then $(A, H, \triangleleft, \triangleright)$ is a matched pair of
Hopf algebras if and only if $(H, \triangleleft)$ is also a right
$A$-module algebra and
\begin{equation}\eqlabel{smash3}
g \triangleleft a_{(1)} \otimes a_{(2)} = g \triangleleft a_{(2)}
\otimes a_{(1)}
\end{equation}
for all $g\in H$ and $a\in A$. In this case, the associated
bicrossed product $A\bowtie H = A\#^r H$ is the right version of
the smash product of Hopf algebras. Thus, $A\#^r H$ is the
$k$-module $A\ot H$, where the multiplication \equref{0010} takes
the form:
\begin{equation}\eqlabel{smash4}
(a \# h) \cdot (c \# g):= a \, c_{(1)} \,\#\, (h \triangleleft
c_{(2)}) g
\end{equation}
for all $a$, $c\in A$, $h$, $g\in H$, where we denoted $a\ot h$ by
$a\# h$. $A \#^r H$ is a Hopf algebra with the coalgebra structure
given by the tensor product of coalgebras and the antipode
$$
S_{A \#^r H} ( a \# h ) = S_A (a_{(2)}) \, \# \, S_H (h)
\triangleleft S_A (a_{(1)})
$$
for all $a\in A$ and $h\in H$.

2. Let $G$ and $K$ be two groups, $A= k[G]$ and $H = k[K]$ the
group algebras. There exists a bijection between the set of all
matched pairs of Hopf algebras $(k[G], k[K], \triangleleft,
\triangleright)$ and the set of all matched pairs of groups $(G,
K, \tilde{\triangleleft}, \tilde{\triangleright})$ in the sense of
Takeuchi \cite{Takeuchi}. The bijection is given such that there
exists an isomorphism of Hopf algebras $k[G] \bowtie k[K] \cong k
[G \bowtie H]$, where $G \bowtie H$ is the Takeuchi's bicrossed
product of groups (\cite[pg. 207]{Kassel}.

3. The fundamental example of a bicrossed product is the Drinfel'd
double $D(H)$. Let $H$ be a finite dimensional Hopf algebra. Then
we have a matched pair of Hopf algebras $( (H^*)^{\rm cop}, H,
\triangleleft, \triangleright)$, where the actions $\triangleleft$
and $\triangleright$ are defined by:
\begin{equation} \eqlabel{dubluact}
h \triangleleft h^* := \lan h^*, \, S_H^{-1} (h_{(3)}) h_{(1)}
\ran h_{(2)}, \quad h \triangleright h^* := \lan h^*, \, S_H^{-1}
(h_{(2)}) \, ? \, h_{(1)} \ran
\end{equation}
for all $h\in H$ and $h^* \in H^*$ (\cite[Theorem
IX.3.5]{Kassel}). The Drinfel'd double of $H$ is the bicrossed
product associated to this matched pair, i.e. $D(H) = (H^*)^{\rm
cop} \bowtie H$.

4. Majid generalized the construction of the Drinfel'd double to
the infinite dimensional case. Let $A$ and $H$ be two Hopf
algebras and $\lambda: H \ot A \rightarrow k$ a skew pairing. Then
there exists a matched pair of Hopf algebras $(A, H, \triangleleft
= \triangleleft_{\lambda}, \, \triangleright =
\triangleright_{\lambda})$, where the actions $\triangleleft$,
$\triangleright$ arise from $\lambda$ via
\begin{eqnarray}
h \triangleleft a &{=}& h_{(2)} \lambda^{-1}(h_{(1)} , a_{(1)})
\lambda(h_{(3)} , a_{(2)}) = h_{(2)} \lambda \bigl( S( h_{(1)})
h_{(3)} , \, a \bigl ) \eqlabel{dhsk1}\\
h \triangleright a &{=}& a_{(2)} \lambda^{-1}(h_{(1)} , a_{(1)})
\lambda(h_{(2)} , a_{(3)}) = a_{(2)} \lambda \bigl( S(h) , \,
a_{(1)} S(a_{(3)}) \bigl) \eqlabel{dhsk2}
\end{eqnarray}
for all $h\in H$ and $a\in A$ (\cite[Example 7.2.6]{majid}). The
corresponding bicrossed product $A \bowtie_{\lambda} H$ associated
to this matched pair is called a \textit{generalized quantum
double} and it will be denoted by $D_{\lambda} (A, H)$.
\end{examples}

A Hopf algebra $E$ \emph{factorizes} through two Hopf algebras $A$
and $H$ if there exists injective Hopf algebra maps $i : A \to E $
and $j : H\to E$ such that the map
$$
A \ot H \to E, \quad a \ot h \mapsto i(a) j(h)
$$
is bijective. The main theorem which characterizes the bicrossed
product is the next theorem due to Majid \cite[Theorem
7.2.3]{majid2}. The normal version of it was recently proven in
\cite[Proposition 2.2]{burCentral} and more generally in
\cite[Theorem 2.1]{agorecia}. First we recall that a Hopf
subalgebra $A$ of a Hopf algebra $E$ is called \textit{normal} if
$x_{(1)}aS(x_{(2)}) \in A$ and $S(x_{1})ax_{(2)} \in A$, for all
$x \in E$ and $a \in A$.

\begin{theorem}\thlabel{carMaj}
Let $A$, $H$ be two Hopf algebras. A Hopf algebra $E$ factorizes
through $A$ and $H$ if and only if there exists a matched pair of
Hopf algebras $(A, H, \triangleleft, \triangleright)$ such that $E
\cong A \bowtie H$, an isomorphism of Hopf algebras.

Furthermore, a Hopf algebra $E$ factorizes through a normal Hopf
subalgebra $A$ and a Hopf subalgebra $H$ if and only if $E$ is
isomorphic as a Hopf algebra to a semidirect product $A \# \, H$.
\end{theorem}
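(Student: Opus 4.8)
The plan is to prove both implications of the first assertion and then specialize to the normal case. For the \emph{if} part, suppose $E \cong A \bowtie H$; it is enough to show that $A \bowtie H$ itself factorizes through $A$ and $H$. I would check that $i_A : A \to A \bowtie H$, $a \mapsto a \bowtie 1_H$ and $i_H : H \to A \bowtie H$, $h \mapsto 1_A \bowtie h$ are injective Hopf algebra maps: they are coalgebra maps because $A \bowtie H$ carries the tensor product coalgebra structure, and they are algebra maps by a one-line computation from \equref{0010} using \equref{mp1} and the fact that $1_H$, $1_A$ are grouplike. Finally, again from \equref{0010} and \equref{mp1}, one gets $(a \bowtie 1_H)(1_A \bowtie h) = a \bowtie h$, so the map $A \ot H \to A \bowtie H$, $a \ot h \mapsto i_A(a) i_H(h)$ is the identity, hence bijective.

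The substance is the \emph{only if} part. Assume $E$ factorizes through $A$ and $H$ via injective Hopf algebra maps $i$ and $j$, and write $\phi : A \ot H \to E$, $\phi(a \ot h) = i(a) j(h)$, for the bijective multiplication map. The observation that makes the whole construction painless is that $\phi = m_E \circ (i \ot j)$ is a morphism of coalgebras: $i \ot j$ is one, and the multiplication $m_E$ of the bialgebra $E$ is a coalgebra map. Hence $\phi^{-1}$ is a coalgebra map as well, and so is $\Omega := \phi^{-1} \circ m_E \circ (j \ot i) : H \ot A \to A \ot H$. I would then \emph{define} $\triangleright := (\mathrm{id}_A \ot \varepsilon_H) \circ \Omega$ and $\triangleleft := (\varepsilon_A \ot \mathrm{id}_H) \circ \Omega$; these are coalgebra maps by construction, which is exactly \equref{6} and \equref{8}. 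Since a coalgebra map into a tensor product is recovered from its two legs, one obtains $\Omega(h \ot a) = (h_{(1)} \triangleright a_{(1)}) \ot (h_{(2)} \triangleleft a_{(2)})$, equivalently the straightening formula
\begin{equation*}
j(h)\, i(a) = i(h_{(1)} \triangleright a_{(1)}) \, j(h_{(2)} \triangleleft a_{(2)}).
\end{equation*}

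With this formula in hand the matched pair axioms fall out. The unit relations \equref{mp1} and the module unit laws $1_H \triangleright a = a$, $h \triangleleft 1_A = h$ follow by evaluating $\Omega$ on $1_H \ot a$ and $h \ot 1_A$ together with $\phi^{-1}(i(a)) = a \ot 1_H$, $\phi^{-1}(j(h)) = 1_A \ot h$. The compatibility \equref{mp4} is the one place I would exploit the coalgebra structure directly: comparing $\Delta_{A \ot H} \circ \Omega$ with $(\Omega \ot \Omega) \circ \Delta_{H \ot A}$ and projecting the outer two legs with the counits yields precisely $g_{(1)} \triangleleft a_{(1)} \ot g_{(2)} \triangleright a_{(2)} = g_{(2)} \triangleleft a_{(2)} \ot g_{(1)} \triangleright a_{(1)}$. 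The remaining identities, namely the left module axiom for $\triangleright$ together with \equref{mp3}, and the right module axiom for $\triangleleft$ together with \equref{mp2}, come from associativity in $E$: expanding $j(g)\bigl(j(h)i(a)\bigr) = j(gh)i(a)$ and $\bigl(j(h)i(a)\bigr)i(b) = j(h)i(ab)$ in two ways via the straightening formula, then comparing the $A$- and $H$-components (apply $\mathrm{id}_A \ot \varepsilon_H$ and $\varepsilon_A \ot \mathrm{id}_H$ after $\phi^{-1}$) and using injectivity of $\phi$. I expect this to be the main obstacle: it is routine but the double Sweedler indices must be tracked carefully. Once $(A, H, \triangleleft, \triangleright)$ is shown to be a matched pair, the straightening formula immediately gives that $\phi$ is multiplicative for the bicrossed product \equref{0010}; being already a coalgebra isomorphism, it is a bialgebra and hence a Hopf algebra isomorphism $A \bowtie H \cong E$.

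Finally, for the \emph{furthermore} I would prove that, under the identification $E \cong A \bowtie H$, the Hopf subalgebra $A$ is normal if and only if $\triangleleft$ is trivial, which by \exref{exempleban} means $A \bowtie H = A \# H$. If $\triangleleft$ is trivial the straightening formula reduces to $j(h)i(a) = i(h_{(1)} \triangleright a) j(h_{(2)})$, whence $j(h_{(1)})\,i(a)\,S_E(j(h_{(2)})) = i(h \triangleright a) \in i(A)$; the second normality condition $S_E(x_{(1)})\,i(a)\,x_{(2)} \in i(A)$ is checked by the same method, this time invoking the smash compatibility \equref{smash1} to reorder the comultiplication legs so that the antipode collapses them. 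Conversely, if $A$ is normal then $j(h_{(1)})i(a)S_E(j(h_{(2)})) \in i(A)$; applying $\phi^{-1}$ and then $\varepsilon_A \ot \mathrm{id}_H$ gives $(h_{(1)} \triangleleft a)\,S_H(h_{(2)}) \in k\,1_H$, and evaluating $\varepsilon_H$ pins the scalar to $\varepsilon_A(a)\varepsilon_H(h)$, so that $(h_{(1)} \triangleleft a)\,S_H(h_{(2)}) = \varepsilon_A(a)\varepsilon_H(h)\,1_H$. A standard convolution with $\mathrm{id}_H$ then forces $h \triangleleft a = \varepsilon_A(a)\,h$, i.e. $\triangleleft$ is trivial, completing the equivalence.
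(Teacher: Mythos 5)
Your proposal is correct, and the underlying construction is the same one the paper uses: your $\Omega=\phi^{-1}\circ m_E\circ(j\ot i)$ encodes exactly the paper's unique decomposition $ha=\sum_i a_i^{(a,h)}h_i^{(a,h)}$, and the two counit projections of $\Omega$ are precisely the actions \equref{deduc}. The difference is not the route but the degree of completeness: the paper stops after defining the actions and cites \cite[Theorem 7.2.3]{majid2} for the verification of the matched pair axioms and of the isomorphism, and \cite[Proposition 2.2]{burCentral} for the normal/semidirect statement, whereas you actually carry out these verifications. Your organizing device --- $\phi$ is a coalgebra map because the multiplication of a bialgebra is one, hence $\Omega$ is a coalgebra map determined by its two legs, and the symmetry of those legs gives \equref{mp4} --- is exactly the content of the paper's \leref{coalgmap}, which the paper proves only later as the technical input to \thref{toatemorf}; so you are in effect anticipating that lemma. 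Your derivation of \equref{mp2}, \equref{mp3} and the two module axioms from associativity in $E$ followed by the counit projections is the standard argument and checks out, as does the observation that the straightening formula then makes $\phi$ multiplicative for \equref{0010}. The direct treatment of the ``furthermore'' also works: with $\triangleleft$ trivial, \equref{smash1} holds automatically (it is \equref{mp4} for trivial $\triangleleft$) and yields both normality conditions as you indicate, and conversely normality applied to $x=j(h)$ gives $(h_{(1)}\triangleleft a)\,S_H(h_{(2)})=\varepsilon_A(a)\varepsilon_H(h)1_H$, which after convolving with ${\rm Id}_H$ forces $h\triangleleft a=\varepsilon_A(a)h$. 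One point you should make explicit: normality is a condition on all $x\in E$, while your computations treat $x\in j(H)$; since both normality conditions are linear in $x$ and $E$ is spanned by the products $i(b)j(h)$, it suffices to add that multiplying on the left by $i(b_{(1)})$ and on the right by $S_E(i(b_{(2)}))$ (respectively on the left by $S_E(i(b_{(1)}))$ and on the right by $i(b_{(2)})$) stays inside $i(A)$ because $i(A)$ is a Hopf subalgebra, which reduces the general case to the one you computed. With that sentence added, your argument is a complete, self-contained version of what the paper outsources to the literature.
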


\begin{proof} The bicrossed product $A \bowtie H$ factorizes
through $A$ and $H$ via the canonical maps $i_A : A \to A \bowtie
H$, $i_A (a) = a \bowtie 1_H$ and $i_H : H \to A \bowtie H$, $i_H
(h) = 1_ A \bowtie h$. Conversely, assume that $E$ factorizes
through $A$ and $H$ and we view $A$ and $H$ as Hopf subalgebras of
$E$ via the identification $A \cong i(A)$ and $H \cong j(H)$. For
any $a\in A$ and $h\in H$ we view $ha \in E$; as $E$ factorizes
through $A$ and $H$, we can find an unique element $\sum_{i=1}^t
a_i^{(a, h)} \ot h_i^{(a, h)} \in A \ot H$ such that
$$
ha = \sum_{i=1}^t \, a_i^{(a, h)} \,\, h_i^{(a, h)}
$$
We define the maps $\triangleleft : H \otimes A \rightarrow H$ and
$\triangleright: H \otimes A \rightarrow A$ via:
\begin{equation}\eqlabel{deduc}
h \triangleleft a := \sum_{i=1}^t \, \varepsilon_A (a_i^{(a, h)})
\, h_i^{(a, h)},  \, \quad \, h \triangleright a : = \sum_{i=1}^t
\, \varepsilon_H(h_i^{(a, h)}) \, a_i^{(a, h)}
\end{equation}
Then $(A, H, \triangleleft, \triangleright)$ is a matched pair of
Hopf algebras and the multiplication map $ A\bowtie H \to E$,
$a\bowtie h \mapsto ah$ is an isomorphism of Hopf algebras. The
details are proven in \cite[Theorem 7.2.3]{majid2}. The final
statement is \cite[Proposition 2.2]{burCentral}.
\end{proof}

\thref{carMaj} proves that the factorization problem for Hopf
algebras can be restated in a computational manner: \textit{Let
$A$ and $H$ be two given Hopf algebras. Describe the set of all
matched pairs $(A, H, \triangleright, \triangleleft)$ and classify
up to an isomorphism all bicrossed products $A \bowtie \, H$.}

The Hopf algebras $A$ and $H$ play a symmetric role in a bicrossed
product $A \bowtie H$. In other words, finding all matched pairs
$(H, A, \triangleright', \triangleleft')$ reduces in fact to
finding all matched pairs $(A, H, \triangleright, \triangleleft)$.
In the case where both Hopf algebras $A$ and $H$ have bijective
antipode we indicate an explicit way of constructing a matched
pair $(H, A, \triangleright', \triangleleft')$ out of a given
matched pair $(A, H, \triangleright, \triangleleft)$ such that the
associated Hopf algebras $A \bowtie H$ and $H\bowtie ' A$ are
isomorphic. It is the Hopf algebra version of \cite[Proposition
2.5]{acim} proved for matched pairs of groups.

\begin{proposition}\prlabel{trecere}
Let $(A, H, \triangleright, \triangleleft)$ be a matched pair of
Hopf algebras with bijective antipodes. We define the actions
$\triangleright': A \ot H \to H$ and $\triangleleft' : A\ot H \to
A$ by
\begin{eqnarray*}
a \triangleright' h &:=& S_H \Bigl( S_H^{-1} (h_{(1)})
\triangleleft S_A^{-1} (a_{(1)}) \Bigl) \, \triangleleft \,
S_A\Bigl( S_H^{-1} (h_{(2)}) \triangleright
S_A^{-1} (a_{(2)}) \Bigl)\\
a \triangleleft' h &:=& S_H \Bigl( S_H^{-1} (h_{(1)})
\triangleleft S_A^{-1} (a_{(1)}) \Bigl) \, \triangleright \,
S_A\Bigl( S_H^{-1} (h_{(2)}) \triangleright S_A^{-1} (a_{(2)})
\Bigl)
\end{eqnarray*}
for all $a\in A$ and $h\in H$. Then $(H, A, \triangleright',
\triangleleft')$ is a matched pair of Hopf algebras and there
exists an isomorphism of Hopf algebras $A \bowtie H \cong H
\bowtie' A$, where $H \bowtie' A$ is the bicrossed product
associated to $(H, A, \triangleright', \triangleleft')$.
\end{proposition}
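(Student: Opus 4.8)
The plan is to avoid checking the matched pair axioms \equref{mp1}--\equref{mp4} directly and instead to deduce everything from the factorization characterization of \thref{carMaj}. Set $E := A \bowtie H$. By \thref{carMaj}, $E$ factorizes through $A$ and $H$ via the canonical Hopf algebra embeddings $i_A(a) = a \bowtie 1_H$, $i_H(h) = 1_A \bowtie h$, so the multiplication map $m_{A,H}\colon A \ot H \to E$, $a \ot h \mapsto i_A(a)\,i_H(h)$ is bijective. The crucial point is that, once both antipodes are bijective, $E$ \emph{also} factorizes through $H$ and $A$ in the reversed order; a second application of \thref{carMaj}, this time to the ordered pair $(H,A)$, then furnishes a matched pair $(H,A,\triangleright',\triangleleft')$ together with a Hopf algebra isomorphism $E \cong H \bowtie' A$. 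Composing with $E = A \bowtie H$ gives the asserted isomorphism $A \bowtie H \cong H \bowtie' A$, and the matched pair axioms for $(H,A,\triangleright',\triangleleft')$ hold automatically, precisely because the triple arises from a genuine factorization.

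First I would verify that bijectivity of $S_A$ and $S_H$ forces the antipode $S_E$ of $E = A \bowtie H$ to be bijective: from \equref{antipbic} one writes down a candidate two-sided inverse $\overline{S}$ built out of $S_A^{-1}$ and $S_H^{-1}$ and checks $\overline{S}\,S_E = S_E\,\overline{S} = \Id$. Since $i_A, i_H$ are Hopf algebra maps we have $S_E|_A = S_A$ and $S_E|_H = S_H$, whence $S_E^{-1}|_A = S_A^{-1}$ and $S_E^{-1}|_H = S_H^{-1}$. As $S_E$ is an anti-homomorphism, for $a\in A$ and $h\in H$,
\[ S_E\bigl(m_{A,H}(a\ot h)\bigr) = S_E(h)\,S_E(a) = m_{H,A}\bigl(S_H(h)\ot S_A(a)\bigr), \]
so $S_E\circ m_{A,H} = m_{H,A}\circ \tau\circ (S_A\ot S_H)$, where $\tau$ is the flip and $m_{H,A}\colon H\ot A\to E$ is the reversed multiplication. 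Each of $S_E$, $m_{A,H}$, $\tau$, $S_A\ot S_H$ is bijective, hence $m_{H,A}$ is bijective; that is, $E$ factorizes through $H$ and $A$, which is what the previous paragraph needs.

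Finally, to pin down the displayed formulas for $\triangleright'$ and $\triangleleft'$ I would feed this reversed factorization into the recipe \equref{deduc}. Computing the $H\bowtie' A$--normal form of the product $a\,h$ in $E$ amounts to solving $a\,h = \sum_i (1_A \bowtie h_i)(a_i \bowtie 1_H)$; applying $S_E^{-1}$ to the identity $S_E(a\,h) = \bigl(S_H(h_{(2)}) \triangleright S_A(a_{(2)})\bigr) \bowtie \bigl(S_H(h_{(1)}) \triangleleft S_A(a_{(1)})\bigr)$ supplied by \equref{antipbic} straightens $a\,h$ into an element of $H\cdot A$, after which \equref{deduc} reads off $a \triangleright' h$ and $a \triangleleft' h$. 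The actions so produced automatically satisfy all compatibilities, so the only remaining task is cosmetic. The main obstacle, and the sole genuinely computational step, is exactly this last straightening: manipulating the result in Sweedler notation, using that $S_A, S_H$ are anti-coalgebra maps together with the counit relations \equref{6}--\equref{8}, to massage the output into the stated closed form. This is the step where bijectivity of the antipodes is indispensable, and it is the Hopf-algebraic transcription of the group-level computation of \cite[Proposition 2.5]{acim}.
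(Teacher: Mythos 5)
Your strategy coincides with the paper's own proof: realize $E := A \bowtie H$, observe that $E$ also factorizes through $H$ and $A$ in the reversed order, invoke \thref{carMaj} a second time so that the matched pair axioms for $(H,A,\triangleright',\triangleleft')$ and the isomorphism $E \cong H \bowtie' A$ come for free, and identify the actions via \equref{deduc} after straightening $ah$ by means of the antipode. You are in fact more explicit than the paper on the reversal step (the paper writes only that $E$ ``also factorizes through $H$ and $A$''), but two of your steps are less independent than you present them: with the natural candidate $\overline{S}(c \bowtie g) := (1_A \bowtie S_H^{-1}(g))\cdot(S_A^{-1}(c) \bowtie 1_H)$, the verification $\overline{S}\circ S_E = \Id$ \emph{is} the straightening identity, word for word. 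Also beware a circularity in your last step: if you apply $S_E^{-1}$ to $S_E(ah)$ and distribute it as an anti-homomorphism over $S_E(ah) = S_E(h)S_E(a)$, you recover only the tautology $ah = ah$; the content appears when you first expand $S_E(a \bowtie h)$ into its $A \ot H$ normal form by \equref{antipbic} and only then distribute $S_E^{-1}$ over $c \bowtie g = (c \bowtie 1_H)(1_A \bowtie g)$.

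The one step that genuinely fails is the promise to ``massage the output into the stated closed form''. Executed correctly, the straightening gives
\begin{eqnarray*}
a \bowtie h &=& \bigl(1_A \bowtie S_H^{-1}\bigl(S_H(h_{(1)}) \triangleleft S_A(a_{(1)})\bigr)\bigr) \cdot \bigl(S_A^{-1}\bigl(S_H(h_{(2)}) \triangleright S_A(a_{(2)})\bigr) \bowtie 1_H\bigr)
\end{eqnarray*}
so \equref{deduc} yields $a \triangleright' h = S_H^{-1}(S_H(h) \triangleleft S_A(a))$ and $a \triangleleft' h = S_A^{-1}(S_H(h) \triangleright S_A(a))$ (equivalently $S_H(S_H^{-1}(h) \triangleleft S_A^{-1}(a))$ and $S_A(S_H^{-1}(h) \triangleright S_A^{-1}(a))$, by uniqueness of the $H \ot A$ decomposition): there is \emph{no} second application of $\triangleleft$ or $\triangleright$. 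The formulas displayed in the statement, in which the $H$-term is acted once more by the $A$-term, cannot be reached; indeed, applying $\varepsilon_A \ot {\rm Id}$, respectively ${\rm Id} \ot \varepsilon_H$, to the straightening identity shows that those displayed right-hand sides are identically $\varepsilon_A(a)h$ and $\varepsilon_H(h)a$, i.e. the trivial actions, and the proposition would then assert $A \bowtie H \cong H \ot A$, which is false in general. The group case already detects this: for the exact factorization $S_3 = C_2 C_3$, where $x \triangleleft a = x^{-1}$ and $\triangleright$ is trivial, the correct reversed action is $a \triangleright' h = (h^{-1} \triangleleft a^{-1})^{-1} = h^{-1}$, while the displayed formula evaluates to $h$. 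So your method is sound and produces the correct reversed matched pair --- the Hopf transcription of the group formulas $a \triangleright' h = (h^{-1}\triangleleft a^{-1})^{-1}$, $a \triangleleft' h = (h^{-1}\triangleright a^{-1})^{-1}$ of \cite[Proposition 2.5]{acim} --- but your final step should end by recording these formulas and flagging the discrepancy with the statement, not by trying to reach the displayed ones; the same inconsistency is present in the paper's own proof, whose displayed identity for $a' \bowtie h'$ is correct but whose reading of \equref{deduc} does not produce the actions stated in the proposition.
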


\begin{proof} The proof is a straightforward verification
based on \thref{carMaj} as follows: let $E:= A\bowtie H$ be the
bicrossed product associated to the matched pair $(A, H,
\triangleright, \triangleleft)$. Then $E$ factorizes through $A$
and $H$; hence $E$ also factorizes through $H$ and $A$. Next we
write down the cross relation in the bicrossed product $A\bowtie
H$
$$
(1_A \bowtie h) \cdot (a \bowtie 1_H) = h_{(1)} \triangleright
a_{(1)} \bowtie h_{(2)} \triangleleft a_{(2)}
$$
for $a = S_A^{-1} (a')$ and $ h = S_H^{-1} (h')$, with $a'\in A$
and $h'\in H$ and then we apply $S_{A\bowtie H}$ to it. We shall
obtain an expression for $ a' \bowtie h'$ in terms of the actions
$\triangleright$ and $\triangleleft$ as follows:
\begin{eqnarray*}
&& a' \bowtie h' = S_H \Bigl( S_H^{-1} (h'_{(1)}) \triangleleft
S_A^{-1} (a'_{(1)}) \Bigl) \, \triangleright \,  S_A \Bigl(
S_H^{-1} (h'_{(3)}) \triangleright
S_A^{-1} (a'_{(3)}) \Bigl) \, \bowtie \\
&\bowtie& S_H \Bigl( S_H^{-1} (h'_{(2)}) \triangleleft  S_A^{-1}
(a'_{(2)}) \Bigl) \, \triangleleft \, S_A \Bigl( S_H^{-1}
(h'_{(4)}) \triangleright S_A^{-1} (a'_{(4)}) \Bigl)
\end{eqnarray*}
Now, the actions $\triangleright'$ si $\triangleleft'$ of the new
matched pair $(H, A, \triangleright', \triangleleft')$ are
obtained using \equref{deduc}, i.e. we first apply $\varepsilon_A
\ot {\rm Id}$ and then ${\rm Id} \ot \varepsilon_H$ to this
formula. Finally, the k-linear map $\varphi : A\ot H \to H\ot A$,
$\varphi (a \ot h): = S_H (h) \ot S_A (a)$ is bijective and we can
easily prove that $\psi : A \bowtie H \to H \bowtie' A$, $\psi :=
\varphi \circ S_{A\bowtie H} $ is an isomorphism of Hopf algebras.
\end{proof}

\section{The morphisms between two bicrossed products}\selabel{sectth}

In order to describe the morphisms between two arbitrary bicrossed
products $A \bowtie H$ and $A' \bowtie ' H'$ we need the
following:

\begin{lemma}\lelabel{coalgmap}
Let $C$, $D$ and $E$ be three coalgebras. Then there exists a
bijective correspondence between the set of all morphisms of
coalgebras $\alpha : C \to D \ot E$ and the set of all pairs $(u,
p)$, where $u : C \to D$ and $p : C \to E$ are morphisms of
coalgebras satisfying the symmetry condition
\begin{equation}\eqlabel{symcoal}
p (c_{(1)}) \ot  u (c_{(2)}) = p (c_{(2)}) \ot u (c_{(1)})
\end{equation}
for all $c \in C$. Under the above correspondence the morphism of
coalgebras $\alpha : C \to D \ot E$ corresponding to the pair $(u,
p)$ is given by:
\begin{equation}\eqlabel{coalgmaps}
\alpha (c) =  u (c_{(1)}) \ot p (c_{(2)})
\end{equation}
for all $c \in C$.
\end{lemma}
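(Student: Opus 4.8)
The plan is to produce explicit maps in both directions and check that they are mutually inverse, with the symmetry condition \equref{symcoal} being exactly what is required to make the formula \equref{coalgmaps} comultiplicative. Recall that $D \ot E$ carries its standard tensor coalgebra structure, with comultiplication $d \ot e \mapsto (d_{(1)} \ot e_{(1)}) \ot (d_{(2)} \ot e_{(2)})$ and counit $\varepsilon_D \ot \varepsilon_E$; consequently the two projections ${\rm Id}_D \ot \varepsilon_E : D \ot E \to D$ and $\varepsilon_D \ot {\rm Id}_E : D \ot E \to E$ are morphisms of coalgebras. Given a morphism of coalgebras $\alpha : C \to D \ot E$, I would define $u := ({\rm Id}_D \ot \varepsilon_E) \circ \alpha$ and $p := (\varepsilon_D \ot {\rm Id}_E) \circ \alpha$, which are morphisms of coalgebras as composites of such.

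To recover \equref{coalgmaps} and derive the symmetry condition, the key device is to apply the comultiplicativity identity $\Delta_{D \ot E}(\alpha(c)) = \alpha(c_{(1)}) \ot \alpha(c_{(2)})$ and then collapse two of the four resulting tensor legs by counits. Writing $\alpha(c) = \sum_i d_i \ot e_i$, applying ${\rm Id}_D \ot \varepsilon_E \ot \varepsilon_D \ot {\rm Id}_E$ to that identity and using the counit axioms yields $u(c_{(1)}) \ot p(c_{(2)}) = \alpha(c)$, which is precisely \equref{coalgmaps}. Applying instead $\varepsilon_D \ot {\rm Id}_E \ot {\rm Id}_D \ot \varepsilon_E$ yields $p(c_{(1)}) \ot u(c_{(2)}) = \tau(\alpha(c))$, where $\tau : D \ot E \to E \ot D$ is the flip. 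Since \equref{coalgmaps} also gives $\tau(\alpha(c)) = p(c_{(2)}) \ot u(c_{(1)})$, these two computations together force \equref{symcoal}.

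Conversely, given morphisms of coalgebras $u$ and $p$ satisfying \equref{symcoal}, I would define $\alpha$ by \equref{coalgmaps} and verify it is a morphism of coalgebras. The counit condition $\varepsilon_{D \ot E}(\alpha(c)) = \varepsilon_C(c)$ is immediate from $\varepsilon_D \circ u = \varepsilon_C$ and $\varepsilon_E \circ p = \varepsilon_C$. The comultiplicativity is the one genuine obstacle: expanding $\Delta_{D \ot E}(\alpha(c))$ using that $u$ and $p$ are comultiplicative gives $u(c_{(1)}) \ot p(c_{(3)}) \ot u(c_{(2)}) \ot p(c_{(4)})$, whereas $(\alpha \ot \alpha)\Delta_C(c)$ gives $u(c_{(1)}) \ot p(c_{(2)}) \ot u(c_{(3)}) \ot p(c_{(4)})$; these agree exactly when the two middle legs may be interchanged, which is \equref{symcoal} applied to the middle Sweedler component (after invoking coassociativity to regroup). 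Finally, that the two assignments are mutually inverse is routine: in the round trip $(u,p) \mapsto \alpha \mapsto (u',p')$ the counit projections recover $u' = u$ and $p' = p$ via $\varepsilon_E \circ p = \varepsilon_C$ and $\varepsilon_D \circ u = \varepsilon_C$, while the reverse round trip is exactly \equref{coalgmaps} established above. The only real subtlety throughout is the bookkeeping of Sweedler indices when collapsing legs; the symmetry condition enters only, but decisively, in the comultiplicativity of $\alpha$.
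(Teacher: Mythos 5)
Your proof is correct and follows essentially the same route as the paper: define $u = (\mathrm{Id}_D \ot \varepsilon_E)\circ\alpha$ and $p = (\varepsilon_D \ot \mathrm{Id}_E)\circ\alpha$, recover \equref{coalgmaps} and \equref{symcoal} by collapsing legs of the comultiplicativity identity with counits, and check the converse and the round trips directly. The only (cosmetic) difference is that the paper obtains \equref{symcoal} by first substituting \equref{coalgmaps} back into the four-leg identity and then applying $\varepsilon_D \ot \mathrm{Id} \ot \mathrm{Id} \ot \varepsilon_E$, whereas you collapse the raw identity with $\varepsilon_D \ot \mathrm{Id}_E \ot \mathrm{Id}_D \ot \varepsilon_E$ and compare with the flip of \equref{coalgmaps}; you also spell out the converse verification that the paper leaves as "easy to see."
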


\begin{proof}
Let $\alpha : C \to D \ot E$ be a morphism of coalgebras. We adopt
the temporary notation $\alpha (c) = c_{ \{0\}} \ot c_{ \{1\}} \in
D \ot E$ (summation understood). Define the following two linear
maps:
\begin{eqnarray*}
u: C \rightarrow D, \quad u := ({\rm Id}\ot \varepsilon_E) \circ
\alpha, \,\,\,\,\, {\rm i.e. } \,\, \quad u (c) = \varepsilon_E (c_{ \{1\}}) c_{ \{0\}}\\
p: C \rightarrow E, \quad p := (\varepsilon_D \ot {\rm Id}) \circ
\alpha, \,\,\,\,\, {\rm i.e. } \,\, \quad p (c) = \varepsilon_D
(c_{ \{0\}}) c_{ \{1\}}
\end{eqnarray*}
for all $c\in C$. Then $u$ and $p$ are coalgebra maps as
compositions of coalgebra maps. Since $\alpha$ is a coalgebra map
we have $\Delta_{D\ot E} \circ \alpha(c) = (\alpha \ot \alpha)
\circ \Delta_C(c)$ for any $c\in C$, which is equivalent to:
\begin{equation}\eqlabel{coalgmaps22}
c_{\{0\}(1)} \ot c_{\{1\}(1)} \ot c_{\{0\}(2)} \ot c_{\{1\}(2)} =
c_{ (1)\{0\}} \ot c_{ (1)\{1\}} \ot c_{ (2)\{0\}} \ot c_{
(2)\{1\}}
\end{equation}
If we apply ${\rm Id} \ot \varepsilon_E \ot \varepsilon_D \ot {\rm
Id}$ to \equref{coalgmaps22} we obtain $c_{ \{0\}} \ot c_{ \{1\}}
= u (c_{(1)}) \ot p (c_{(2)})$ that is \equref{coalgmaps} holds.
With this form for $\alpha$ and having in mind that $u$ and $p$
are coalgebra maps the equation \equref{coalgmaps22} takes the
form:
$$
u (c_{(1)}) \ot p (c_{(3)}) \ot u (c_{(2)}) \ot p (c_{(4)}) = u
(c_{(1)}) \ot p (c_{(2)}) \ot u (c_{(3)}) \ot p (c_{(4)})
$$
Applying $\varepsilon_D \ot {\rm Id} \ot {\rm Id} \ot
\varepsilon_E $ to the above identity we obtain the symmetry
condition \equref{symcoal}. Conversely, it is easy to see that any
map $\alpha$ given by \equref{coalgmaps}, for some coalgebra maps
$u$ and $p$ satisfying \equref{symcoal}, is a coalgebra map and it
follows from the proof that the correspondence is bijective.
\end{proof}

Now we can describe all Hopf algebra morphisms between two bicrossed
products.

\begin{theorem}\thlabel{toatemorf}
Let $(A, H, \triangleright, \triangleleft)$ and $(A', H',
\triangleright', \triangleleft')$ be two matched pairs of Hopf
algebras. Then there exists a bijective correspondence between the
set of all morphisms of Hopf algebras $\psi : A \bowtie H \to A'
\bowtie ' H' $ and the set of all quadruples $(u, p, r, v)$, where
$u: A \to A'$, $p: A \to H'$, $r: H \rightarrow A'$, $v: H
\rightarrow H'$ are unitary coalgebra maps satisfying the
following compatibility conditions:
\begin{eqnarray}
u(a_{(1)}) \ot p(a_{(2)}) &{=}& u(a_{(2)}) \ot p(a_{(1)})\eqlabel{C1}\\
r(h_{(1)}) \ot v(h_{(2)}) &{=}& r(h_{(2)}) \ot v(h_{(1)})\eqlabel{C2}\\
u(ab) &{=}& u(a_{(1)}) \, \bigl( p (a_{(2)}) \triangleright' u(b) \bigl)\eqlabel{C3}\\
p(ab) &{=}& \bigl( p (a) \triangleleft' u(b_{(1)}) \bigl) \, p (b_{(2)})\eqlabel{C4}\\
r(hg) &{=}& r(h_{(1)}) \, \bigl(v(h_{(2)}) \triangleright'
r(g)\bigl)\eqlabel{C5}\\
v(hg) &{=}& \bigl(v(h) \triangleleft' r(g_{(1)})\bigl) \,
v(g_{(2)})\eqlabel{C6}\\
r(h_{(1)}) \bigl(v(h_{(2)}) \triangleright' u(b) \bigl) &{=}& u
(h_{(1)} \triangleright b_{(1)}) \, \Bigl( p (h_{(2)}
\triangleright b_{(2)}) \triangleright' r(h_{(3)} \triangleleft
b_{(3)})
\Bigl) \eqlabel{C7}\\
\bigl (v(h) \triangleleft' u(b_{(1)}) \bigl) \, p (b_{(2)}) &{=}&
\Bigl( p (h_{(1)} \triangleright b_{(1)}) \triangleleft '
r(h_{(2)} \triangleleft b_{(2)}) \Bigl) \, v (h_{(3)}
\triangleleft b_{(3)}) \eqlabel{C8}
\end{eqnarray}
for all $a$, $b \in A$, $g$, $h \in H$.

Under the above correspondence the morphism of Hopf algebras
$\psi: A \bowtie H \to A' \bowtie ' H' $ corresponding to $(u, p,
r, v)$ is given by:
\begin{equation}\eqlabel{morfbicros}
\psi(a \bowtie h) = u(a_{(1)}) \, \bigl( p(a_{(2)})
\triangleright' r(h_{(1)}) \bigl) \,\, \bowtie' \, \bigl(
p(a_{(3)}) \triangleleft' r(h_{(2)}) \bigl) \, v(h_{(3)})
\end{equation}
for all $a \in A$ and $h\in H$.
\end{theorem}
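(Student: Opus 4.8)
The plan is to exploit the fact that $A\bowtie H$ is generated as an algebra by its two canonical Hopf subalgebras $i_A(A)=A\bowtie 1_H$ and $i_H(H)=1_A\bowtie H$, so that a Hopf algebra map $\psi$ is completely determined by the two restrictions $\psi_A:=\psi\circ i_A:A\to A'\bowtie' H'$ and $\psi_H:=\psi\circ i_H:H\to A'\bowtie' H'$. Since $(a\bowtie 1_H)(1_A\bowtie h)=a\bowtie h$ by \equref{0010}, multiplicativity of $\psi$ forces $\psi(a\bowtie h)=\psi_A(a)\,\psi_H(h)$, the product being taken in $A'\bowtie' H'$. The quadruple is then recovered by composing with the projections onto $A'$ and $H'$: by \leref{coalgmap} the coalgebra map $\psi_A$ corresponds to a pair $(u,p)$ with $u:A\to A'$, $p:A\to H'$, and $\psi_H$ to a pair $(r,v)$ with $r:H\to A'$, $v:H\to H'$; writing out the product $\psi_A(a)\psi_H(h)$ via \equref{0010} then produces \equref{morfbicros}.

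For the necessity of the eight conditions I would argue as follows. \leref{coalgmap} applied to $\psi_A$ and $\psi_H$ yields at once the symmetry conditions \equref{C1} and \equref{C2}, while $\psi(1_A\bowtie 1_H)=1_{A'}\bowtie' 1_{H'}$ gives unitality of $u,p,r,v$. Next, $\psi_A$ and $\psi_H$ are algebra maps, being composites of the algebra maps $\psi$, $i_A$, $i_H$; expanding $\psi_A(ab)=\psi_A(a)\psi_A(b)$ in $A'\bowtie' H'$ and applying $\Id\ot\varepsilon_{H'}$ and $\varepsilon_{A'}\ot\Id$ produces \equref{C3} and \equref{C4}, and the same computation for $\psi_H$ produces \equref{C5} and \equref{C6}. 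Finally, \equref{C7} and \equref{C8} encode that $\psi$ respects the straightening (cross) relation: since $(1_A\bowtie h)(a\bowtie 1_H)=h_{(1)}\triangleright a_{(1)}\bowtie h_{(2)}\triangleleft a_{(2)}$ in $A\bowtie H$, multiplicativity of $\psi$ gives $\psi_A(h_{(1)}\triangleright a_{(1)})\,\psi_H(h_{(2)}\triangleleft a_{(2)})=\psi_H(h)\,\psi_A(a)$, and projecting this identity by $\Id\ot\varepsilon_{H'}$ and $\varepsilon_{A'}\ot\Id$ (using \equref{6} and \equref{8} to carry the counits through $\triangleright$ and $\triangleleft$) yields \equref{C7} and \equref{C8} respectively.

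For the converse I would define $\psi$ by \equref{morfbicros}, equivalently $\psi(a\bowtie h)=\psi_A(a)\psi_H(h)$ with $\psi_A(a)=u(a_{(1)})\bowtie' p(a_{(2)})$ and $\psi_H(h)=r(h_{(1)})\bowtie' v(h_{(2)})$. That $\psi$ is a coalgebra map is immediate: by \leref{coalgmap} together with \equref{C1} and \equref{C2} the maps $\psi_A,\psi_H$ are coalgebra maps, hence so is $\psi=m_{A'\bowtie' H'}\circ(\psi_A\ot\psi_H)$, multiplication in a bialgebra being a coalgebra map; unitality is clear. To see that $\psi$ is multiplicative I would first reduce to generators: since $A\bowtie H$ is spanned by the elements $a\bowtie h=i_A(a)i_H(h)$, straightening a general product by \equref{0010} shows that $\psi\bigl((a\bowtie h)(b\bowtie g)\bigr)=\psi(a\bowtie h)\psi(b\bowtie g)$ follows formally once we know (i) $\psi_A$ is an algebra map, (ii) $\psi_H$ is an algebra map, and (iii) the cross relation $\psi_A(h_{(1)}\triangleright b_{(1)})\,\psi_H(h_{(2)}\triangleleft b_{(2)})=\psi_H(h)\,\psi_A(b)$. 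A direct Sweedler substitution shows that \equref{C3}--\equref{C4} give (i) and \equref{C5}--\equref{C6} give (ii): inserting \equref{C3} into the $A'$-leg and \equref{C4} into the $H'$-leg of $\psi_A(ab)=u((ab)_{(1)})\bowtie' p((ab)_{(2)})$ reproduces exactly the product $\psi_A(a)\psi_A(b)$ computed through \equref{0010}.

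The heart of the argument, and the step I expect to be the main obstacle, is deriving the cross relation (iii) from \equref{C7} and \equref{C8}. These are only the $A'$- and $H'$-components of (iii), and since a tensor is not determined by its two marginals the recombination is genuinely delicate. The plan is to expand the left-hand side $\psi\bigl(h_{(1)}\triangleright a_{(1)}\bowtie h_{(2)}\triangleleft a_{(2)}\bigr)$ through \equref{morfbicros}, using \equref{6} and \equref{8} to distribute the comultiplications over the actions $\triangleright$ and $\triangleleft$, and then to reorganize the entangled Sweedler indices by means of the matched-pair axioms \equref{mp2}, \equref{mp3} and above all the symmetry \equref{mp4} of $(A,H)$, so that \equref{C7} can be applied to the $A'$-leg and \equref{C8} to the $H'$-leg simultaneously; the corresponding axioms for $(A',H')$ then collapse the result to $\psi_H(h)\psi_A(a)$. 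Once multiplicativity is established, $\psi$ is a bialgebra map between Hopf algebras and so automatically commutes with the antipodes, hence is a Hopf algebra map. Finally, the two assignments $\psi\mapsto(u,p,r,v)$ and $(u,p,r,v)\mapsto\psi$ are mutually inverse by construction---restricting the $\psi$ built from \equref{morfbicros} along $i_A,i_H$ and projecting returns $(u,p,r,v)$, and conversely---which gives the asserted bijection.
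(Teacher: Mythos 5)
Your proposal is correct and follows essentially the same route as the paper's own proof: restrict $\psi$ along $i_A$ and $i_H$, apply \leref{coalgmap} to each restriction to obtain $(u,p)$ and $(r,v)$ together with \equref{C1}--\equref{C2}, characterize multiplicativity of $\psi$ as ``$\psi_A,\psi_H$ are algebra maps plus the cross relation,'' and split each of these conditions into its $A'$- and $H'$-components by applying $\Id\ot\varepsilon_{H'}$ and $\varepsilon_{A'}\ot\Id$. The one step you flag as the main obstacle---recovering the cross relation from \equref{C7} and \equref{C8}---is actually routine rather than delicate: both sides of the cross relation have the split form $F(h_{(1)}\ot b_{(1)})\ot G(h_{(2)}\ot b_{(2)})$ for linear maps $F:H\ot A\to A'$ and $G:H\ot A\to H'$, and \equref{C7}, \equref{C8} assert precisely the equalities $F=F'$ and $G=G'$ of linear maps, so they can be substituted leg-wise (with one use of \equref{mp4} to normalize the Sweedler indices); no reconstruction of a tensor from its two marginals is ever needed, which is why the paper dismisses this direction as straightforward.
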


\begin{proof}
Let $\psi: A \bowtie H \to A' \bowtie ' H' $ be a morphism of Hopf
algebras. We define
$$
\alpha : A \to A' \bowtie ' H', \quad \alpha (a) := \psi (a
\bowtie 1_H)
$$
$$
\beta : H \to A' \bowtie ' H', \quad \beta (h) := \psi (1_A
\bowtie h)
$$
Then $\alpha : A \to A' \ot  H'$ and $\beta : H \to A' \ot H'$ are
unitary morphisms of coalgebras as compositions of such maps (we
recall that the coalgebra structure on $A' \bowtie ' H'$ is the
tensor product of coalgebras $A' \ot  H'$) and
\begin{equation}\eqlabel{ecu3}
\psi (a \bowtie h) = \psi ( (a \bowtie 1_H) (1_A \bowtie h) ) =
\psi (a \bowtie 1_H) \psi (1_A \bowtie h) = \alpha (a) \, \beta(h)
\end{equation}
for all $a\in A$ and $h\in H$. It follows from \leref{coalgmap}
applied to $\alpha$ and $\beta$ that there exist four coalgebra
maps $u: A \to A'$, $p: A \to H'$, $r: H \rightarrow A'$, $v: H
\rightarrow H'$ such that
\begin{equation}\eqlabel{ecu33}
\alpha (a) =  u (a_{(1)}) \ot p (a_{(2)}), \qquad \beta (h) = r
(h_{(1)}) \ot v (h_{(2)})
\end{equation}
and the pairs $(u, p)$ and $(r, v)$ satisfy the symmetry
conditions \equref{C1} and \equref{C2}. Explicitly $u$, $p$, $r$
and $v$ are defined by
$$
u(a) = (({\rm Id}\ot \varepsilon_{H'}) \circ \psi ) (a \bowtie
1_H), \quad p(a) = ((\varepsilon_{A'} \ot {\rm Id}) \circ \psi )
(a \bowtie 1_H)
$$
$$
r(h) =  (({\rm Id}\ot \varepsilon_{H'}) \circ \psi ) (1_A \bowtie
h), \quad v(h) = ((\varepsilon_{A'} \ot {\rm Id}) \circ \psi )
(1_A \bowtie h)
$$
for all $a\in A$ and $h\in H$. All these maps are unitary
coalgebra maps. Now, for any $a\in A$ and $h\in H$ we have:
\begin{eqnarray*}
\psi (a \bowtie h) &=& \alpha (a) \beta(h)\\
&=& \bigl( u (a_{(1)}) \bowtie' p (a_{(2)}) \bigl) \cdot \bigl( r
(h_{(1)}) \bowtie' v (h_{(2)}) \bigl)\\
&=& u(a_{(1)}) \, \bigl( p(a_{(2)}) \triangleright' r(h_{(1)})
\bigl) \,\, \bowtie' \, \bigl( p(a_{(3)}) \triangleleft'
r(h_{(2)}) \bigl) \, v(h_{(3)})
\end{eqnarray*}
i.e. \equref{morfbicros} also holds. Thus any bialgebra map $\psi:
A \bowtie H \to A' \bowtie ' H' $ is uniquely determined by the
formula \equref{ecu3} for some unitary coalgebra maps $\alpha: A
\to A' \ot  H'$ and $\beta : H \to A' \ot  H'$ or, equivalently,
in the more explicit form given by \equref{morfbicros}, for some
unique quadruple of unitary coalgebra maps $(u, p, r, v)$.

Now, a map $\psi$ given by \equref{ecu3} is a morphism of algebras
if and only if $\alpha : A \to A' \bowtie ' H'$ and $\beta : H \to
A' \bowtie ' H'$ are algebra maps and the following commutativity
relation holds
\begin{equation}\eqlabel{ecu11}
\beta (h) \, \alpha (b) = \alpha (h_{(1)} \triangleright b_{(1)} )
\, \beta (h_{(2)} \triangleleft b_{(2)} )
\end{equation}
for all $h\in H$ and $b\in A$. Indeed, if $\psi$ is an algebra map
then $\alpha$ and $\beta$ are algebra maps as compositions of
algebra maps. On the other hand:
$$
\psi (a \bowtie h) \psi (b \bowtie g) = \alpha (a) \beta (h)
\alpha (b) \beta (g)
$$
and
$$
\psi \bigl( (a \bowtie h) (b \bowtie g) \bigl ) = \alpha (a)
\alpha (h_{(1)} \triangleright b_{(1)} ) \beta (h_{(2)}
\triangleleft b_{(2)} ) \beta (g)
$$
Hence, the relation \equref{ecu11} follows by taking $a = 1_A$ and
$g = 1_H$ in the identity above. Conversely is obvious. Now, we
write down the explicit conditions for $\alpha$ and $\beta$ to be
algebra maps. First, we prove that $\alpha : A \to A' \bowtie '
H'$, $\alpha (a) = u(a_{(1)}) \bowtie' p(a_{(2)})$ is an algebra
map if and only if \equref{C3} and \equref{C4} hold. Indeed,
$\alpha (ab) = \alpha (a) \alpha (b)$ is equivalent to:
\begin{equation}\eqlabel{a10}
u(a_{(1)} b_{(1)} ) \bowtie' p(a_{(2)} b_{(2)} ) = u(a_{(1)})
\bigl( p(a_{(2)}) \triangleright' u(b_{(1)}) \bigl) \, \bowtie' \,
\bigl( p(a_{(3)}) \triangleleft' u(b_{(2)}) \bigl) p(b_{(3)})
\end{equation}
If we apply ${\rm Id} \ot \varepsilon_{H'}$ to \equref{a10} we
obtain \equref{C3} while if we apply $ \varepsilon_{A'} \ot {\rm
Id}$ to \equref{a10} we obtain \equref{C4}. Conversely is obvious.
In a similar way we can show that $\beta : H \to A' \bowtie' H'$,
$\beta (h) = r (h_{(1)}) \bowtie' v (h_{(2)})$ is an algebra map
if and only if  \equref{C5} and \equref{C6} hold. Finally, we
prove that the commutativity relation \equref{ecu11} is equivalent
to \equref{C7} and \equref{C8}. Indeed, using the expressions of
 $\alpha$ and $\beta$ in terms of $(u, p)$
and respectively $(r, v)$, the equation \equref{ecu11} is
equivalent to:
\begin{eqnarray*}
&&r(h_{(1)}) \bigl(v(h_{(2)}) \triangleright' u(b_{(1)}) \bigl)
\,\,
\bowtie' \,\, \bigl (v(h_{(3)}) \triangleleft' u(b_{(2)}) \bigl) \, p (b_{(3)}) = \\
&& u (h_{(1)} \triangleright b_{(1)}) \, \Bigl( p (h_{(2)}
\triangleright b_{(2)}) \triangleright' r(h_{(3)} \triangleleft
b_{(3)})
\Bigl) \,\,\, \bowtie  \\
&& \Bigl( p (h_{(4)} \triangleright b_{(4)}) \triangleleft '
r(h_{(5)} \triangleleft b_{(5)}) \Bigl) \, v (h_{(6)}
\triangleleft b_{(6)})
\end{eqnarray*}
If  we apply ${\rm Id} \ot \varepsilon_{H'}$ to the above identity
we obtain \equref{C7} while if we apply $ \varepsilon_{A'} \ot
{\rm Id}$ to it we get \equref{C8}. Conversely, the commutativity
condition \equref{ecu11} follows straightforward from \equref{C7}
and \equref{C8}.

To conclude, we have proved that a bialgebra map $\psi: A \bowtie
H \to A' \bowtie ' H' $ is uniquely determined by a quadruple $(u,
p, r, v)$ of unitary coalgebra maps satisfying the compatibility
conditions \equref{C1}-\equref{C8} such that $\psi: A \bowtie H
\to A' \bowtie ' H' $ is given by \equref{morfbicros}.

Conversely, the fact that $\psi$ given by \equref{morfbicros} is a
morphism of bialgebras, for some $(u, p, r, v)$ satisfying
\equref{C1}-\equref{C8} is straightforward and follows directly
from the proof. The fact that $\psi$ is an algebra map is proven
above. $\psi$ is also a coalgebra map as a composition of
coalgebra maps. More precisely, the equation \equref{morfbicros}
can be written in the equivalent form \equref{ecu3} namely, $\psi
= m_{A' \bowtie ' H'} \circ (\alpha \ot \beta)$, where $m_{A'
\bowtie ' H'}$ is the multiplication map on the bicrossed product
$A' \bowtie ' H'$ and $\alpha$, $\beta$ are the unitary coalgebra
maps obtained from $(u, p, r, v)$ via the formulas \equref{ecu33}.
\end{proof}

In the next corollary $A\#H$ will be a semi-direct product of Hopf
algebras constructed in (1) of \exref{exempleban} as a special
case of a bicrossed product.

\begin{corollary}\colabel{endosmh}
Let $A\# H$ and $A' \#' H'$ be two semi-direct products of Hopf
algebras associated to two left actions $\triangleright : H\ot A
\to $ and $\triangleright' : H'\ot A' \to A'$. Then there exists a
bijection between the set of all morphisms of Hopf algebras $\psi
: A \# H \to A' \#' H'$ and the set of all quadruples $(u, p, r,
v)$, where $u: A \to A'$, $r: H \rightarrow A'$ are unitary
coalgebra maps, $p: A \to H'$, $v: H \rightarrow H'$ are morphism
of Hopf algebras satisfying the following compatibility
conditions:
\begin{eqnarray}
u(a_{(1)}) \ot p(a_{(2)}) &{=}& u(a_{(2)}) \ot p(a_{(1)})\eqlabel{C1ab}\\
r(h_{(1)}) \ot v(h_{(2)}) &{=}& r(h_{(2)}) \ot v(h_{(1)})\eqlabel{C2ab}\\
u(ab) &{=}& u(a_{(1)}) \, \bigl( p (a_{(2)}) \triangleright ' u(b) \bigl)\eqlabel{C3ab}\\
r(hg) &{=}& r(h_{(1)}) \, \bigl(v(h_{(2)}) \triangleright ' r(g)\bigl)\eqlabel{C5ab}\\
r(h_{(1)}) \bigl(v(h_{(2)}) \triangleright' u(b) \bigl) &{=}& u
(h_{(1)} \triangleright b_{(1)}) \, \Bigl( p (h_{(2)}
\triangleright b_{(2)}) \triangleright ' r(h_{(3)})
\Bigl) \eqlabel{C7ab}\\
v(h) \, p(b) &{=}& p (h_{(1)} \triangleright b ) \, v (h_{(2)})
\eqlabel{C8ab}
\end{eqnarray}
for all $a$, $b \in A$, $g$, $h \in H$.

Under the above bijection the morphism of Hopf algebras $\psi: A
\# H \to A' \#' H'$ corresponding to $(u, p, r, v)$ is given by:
\begin{equation}\eqlabel{morfbicrossmab}
\psi(a \# h) = u(a_{(1)}) \, \bigl( p(a_{(2)}) \triangleright'
r(h_{(1)}) \bigl) \,\, \#' \,  p(a_{(3)}) \, v(h_{(2)})
\end{equation}
for all $a \in A$ and $h\in H$.
\end{corollary}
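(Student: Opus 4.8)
The plan is to derive this corollary directly from \thref{toatemorf} by specializing to the situation that defines a semi-direct product, namely the case in which the right actions are trivial: $h \triangleleft a = \varepsilon_A(a)\,h$ in $A \# H$ and $x \triangleleft' a' = \varepsilon_{A'}(a')\,x$ in $A' \#' H'$, as recalled in (1) of \exref{exempleban}. All the work then consists of substituting these trivial actions into the compatibility conditions \equref{C1}--\equref{C8} and into the formula \equref{morfbicros}, and simplifying. The only tool needed beyond this substitution is that $u$ and $r$ are unitary coalgebra maps, so that $\varepsilon_{A'} \circ u = \varepsilon_A$ and $\varepsilon_{A'} \circ r = \varepsilon_H$; these identities let the counit factors produced by the trivial actions collapse the Sweedler indices.

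First I would note that \equref{C1}, \equref{C2}, \equref{C3} and \equref{C5} involve only the left actions $\triangleright$, $\triangleright'$, so they survive unchanged as \equref{C1ab}, \equref{C2ab}, \equref{C3ab} and \equref{C5ab}. The content lies in the remaining four. In \equref{C4}, the term $p(a) \triangleleft' u(b_{(1)})$ becomes $\varepsilon_{A'}(u(b_{(1)}))\,p(a) = \varepsilon_A(b_{(1)})\,p(a)$, and the counit collapse reduces the right-hand side to $p(a)\,p(b)$; thus \equref{C4} says exactly that $p$ is an algebra map, and being already a unitary coalgebra map it is a morphism of Hopf algebras. The same argument applied to \equref{C6} shows that $v$ is a morphism of Hopf algebras. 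In \equref{C7}, triviality of $\triangleleft$ turns $r(h_{(3)} \triangleleft b_{(3)})$ into $\varepsilon_A(b_{(3)})\,r(h_{(3)})$, and collapsing the index $b_{(3)}$ yields \equref{C7ab}. In \equref{C8}, the left-hand side reduces to $v(h)\,p(b)$ after replacing $v(h) \triangleleft' u(b_{(1)})$ by $\varepsilon_A(b_{(1)})\,v(h)$, while on the right-hand side triviality of both $\triangleleft$ and $\triangleleft'$, together with $\varepsilon_{A'} \circ r = \varepsilon_H$, collapses the expression to $p(h_{(1)} \triangleright b)\,v(h_{(2)})$, giving \equref{C8ab}.

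Finally I would specialize the morphism itself: in \equref{morfbicros} the factor $p(a_{(3)}) \triangleleft' r(h_{(2)})$ becomes $\varepsilon_{A'}(r(h_{(2)}))\,p(a_{(3)}) = \varepsilon_H(h_{(2)})\,p(a_{(3)})$, and the counit identity $\varepsilon_H(h_{(2)})\,v(h_{(3)}) = v(h_{(2)})$ collapses \equref{morfbicros} into \equref{morfbicrossmab}. Since \thref{toatemorf} already establishes that $\psi \mapsto (u,p,r,v)$ is a bijection and that \equref{morfbicros} recovers $\psi$, the bijectivity claimed here is inherited with no extra argument. There is no genuine obstacle: the whole proof is a careful bookkeeping of Sweedler indices through the counit collapses, the only mild subtlety being to track which leg of each three- or fourfold coproduct is annihilated by a counit so that \equref{C7ab}, \equref{C8ab} and \equref{morfbicrossmab} come out with exactly the indices stated.
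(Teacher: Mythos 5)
Your proposal is correct and follows exactly the paper's own proof: specialize \thref{toatemorf} to the case where both right actions $\triangleleft$, $\triangleleft'$ are trivial, observe that \equref{C4} and \equref{C6} then say precisely that $p$ and $v$ are algebra (hence Hopf algebra) maps, and let the counit collapses reduce the remaining conditions and \equref{morfbicros} to their stated simplified forms. The paper states these reductions without computation, while you carry out the Sweedler-index bookkeeping explicitly, but the argument is the same.
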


\begin{proof} We apply \thref{toatemorf} in the case that
the two right actions $\triangleleft'$,  $\triangleleft$ are both
the trivial actions. In this case, \equref{C4} is equivalent to
$p: A \to H'$ being an algebra map, i.e. $p$ is a Hopf algebra map
while \equref{C6} is equivalent to $v: H \to H'$ being an algebra
map, i.e. $v$ is a Hopf algebra map. Finally,
\equref{C1}-\equref{C3}, \equref{C5}, \equref{C7} and \equref{C8}
take the simplified forms \equref{C1ab}-\equref{C3ab},
\equref{C5ab}, \equref{C7ab} and \equref{C8ab} respectively.
\end{proof}

Using \thref{toatemorf} we can completely describe $\End (A
\bowtie H)$, the space of all Hopf algebra endomorphisms of an
arbitrary bicrossed product. In particular, we obtain a
description of $\End (D (H))$, the group of all Hopf algebra
endomorphisms of a Drinfel'd double $D(H)$. If $H = k[G]$, for a
finite group $G$, the group $\End (D(k[G]))$ of all Hopf algebra
endomorphisms is described bellow in full details.

Let $G$ be a finite group. The Drinfel'd double of $k[G]$ is
described as follows. Take $\{ e_g \}_{g\in G}$ to be a dual basis
for the basis $\{g \}_{g\in G}$ of $k[G]$. $A := ( k[G]^* )^{\rm
cop}$ is a Hopf algebra with the multiplication and the
comultiplication given by
$$
e_g \cdot e_h := \delta_{g, h} e_g, \quad \Delta_A (e_g) :=
\sum_{x\in G} \, e_x \ot e_{gx^{-1}}, \quad 1_A := \sum_{g\in G}
\, e_g, \quad \varepsilon_A (e_g) := \delta_{g, 1_G}
$$
for all $g$, $h\in G$, where $\delta_{( - , \,-)}$ is the
Kronecker delta. The left action of $H = k[G]$ on $A= ( k[G]^*
)^{\rm cop}$ given by \equref{dubluact} takes the form
\begin{equation}\eqlabel{actdudr}
g \triangleright e_h := e_{g h g^{-1}} \quad {\rm or} \quad (g
\triangleright f) (z) : = f (g^{-1} z g)
\end{equation}
for all $g$, $h$, $z\in G$ and $f \in (k[G]^*)^{\rm cop}$. Using
the action \equref{actdudr}, we have that $D(k[G]) = (k[G]^*)^{\rm
cop} \# k[G]$. Thus, $D(k[G])$ is the Hopf algebra having the
basis $\{ e_h \# g \}_{g, h\in G}$ with the multiplication and the
comultiplication given by
$$
(e_h \, \# \, g) \, \cdot \, (e_x \, \# \,  y) = \delta_{h, g x
g^{-1}} \,\, e_h \, \# \, g y, \qquad  \Delta (e_h \, \# \, g) =
\sum_{x\in G} \, \, (e_x \, \# \, g) \ot (e_{h x^{-1}} \, \# \, g)
$$
for all $h$, $g$, $x$ and $y\in G$.

The following gives the parametrization of all Hopf algebra
morphisms between two Drinfel'd doubles associated to two finite
groups $G$ and $H$. In particular, if $G = H$, the space $\End
\bigl(D(k[G])\bigl)$ of all Hopf algebra endomorphisms is
described.

\begin{corollary}\colabel{endoDH}
Let $G$, $H$ be two finite groups. Then any Hopf algebra morphism
$\psi: D(k[G]) \to D(k[H]) $ between the Drinfel'd doubles has the
form\footnote{We denote by $\theta(? , \, y) \delta_{(?, \,
aba^{-1})} \in k[H]^*$ the $k$-linear map sending any $z \in H$ to
$\theta(z , \, y) \delta_{\{z, \, aba^{-1}\}}$.}
\begin{equation} \eqlabel{morfisdkg}
\psi ( e_g \# g') = \sum_{x\in G, \, y, z \in H} \,\, \lambda (g
x^{-1}, \, z) \, \omega (g', \, y) \, \theta(? , \, x) \delta_{(?,
\, z y z^{-1})} \,\# \, z \, v(g')
\end{equation}
for all $g$, $g' \in G$, where $(\lambda, \omega, \theta, v)$ is a
quadruple such that $v: G \to H$ is a morphism of groups, $\theta
: H\times G \to k$, $\lambda : G\times H \to k$, $\omega : G\times
H \to k$ are three maps satisfying the following compatibilities:
\begin{eqnarray}
\theta (1, g) &=& \delta_{g, 1_G} \eqlabel{dr1a}\\
\sum_{x\in G}\, \theta (h, x) &=& 1 \eqlabel{dr1}\\
\theta (h h', g) &=& \sum_{x\in G} \, \theta (h, x) \, \theta (h',
x^{-1}g) \eqlabel{dr2}\\
\omega (1, g) &=& 1 \eqlabel{dr3a}\\
\omega(g, h h') &=& \omega (g, h) \, \omega(g, h') \eqlabel{dr3}\\
\sum_{y\in H} \, \lambda (g, y) &=& \delta_{g, 1} \eqlabel{dr4}\\
\sum_{x\in G} \, \lambda (x, h) &=& \delta_{1, h} \eqlabel{dr4aa}
\end{eqnarray}
\begin{eqnarray}
\sum_{y\in H} \, \lambda (g, y) \, \lambda (g' , y^{-1} h) &=&
\delta_{g, g'} \, \lambda (g, h) \eqlabel{dr5}\\
\sum_{x\in G} \, \lambda (x, h) \, \lambda (g x^{-1}, h' ) &=&
\delta_{h, h'} \, \lambda (g, h) \eqlabel{dr6}\\
\sum_{x\in G} \,  \theta (h, x) \, \lambda (g x^{-1}, h' ) &=&
\sum_{x\in G} \, \theta (h, x) \, \lambda (x^{-1} g , h' ) \eqlabel{dr7}\\
\sum_{x \in G, \, y \in H} \, \lambda (g x^{-1}, y) \, \theta (h,
x) \, \theta (y^{-1}h y, g' ) &=& \delta_{g, g'} \, \theta (h, g)
\eqlabel{dr8}\\
\omega (g, h) \, \omega \bigl(g' , v(g)^{-1}h v(g) \bigr) &=&
\omega
(g g' , h)  \eqlabel{dr9}\\
\sum_{x\in G, \, y \in H} \, \theta (h, x g^{-1}) \, \omega (g,
y^{-1} h y) \lambda (g g' x^{-1}, y) &=& \omega (g', h) \, \theta
\bigl(v(g)^{-1} h v(g), g' \bigl) \eqlabel{dr10}\\
\lambda \bigl( g g' g^{-1}, v(g) \, h \, v(g)^{-1} \bigl) &=&
\lambda (g', h) \eqlabel{dr11}
\end{eqnarray}
for all $g$, $g'\in G$, $h$, $h'\in H$.

The correspondence $\psi \leftrightarrow (\lambda, \omega, \theta,
v)$ between the set of all Hopf algebra morphisms $\psi: D(k[G])
\to D(k[H])$ and the set of all maps $(\lambda, \omega, \theta,
v)$ satisfying the compatibility conditions \equref{dr1a} -
\equref{dr11} is bijective.
\end{corollary}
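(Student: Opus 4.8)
The plan is to specialize \thref{toatemorf} to the smash-product presentations $D(k[G]) = (k[G]^*)^{\rm cop} \# k[G]$ and $D(k[H]) = (k[H]^*)^{\rm cop} \# k[H]$ recorded before the statement, i.e. to apply \coref{endosmh} with $A = (k[G]^*)^{\rm cop}$, $H = k[G]$, $A' = (k[H]^*)^{\rm cop}$, $H' = k[H]$ and the adjoint actions $\triangleright$, $\triangleright'$ of \equref{actdudr}. By that corollary a Hopf algebra map $\psi : D(k[G]) \to D(k[H])$ is the same datum as a quadruple $(u, p, r, v)$ in which $u : (k[G]^*)^{\rm cop} \to (k[H]^*)^{\rm cop}$ and $r : k[G] \to (k[H]^*)^{\rm cop}$ are unitary coalgebra maps, $p : (k[G]^*)^{\rm cop} \to k[H]$ and $v : k[G] \to k[H]$ are morphisms of Hopf algebras, and \equref{C1ab}--\equref{C8ab} hold. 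The entire proof then consists in writing these four maps in coordinates on the canonical bases and re-expressing \equref{C1ab}--\equref{C8ab} in terms of the resulting scalars.

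Since a morphism of Hopf algebras between two group algebras is induced by a homomorphism of the underlying groups, $v$ is given by a group homomorphism $v : G \to H$. I would then expand the other three maps on the bases $\{e_g\}$ and $\{g\}$, setting $u(e_g) = \sum_{h\in H}\theta(h, g)\, e_h$, $p(e_g) = \sum_{y\in H}\lambda(g, y)\, y$ and $r(g) = \sum_{y\in H}\omega(g, y)\, e_y$; this introduces the three scalar families $\theta : H\times G \to k$ and $\lambda, \omega : G\times H \to k$. First I would dispose of the purely structural constraints: writing out $\varepsilon\circ u = \varepsilon$, $u(1)=1$ and $\Delta\circ u = (u\ot u)\circ\Delta$ on $e_g$ gives exactly \equref{dr1a}, \equref{dr1}, \equref{dr2}; the same computation for $r$ yields \equref{dr3a}, \equref{dr3}; and requiring $p$ to be a morphism of Hopf algebras (unitality, multiplicativity, comultiplicativity, counit) produces \equref{dr4}, \equref{dr4aa}, \equref{dr5}, \equref{dr6}.

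Next I would translate the six compatibilities of \coref{endosmh}. Because every element of $k[G]$ occurring here is group-like, \equref{C2ab} holds automatically; evaluating the remaining five on $a = e_g$, $b = e_{g'}$, $h = g'$ and comparing coefficients of the idempotents $e_h$ (using $g\triangleright' e_h = e_{ghg^{-1}}$) should give the correspondences \equref{C1ab}$\Leftrightarrow$\equref{dr7}, \equref{C3ab}$\Leftrightarrow$\equref{dr8}, \equref{C5ab}$\Leftrightarrow$\equref{dr9}, \equref{C7ab}$\Leftrightarrow$\equref{dr10} and \equref{C8ab}$\Leftrightarrow$\equref{dr11}. Together with the previous paragraph this matches the data $(u,p,r,v)$ subject to \equref{C1ab}--\equref{C8ab} with the families $(\lambda,\omega,\theta,v)$ subject to \equref{dr1a}--\equref{dr11}, so the asserted correspondence is bijective.

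Finally, to read off the closed formula \equref{morfisdkg} I would substitute the coordinate expressions of $u, p, r, v$ into \equref{morfbicrossmab}, using $\Delta^{(2)}(e_g) = \sum_{s,t} e_s \ot e_t \ot e_{g s^{-1} t^{-1}}$ and $y\triangleright' e_z = e_{yzy^{-1}}$. The raw result carries two more group-summation variables than \equref{morfisdkg}; the decisive simplification is that the inner convolution over $G$ collapses through the comultiplicativity relation \equref{dr6} for $\lambda$, and a relabelling of the two remaining $H$-variables then reproduces \equref{morfisdkg} verbatim. I expect the genuine difficulty to lie precisely in this last bookkeeping together with the coefficient extraction for the two truly mixed conditions \equref{C7ab} and \equref{C8ab}: these involve the adjoint action on both tensor legs and several copies of the coproduct of $e_g$, so matching them exactly with \equref{dr10} and \equref{dr11} requires care, whereas all the other identities reduce to routine evaluations against the dual basis.
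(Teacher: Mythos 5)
Your proposal is correct and follows essentially the same route as the paper's own proof: it specializes \coref{endosmh} to the smash-product presentations of the two Drinfel'd doubles, parametrizes the quadruple $(u,p,r,v)$ by the scalar families $(\theta,\lambda,\omega)$ and a group homomorphism $v$, matches \equref{C1ab}--\equref{C8ab} with \equref{dr7}--\equref{dr11} in exactly the pairing the paper uses, and collapses the raw expansion of \equref{morfbicrossmab} via \equref{dr6} just as the paper does. The only cosmetic difference is that the paper derives the conditions \equref{dr1a}--\equref{dr2} on $\theta$ by dualizing $u$ to an augmented algebra map $k[H]\to k[G]$, whereas you impose the unitary coalgebra-map axioms on $u$ directly; both computations produce the same identities.
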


\begin{proof}
It follows by a direct computation from \coref{endosmh} applied
for $A = ( k[G]^* )^{\rm cop}$, $H = k[H]$ and the left action
$\triangleright$ given by \equref{actdudr}. We shall indicate only
a sketch of the proof, the details being left to the reader. First
we should notice that any Hopf algebra map $v : k[G] \to k[H]$ is
uniquely determined by a morphism of the groups which will be
denoted also by $v: G \to H$. We shall prove now that any unitary
coalgebra map $ u: (k[G]^*)^{\rm cop} \to (k[H]^*)^{\rm cop}$ is
given by the formula
$$
u(e_g) ( h ) = \theta (h, g)
$$
for all $g\in G$, $h\in H$ and for a unique map $\theta: H\times G
\to k$ satisfying \equref{dr1a}-\equref{dr2}. Indeed, first we
notice that a unitary coalgebra map $ u: (k[G]^*)^{\rm cop} \to
(k[H]^*)^{\rm cop}$ is in fact the same as a unitary coalgebra map
$ u: k[G]^* \to k[H]^*$. From the duality algebras/coalgebras any
such map $u$ is uniquely implemented by a map of augmented
algebras $f: k[H] \to k[G]$ (i.e. $f$ is an algebra map and
$\varepsilon_{k[G]} \circ f = \varepsilon_{k[H]}$) by the formula
$u = f^*$, i.e. $u(e_h) = e_h \circ f$, for all $h\in G$. Now, any
$k$-linear map $f: k[H] \to k[G]$ is uniquely defined by a map
$\theta: H\times G \to k$ as follows:
$$
f(h) = \sum_{x\in G} \, \theta (h, x) x
$$
for any $g\in H$. We can easily prove that such a linear map is an
endomorphism of $k[G]$ as augmented algebras if and only if the
compatibility conditions \equref{dr1a}-\equref{dr2} hold. Now, any
$k$-linear map $r : k[G] \to (k[H]^*)^{\rm cop}$ is implemented by
a unique map $\omega : G\times H \to k$ such that
$$
r(g) = \sum_{y\in H} \, \omega (g, y) \, e_y
$$
for all $g\in G$. Such a map $r = r_{\omega}$ is an unitary
morphism of coalgebras if and only if $\omega$ satisfies the
compatibility conditions \equref{dr3a}-\equref{dr3}. Finally, any
$k$-linear map $p: (k[G]^*)^{\rm cop} \to k[H]$ is given in a
unique way by a map $\lambda : G\times H \to k$ such that
$$
p(e_g) = \sum_{y\in H} \, \lambda (g, y) y
$$
for all $g\in G$. Such a map $p = p_{\lambda}$ is a morphism of
Hopf algebras if and only if $\lambda$ satisfies the compatibility
conditions \equref{dr4}-\equref{dr6}.

Hence we have described the set of data $(u, p, r, v)$ of
\coref{endosmh}. As $H = k[H]$ is cocommutative the compatibility
condition \equref{C2ab} is trivially fulfilled. Moreover, by a
straightforward computation the compatibility conditions
\equref{C1ab} and \equref{C3ab}-\equref{C8ab} take the form
\equref{dr7}-\equref{dr11}. Now, by a direct computation we can
show that the expression of the morphism $\psi: D(k[G]) \to
D(k[H])$ given by \equref{morfbicrossmab} takes the following
simplified form:
\begin{eqnarray*}
\psi ( e_g \# g') &{=}& \sum_{a, b, c \in H, \, x, y \in G} \,\,
\lambda (x y^{-1}, \, a) \, \lambda (g x^{-1}, \, c) \, \omega
(g', \, b) \,
\theta(? , \, y) \delta_{(?, \, aba^{-1})} \,\# \, c \, v(g')\\
&{=}& \sum_{a, b, c\in H, y\in G} \, \bigl(\sum_{x \in G} \lambda
(x y^{-1}, \, a) \, \lambda (g x^{-1}, \, c) \bigl) \, \omega (g',
\,
b) \, \theta(? , \, y) \delta_{(?, \, aba^{-1})} \,\# \, c \, v(g')\\
&\stackrel{\equref{dr6}} {=}& \sum_{a, b, c\in H, y \in G} \,
\delta_{a, c} \lambda (g y^{-1}, c) \, \omega (g', \,
b) \, \theta(? , \, y) \delta_{(?, \, aba^{-1})} \,\# \, c \, v(g')\\
&{=}& \sum_{a, b \in H, y \in G} \,\, \lambda (g y^{-1}, \, a) \,
\omega (g', \, b) \, \theta(? , \, y) \delta_{(?, \, aba^{-1})}
\,\# \, a \, v(g')
\end{eqnarray*}
i.e., by interchanging the summation indices, we proved that
\equref{morfisdkg} holds.
\end{proof}

\section{The classification of bicrossed products} \selabel{casbic}
\thref{toatemorf} can be used to indicate when two arbitrary
bicrossed products $A \bowtie H$ and $A'\bowtie' H'$ are
isomorphic. Hence it gives the answer to the classification part
of the factorization problem for Hopf algebras and will be used in
its full generality for explicit examples in \seref{secexemple}.
In general, since the result is very technical and not so
transparent, we restrict ourselves to a special kind of
classification, namely the one that stabilizes one of the terms of
the bicrossed product. As explained in the introduction the
classification of bicrossed products up to an isomorphism that
stabilizes one of the terms has two motivations: the first one is
the cohomological point of view which descends to the
classification theory of group extensions and the second one is
the problem of describing and classifying the $A$-forms of a Hopf
algebra extensions from descent theory \cite{am12}.

Let $A$ and $H$ be two Hopf algebras. We shall classify up to an
isomorphism that stabilizes $A$ the set of all Hopf algebras $E$
that factorize through $A$ and $H$. It follows from \thref{carMaj}
that we have to classify all bicrossed products $A \bowtie H$
associated to all possible matched pairs of Hopf algebras $(A, H,
\triangleright, \triangleleft)$. First we need to recall from
\cite{AD} the following concept that will play a crucial role in
the paper.

\begin{definition}\delabel{lazycocdef}
Let $A$ and $H$ be two Hopf algebras. A coalgebra map $r: H
\rightarrow A$ is called \emph{cocentral} if the following
compatibility holds:
\begin{equation}\eqlabel{0aa}
r(h_{(1)}) \ot h_{(2)} = r(h_{(2)}) \ot h_{(1)}
\end{equation}
for all $h\in H$. We denote by $CoZ^{1}(H, A)$ the group with
respect to the convolution product of all unitary cocentral maps
from $H$ to $A$.
\end{definition}

\begin{remark}\relabel{involdisp}
If $r \in CoZ^{1}(H, A)$ then $S_A^2 \circ r = r$. Indeed, since
$r$ is a coalgebra map, the inverse of $r$ in the group
$CoZ^{1}(H, A)$ is $r^{-1} = S_A \circ r$, which is still a
unitary cocentral map, i.e. in particular a coalgebra map. Hence,
the inverse of $S_A \circ r = r^{-1}$ in the group $CoZ^{1}(H, A)$
is $S_A \circ (S_A \circ r) = S_A^2 \circ r$. Thus $S_A^2 \circ r
= r$.
\end{remark}
\begin{examples} \exlabel{cocilaz}
1. If $H$ is cocommutative, then the group $CoZ^{1} (H, A)$
coincides with the group of all unitary coalgebra maps $r: H \to
A$ with the convolution product.

In particular, let $G$ and $G'$ be two groups, $H = k[G]$ and $A =
k[G']$ the corresponding group algebras. Then the group $CoZ^{1}
(k[G], k[G'])$ is isomorphic to the group of all unitary maps $r:
G \to G'$.

2. Let $H = A := H_4$ be the Sweedler's four dimensional Hopf
algebra. Then, by a routine computation proved in
\leref{morf_H_4k[C_n]} we can show that $CoZ^{1} (H_4, H_4)$ is
the trivial group with only one element, namely the trivial
unitary cocentral map $r: H_4 \to H_4$, $r (h) = \varepsilon (h)
1_H$, for all $h\in H_4$.

3. Consider now $A = H_{4}$ and $H = k[C_{n}]$, where $C_n$ is the
cyclic group of order $n$ generated by $c$. Let $r: k[C_{n}] \to
H_{4}$ be a unitary coalgebra map. Then \equref{0aa} is trivially
fulfilled as $H$ is cocomutative. Moreover, since $r$ is a
coalgebra map we have $r(c^{i}) \in \{1, \, g\}$ for all $i \in
\{1, 2, ..., n-1\}$ and $r(1) = 1$. Thus $CoZ^{1} (k[C_n], H_4)$
is the abelian group $C_2\times C_2 \times \cdots \times C_2$ of
order $2^{n-1}$.

On the other hand, we can prove that $CoZ^{1} (H_4, k[C_{n}])$ is
the group with only one element, namely the trivial unitary
cocentral map $r: H_4 \to k[C_{n}]$, $r (h) = \varepsilon (h)
1_{C_n}$, for all $h\in H_4$.

4. A general method of constructing unitary cocentral maps is the
following. Let $\psi: A \ot H \to A \ot H$ be a left $A$-linear
Hopf algebra isomorphism. Then
$$ r = r_{\psi} : H \to A, \quad
r(h) =  (({\rm Id}\ot \varepsilon_{H}) \circ \psi ) (1_A \bowtie
h)
$$
for all $h\in H$ is a unitary cocentral map. Furthermore, $a \,
r_{\psi} (h) = r_{\psi} (h) a$, for all $h\in H$ and $a\in A$.
Conversely, if $r: H \to A$ is a unitary cocentral map such that
${\rm Im}(r) \subseteq Z (A)$, then
$$
\psi = \psi_{r}: A \ot H \to A \ot H, \quad \psi (a \ot h) := a \,
r(h_{(1)}) \ot h_{(2)}
$$
is a left $A$-linear Hopf algebra isomorphism. For further details
we refer to \coref{clasforms1}.
\end{examples}

\begin{definition}\delabel{stabil}
Let $A$ be a Hopf algebra, $A \bowtie H$ and $A \bowtie' H'$ two
bicrossed products associated to two matched pairs of Hopf
algebras $(A, H, \triangleright, \triangleleft)$ and $(A, H',
\triangleright', \triangleleft')$. We say that a morphism of Hopf
algebras $\psi : A \bowtie H \to A \bowtie' H'$ \emph{stabilizes}
$A$ if the following diagram
\begin{equation}\eqlabel{D1}
\begin{CD}
A@>i_A>> A \bowtie H \\
@VV{\rm Id}_A V @VV\psi V\\
A@>i_A>> A \bowtie' H'
\end{CD}
\end{equation}
is commutative.
\end{definition}

A morphism of Hopf algebras $\psi : A \bowtie H \to A \bowtie' H'$
stabilizes $A$ if and only if $\psi$ is a morphism of Hopf
algebras and left $A$-modules, where a bicrossed product $A
\bowtie H$ is viewed as a left $A$-module via the restriction of
scalars through the canonical inclusion $i_A : A \to A \bowtie H$.
Such (iso)morphisms are fully described in the following:

\begin{theorem}\thlabel{clasif1}
Let $A$ be a Hopf algebra, $(A, H, \triangleright, \triangleleft)$
and $(A, H', \triangleright', \triangleleft')$ two matched pairs
of Hopf algebras. Then:

$(1)$ There exists a one-to-one correspondence between the set of
all Hopf algebra morphisms $\psi : A \bowtie H \to A \bowtie ' H'
$ that stabilize $A$ and the set of all pairs $(r, v)$, where $r:
H \rightarrow A$, $v: H \rightarrow H'$ are unitary coalgebra maps
satisfying the following compatibility conditions for any $a \in
A$, $g$, $h \in H$:
\begin{eqnarray}
r(h_{(1)}) \ot v(h_{(2)}) &{=}& r(h_{(2)}) \ot v(h_{(1)})\eqlabel{0}\\
r(hg) &{=}& r(h_{(1)})\bigl(v(h_{(2)}) \triangleright'
r(g)\bigl)\eqlabel{1}\\
v(hg) &{=}& \bigl(v(h) \triangleleft' r(g_{(1)})\bigl)
v(g_{(2)})\eqlabel{2}\\
h \triangleright a &{=}& r(h_{(1)}) \, \bigl(v(h_{(2)})
\triangleright' a_{(1)} \bigl) \, (S_A \circ r) (h_{(3)} \triangleleft a_{(2)})  \eqlabel{3}\\
v(h \triangleleft a) &{=}& v(h) \triangleleft' a \eqlabel{4}
\end{eqnarray}
Under the above bijection the morphism $\psi : A \bowtie H \to A
\bowtie ' H'$ corresponding to $(r, v)$ is given by:
\begin{equation}\eqlabel{psi2a}
\psi(a \bowtie h) = a \, r(h_{(1)}) \, \bowtie' \, v(h_{(2)})
\end{equation}
for all $a\in A$ and $h\in H$.

$(2)$ The left $A$-linear Hopf algebra morphism $\psi : A \bowtie
H \to A \bowtie ' H'$ given by \equref{psi2a} is an isomorphism if
and only if $v: H \to H'$ is bijective.\footnote{That is $v: (H,
\triangleleft) \rightarrow (H', \triangleleft')$ is an unitary
isomorphism of right $A$-module coalgebras.}
\end{theorem}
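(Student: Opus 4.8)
The plan is to obtain both parts as specializations of \thref{toatemorf} applied with $A' = A$. For part $(1)$, I would first translate the stabilization condition of \deref{stabil} into the language of the quadruple $(u,p,r,v)$ attached to $\psi$. Recall from the proof of \thref{toatemorf} that $u(a) = (({\rm Id}\ot\varepsilon_{H'})\circ\psi)(a\bowtie 1_H)$ and $p(a) = ((\varepsilon_{A}\ot{\rm Id})\circ\psi)(a\bowtie 1_H)$. Since $\psi$ stabilizes $A$ exactly when $\psi(a\bowtie 1_H) = a\bowtie' 1_{H'}$ for all $a\in A$, applying ${\rm Id}\ot\varepsilon_{H'}$ and $\varepsilon_{A}\ot{\rm Id}$ to this identity forces $u = {\rm Id}_A$ and $p = \varepsilon_{A}(-)\,1_{H'}$, the trivial coalgebra map. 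Thus the morphisms stabilizing $A$ correspond precisely to the quadruples of \thref{toatemorf} whose first two entries have this special form, so the free data collapses to the pair $(r,v)$.

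Next I would substitute $u={\rm Id}_A$ and $p=\varepsilon_{A}(-)\,1_{H'}$ into the eight compatibilities \equref{C1}--\equref{C8} and into the formula \equref{morfbicros}. Using the unit axioms \equref{mp1} of the matched pair $(A,H',\triangleright',\triangleleft')$ together with $\varepsilon_{A}\circ r = \varepsilon_{H}$, one checks that \equref{C1}, \equref{C3} and \equref{C4} become identities (and so drop out), that \equref{C2}, \equref{C5} and \equref{C6} are literally \equref{0}, \equref{1} and \equref{2}, and that \equref{morfbicros} collapses to \equref{psi2a}. The two substantive reductions are \equref{C7}$\Leftrightarrow$\equref{3} and \equref{C8}$\Leftrightarrow$\equref{4}; the latter simplifies directly once $1_{H'}\triangleleft'(-) = \varepsilon(-)\,1_{H'}$ is used.

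I expect the main obstacle to be the equivalence \equref{C7}$\Leftrightarrow$\equref{3}. Substituting $u,p$ turns \equref{C7} into the intermediate identity $r(h_{(1)})\,(v(h_{(2)})\triangleright' a) = (h_{(1)}\triangleright a_{(1)})\,r(h_{(2)}\triangleleft a_{(2)})$. To pass from this to \equref{3} I would multiply on the right by $(S_A\circ r)(h_{(3)}\triangleleft a_{(2)})$ and use that $S_A\circ r$ is the two-sided convolution inverse of the coalgebra map $r$, so that $r\ast(S_A\circ r) = (S_A\circ r)\ast r = \eta\varepsilon$ (cf. \reref{involdisp}), together with the coalgebra-map property \equref{6}, namely $\Delta(h\triangleleft a) = (h_{(1)}\triangleleft a_{(1)})\ot(h_{(2)}\triangleleft a_{(2)})$; the two trailing factors then telescope to $\varepsilon(h_{(2)})\varepsilon(a_{(2)})\,1_A$, leaving exactly $h\triangleright a$. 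The reverse implication is symmetric, reinserting an $r$ and collapsing $(S_A\circ r)\ast r = \eta\varepsilon$. The converse half of part $(1)$ — that any $(r,v)$ satisfying \equref{0}--\equref{4} yields, via $(u,p,r,v) = ({\rm Id}_A,\,\varepsilon_{A}(-)1_{H'},\,r,\,v)$, a morphism stabilizing $A$ — is then immediate from \thref{toatemorf}.

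For part $(2)$ the plan is to analyse \equref{psi2a} purely as a $k$-linear map $A\ot H\to A\ot H'$ and to factor it as $\psi = ({\rm Id}_A\ot v)\circ\widetilde\psi$, where $\widetilde\psi\colon A\ot H\to A\ot H$ is given by $\widetilde\psi(a\ot h) = a\,r(h_{(1)})\ot h_{(2)}$. The key point is that $\widetilde\psi$ is \emph{always} a linear isomorphism, with inverse $a\ot h\mapsto a\,(S_A\circ r)(h_{(1)})\ot h_{(2)}$: both composites collapse using $r\ast(S_A\circ r) = (S_A\circ r)\ast r = \eta\varepsilon$. Consequently $\psi$ is bijective if and only if ${\rm Id}_A\ot v$ is, and since $A$ is a nonzero $k$-vector space this holds if and only if $v$ is bijective. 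Finally, a bijective morphism of Hopf algebras is automatically an isomorphism, its inverse being again a morphism of Hopf algebras; this upgrades ``$\psi$ bijective'' to ``$\psi$ an isomorphism'' and completes the argument.
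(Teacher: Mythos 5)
Your proof is correct, and part $(1)$ follows the paper's own route exactly: specialize \thref{toatemorf} to $A'=A$, observe that stabilization forces $u={\rm Id}_A$ and $p=\varepsilon_A(-)1_{H'}$, and reduce \equref{C1}--\equref{C8} to \equref{0}--\equref{4}; your telescoping argument for \equref{C7}$\Leftrightarrow$\equref{3} is precisely the step the paper compresses into the remark that $r$ is convolution-invertible with inverse $S_A\circ r$. Part $(2)$, however, is genuinely different. The paper argues the two implications separately: when $v$ is bijective it first applies ${\rm Id}\ot v^{-1}$ to \equref{0} to deduce that $r$ is cocentral (hence $S_A^2\circ r=r$ by \reref{involdisp}) and then exhibits the explicit inverse $\psi^{-1}(a\bowtie' h')=a\,(S_A\circ r\circ v^{-1})(h'_{(1)})\bowtie v^{-1}(h'_{(2)})$; conversely, when $\psi$ is an isomorphism it applies part $(1)$ to $\psi^{-1}$, obtaining a pair $(q,t)$, and composing the two morphisms and applying $\varepsilon_A$ on the first tensorand yields $t\circ v={\rm Id}_H$ and $v\circ t={\rm Id}_{H'}$. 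You instead factor $\psi=({\rm Id}_A\ot v)\circ\widetilde{\psi}$ with $\widetilde{\psi}(a\ot h)=a\,r(h_{(1)})\ot h_{(2)}$ always a linear bijection (its inverse built from $S_A\circ r$, using only that $r$ is a coalgebra map), reducing both implications at once to the bijectivity of ${\rm Id}_A\ot v$, which over a field with $A\neq 0$ is equivalent to that of $v$; the standard fact that a bijective Hopf algebra (and left $A$-linear) morphism is an isomorphism finishes the argument. Your route is shorter, purely linear-algebraic, and never invokes \reref{involdisp} or cocentrality. What the paper's longer route buys is its byproducts: the observation that bijectivity of $v$ forces $r$ to be a unitary cocentral map is exactly the hinge on which \deref{coho} and the classification \thref{clasth} turn (and it resurfaces in \coref{clasforms1}), and the explicit formula for $\psi^{-1}$ shows the inverse is again of the canonical form \equref{psi2a}. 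Both byproducts also follow from your factorization with one extra line each (apply ${\rm Id}\ot v^{-1}$ to \equref{0}; compose your two explicit inverses), so nothing essential is lost.
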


\begin{proof} $(1)$ We shall apply \thref{toatemorf} for $A' = A$.
Any morphism of Hopf algebras $\psi : A \bowtie H \to A \bowtie '
H'$ is given by \equref{morfbicros} for some unitary coalgebra
maps $(u, p, r, v)$ satisfying \equref{C1}-\equref{C8}. Now, such
a morphism $\psi : A \bowtie H \to A \bowtie ' H'$ makes the
diagram \equref{D1} commutative if and only if the map $\alpha : A
\to A \ot H'$ constructed in the proof of \thref{toatemorf} takes
the form $\alpha (a) = a \ot 1_{H'}$. This is equivalent to the
fact that $u: A \to A$ and $p: A \to H'$, constructed in the same
proof, are precisely the following: $u (a) = a$ and $p(a) =
\varepsilon_A (a) 1_{H'}$, for any $a\in A$. With these maps $u$
and $p$, the compatibility relations \equref{C1}-\equref{C8} are
reduced to \equref{0}-\equref{4}. For instance, \equref{C7} takes
the form
$$
(h_{(1)} \triangleright a_{(1)}) r(h_{(2)} \triangleleft a_{(2)})
= r(h_{(1)}) \bigl(v(h_{(2)}) \triangleright' a \bigl)
$$
which is equivalent to \equref{3}, as $r$ is invertible in the
convolution algebra $\Hom (H, A)$ with the inverse $S_A \circ r$.
We also note that \equref{C8} takes the easier form given by
\equref{4} which means precisely the fact that the unitary
coalgebra map $v: H \to H'$ is also a morphism of right
$A$-modules. Finally, the formula of $\psi$ given by
\equref{morfbicros} takes the simplified form \equref{psi2a}.

$(2)$ Assume first that $v : H \to H'$ is bijective with the
inverse $v^{-1}$. Applying ${\rm Id} \ot v^{-1}$ in \equref{0} we
obtain that
$$
r(h_{(1)}) \ot h_{(2)} = r(h_{(2)}) \ot h_{(1)}
$$
for all $h\in H$. Thus, $r$ is a unitary cocentral map. In
particular, it follows from \reref{involdisp} that $S_A^2 \circ r
= r$. Using this observation we can easily prove that the map
$$
\psi^{-1} : A \bowtie' H' \to A \bowtie  H, \quad \psi^{-1} (a
\bowtie' h') = a \, \bigl( S_A \circ r \circ v^{-1}\bigl)
(h'_{(1)}) \, \bowtie \, v^{-1}(h'_{(2)})
$$
for all $a\in A$ and $h'\in H'$ is the inverse of $\psi$.

Conversely, assume that $\psi : A \bowtie H \to A \bowtie ' H'$
given by \equref{psi2a} is bijective, that is an isomorphism of
Hopf algebras and left $A$-modules. Then, there exists a left
$A$-module Hopf algebra morphism $\varphi: A \bowtie ' H' \to A
\bowtie H$ such that $\psi \circ \varphi = Id_{A \bowtie ' H'}$
and $\varphi \circ \psi = Id_{A \bowtie H}$. It follows from the
first part of the theorem that there exist two unitary coalgebra
maps $q: H' \to A$ and $t: H' \to H$ satisfying the compatibility
conditions \equref{0}-\equref{4}, where the pair $(\triangleright,
\triangleleft)$ is interchanged with $(\triangleright',
\triangleleft')$, such that $\varphi$ is given by
$$
\varphi \, (a \bowtie' h') = a \, q(h_{(1)}') \, \bowtie \,
t(h_{(2)}')
$$
for all $a\in A$ and $h'\in H'$. Let $h\in H$. From $(\varphi
\circ \psi) (1_A \bowtie h)  = 1_A \bowtie h$ we obtain
$$
1_A \bowtie h =  r(h_{(1)}) (q \circ v)(h_{(2)}) \bowtie (t \circ
v) (h_{(3)})
$$
If we apply $\varepsilon_A$ on the first position in the above
equality we obtain $ t \circ v = {\rm Id}_H $. On the other hand,
let $h'\in H'$. From $(\psi \circ \varphi) (1_A \bowtie' h') = 1_A
\bowtie' h'$ we obtain
$$
1_A \bowtie' h' = q(h_{(1)}') (r \circ t)(h_{(2)}') \bowtie'
(v\circ t) (h_{(3)}')
$$
If we apply $\varepsilon_A$ on the first position we obtain $ v
\circ t = {\rm Id}_{H'}$. Hence, it follows that $v$ is bijective
and $v^{-1} = t$, as needed.
\end{proof}

Now, we shall fix two Hopf algebras $A$ and $H$. We define the
small category ${\mathcal M} {\mathcal P}(A, H)$ of all matched
pairs as follows: the objects of ${\mathcal M} {\mathcal P}(A, H)$
are the set of all pairs $(\triangleleft, \triangleright)$, such
that $(A, H, \triangleright, \triangleleft)$ is a matched pair of
Hopf algebras. A morphism $\psi : (\triangleleft, \triangleright)
\to (\triangleleft', \triangleright')$ in the category ${\mathcal
M} {\mathcal P}(A, H)$ is a Hopf algebra map $\psi : A \bowtie H
\to A \bowtie ' H' $ that stabilizes $A$. In order to classify up
to an isomorphism that stabilizes $A$ all Hopf algebras $E$ that
factorize through $A$ and $H$ we have to describe the skeleton of
the category ${\mathcal M} {\mathcal P}(A, H)$. This will be done
next.

\begin{definition}\delabel{coho}
Let $A$ and $H$ be two Hopf algebras. Two objects
$(\triangleright, \triangleleft)$ and $(\triangleright',
\triangleleft')$ of the category ${\mathcal M} {\mathcal P}(A, H)$
are called \emph{cohomologous} and we denote this by
$(\triangleright, \triangleleft) \approx (\triangleright',
\triangleleft')$ if there exists a pair of maps $(r, v)$ such
that:

$(1)$ $r: H \rightarrow A$ is a unitary cocentral map, $v: H
\rightarrow H$ is an unitary isomorphism of coalgebras satisfying
the following compatibilities for any $g$, $h \in H$:
\begin{eqnarray}
r(hg) &{=}& r(h_{(1)}) \, \bigl(v(h_{(2)}) \triangleright '
r(g)\bigl)\eqlabel{1aa}\\
v(hg) &{=}& \bigl(v(h) \triangleleft' r(g_{(1)})\bigl) \,
v(g_{(2)})\eqlabel{2aa}
\end{eqnarray}

$(2)$ The actions $(\triangleright, \triangleleft)$ are
implemented from $(\triangleright', \triangleleft')$ via $(r, v)$
as follows:
\begin{eqnarray}
h \triangleleft a &=& v^{-1} \bigl(v(h) \triangleleft' a \bigl) \eqlabel{3aaaaaa} \\
h \triangleright a &=& r(h_{(1)}) \, \bigl(v(h_{(2)})
\triangleright ' a_{(1)} \bigl) \, \bigl(S_A \circ r \circ
v^{-1}\bigl) ( v(h_{(3)}) \triangleleft ' a_{(2)}) \eqlabel{3aa}
\end{eqnarray}
for all $a \in A$ and $h \in H$, where $v^{-1}$ is the usual
inverse of the bijective map $v$.
\end{definition}

\begin{remark}\relabel{cobord} The condition
\equref{3aaaaaa} is equivalent to saying that $v : (H,
\triangleleft) \to (H, \triangleleft')$ is an isomorphism of right
$A$-module coalgebras. There exists a trivial object in ${\mathcal
M} {\mathcal P}(A, H)$, namely $(\triangleleft',
\triangleright')$, where $\triangleleft'$, $\triangleright'$ are
both the trivial actions. An object $(\triangleright,
\triangleleft)$ of ${\mathcal M} {\mathcal P}(A, H)$ is called a
\emph{coboundary} if $(\triangleright, \triangleleft)$ is
cohomologous with the trivial object $(\triangleleft',
\triangleright')$. Thus, if we write down the conditions from
\deref{coho} we can easily prove that an object $(\triangleright,
\triangleleft)$ of ${\mathcal M} {\mathcal P}(A, H)$ is a
coboundary if and only if the right action $\triangleleft$ is the
trivial action and the left action $\triangleright$ is implemented
by
$$
h \triangleright a = r(h_{(1)}) \, a \,  S_A \bigl(r
(h_{(2)})\bigl)
$$
for some unitary cocentral map $r: H \rightarrow A$ that is also a
morphism of Hopf algebras. This is in fact the necessary and
sufficient condition for a bicrossed product $A \bowtie H$ to be
isomorphic as left $A$-modules and Hopf algebras to the usual
tensor product $A \ot H$.
\end{remark}

The classification theorem now follows: the set of all isomorphism
types of bicrossed products $A\bowtie H$ which stabilize $A$ (i.e.
the skeleton of the category ${\mathcal M} {\mathcal P}(A, H)$) is
in bijection with a cohomologically type pointed set ${\mathcal
H}^{2} (A, H)$.

\begin{theorem} \textbf{(The classification of bicrossed
products)} \thlabel{clasth} Let $A$ and $H$ be two Hopf algebras.
Then $\approx$ is an equivalence relation on the set ${\mathcal M}
{\mathcal P}(A, H)$ and there exists an one-to-one correspondence
between the set of objects of the skeleton of the category
${\mathcal M} {\mathcal P}(A, H)$) and the pointed quotient set $
{\mathcal H}^{2} (A, H) := {\mathcal M} {\mathcal P}(A,
H)/\approx$.
\end{theorem}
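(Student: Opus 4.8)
The plan is to reduce both assertions to a single dictionary: two objects $(\triangleright, \triangleleft)$ and $(\triangleright', \triangleleft')$ of ${\mathcal M}{\mathcal P}(A, H)$ satisfy $(\triangleright, \triangleleft) \approx (\triangleright', \triangleleft')$ in the sense of \deref{coho} if and only if there exists an \emph{isomorphism} $\psi : A \bowtie H \to A \bowtie' H$ in the category ${\mathcal M}{\mathcal P}(A, H)$, i.e. a left $A$-linear Hopf algebra isomorphism. Once this is in place, the fact that $\approx$ is an equivalence relation is automatic, because "being isomorphic" is reflexive, symmetric and transitive; and the $\approx$-classes are then exactly the isomorphism classes of objects, which is precisely what the skeleton enumerates, giving the desired bijection onto ${\mathcal H}^2(A, H)$.

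First I would unpack this dictionary using \thref{clasif1}. A pair $(r, v)$ witnessing $(\triangleright, \triangleleft) \approx (\triangleright', \triangleleft')$ consists of a unitary cocentral $r : H \to A$ and a unitary coalgebra isomorphism $v : H \to H$ subject to \equref{1aa}, \equref{2aa}, \equref{3aaaaaa} and \equref{3aa}. I would verify that, for bijective $v$, these are precisely conditions \equref{0}--\equref{4}. Indeed, cocentrality of $r$ together with bijectivity of $v$ is equivalent to \equref{0} (apply ${\rm Id} \ot v$, resp. ${\rm Id} \ot v^{-1}$); conditions \equref{1aa} and \equref{2aa} are \equref{1} and \equref{2}; condition \equref{3aaaaaa} restates \equref{4} (right $A$-linearity of $v$); and substituting $h_{(3)} \triangleleft a_{(2)} = v^{-1}\bigl(v(h_{(3)}) \triangleleft' a_{(2)}\bigr)$ into \equref{3aa} turns it into \equref{3}. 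Hence $(r, v)$ satisfies \equref{0}--\equref{4}, so by \thref{clasif1}(1) it yields the left $A$-linear Hopf algebra map $\psi$ of \equref{psi2a}, which by \thref{clasif1}(2) is an isomorphism exactly because $v$ is bijective. The converse runs backwards along the same correspondence: an $A$-stabilizing isomorphism supplies a pair $(r, v)$ with $v$ bijective, whence $r$ is cocentral (apply ${\rm Id} \ot v^{-1}$ to \equref{0}, as in the proof of \thref{clasif1}(2)), and conditions \equref{1aa}--\equref{3aa} follow.

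With the dictionary established, I would record that $\approx$ is an equivalence relation. Reflexivity is witnessed by the identity, corresponding to $r = \varepsilon_H(\,\cdot\,)1_A$ and $v = {\rm Id}_H$; symmetry follows from the inverse isomorphism $\psi^{-1}$, which stabilizes $A$ and corresponds to $\bigl(S_A \circ r \circ v^{-1},\, v^{-1}\bigr)$ as computed in the proof of \thref{clasif1}(2); transitivity follows since the composite of two $A$-stabilizing isomorphisms is again an $A$-stabilizing isomorphism. Finally, the skeleton of ${\mathcal M}{\mathcal P}(A, H)$ is a set of representatives, one per isomorphism class of objects, and two objects lie in the same class exactly when they are $\approx$-related; so sending each skeleton object to its $\approx$-class is a bijection onto ${\mathcal H}^2(A, H)$. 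The pointed structure is inherited, the distinguished point being the class of the trivial matched pair, whose bicrossed product is the plain tensor product $A \ot H$ (cf. \reref{cobord}).

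The step demanding the most care is the matching of \equref{3aa} with \equref{3}: one must feed the $A$-module identification \equref{3aaaaaa} into the Sweedler legs of \equref{3aa} so that $S_A \circ r \circ v^{-1}$ applied to $v(h_{(3)}) \triangleleft' a_{(2)}$ collapses to $S_A \circ r$ applied to $h_{(3)} \triangleleft a_{(2)}$, keeping the three coproduct legs of $h$ correctly aligned throughout the substitution. Everything else is a direct transcription afforded by \thref{clasif1}, so this single bookkeeping point is the only genuinely non-formal part of the argument.
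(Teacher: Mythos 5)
Your proposal is correct and follows essentially the same route as the paper: both reduce everything to the dictionary, furnished by \thref{clasif1}, between the relation $\approx$ of \deref{coho} and the existence of a left $A$-linear Hopf algebra isomorphism $A \bowtie H \cong A \bowtie' H$ (the matching of \equref{3aa} with \equref{3} via \equref{3aaaaaa} being the only point of substance), and then conclude because isomorphism is an equivalence relation whose classes the skeleton enumerates. Your write-up merely makes explicit the verifications (reflexivity, symmetry via the pair $\bigl(S_A \circ r \circ v^{-1}, \, v^{-1}\bigr)$, transitivity, and the pointed structure) that the paper's proof leaves implicit.
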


\begin{proof} It follows from \thref{clasif1} that
$(\triangleright, \triangleleft) \approx (\triangleright',
\triangleleft')$ if and only if there exists a left $A$-linear
Hopf algebra isomorphism $\psi : A \bowtie H \to A \bowtie' H$,
where $A \bowtie H$ and  $A \bowtie' H$ are the bicrossed products
associated to the matched pairs $(A, H, \triangleright,
\triangleleft)$ and respectively $(A, H, \triangleright',
\triangleleft')$. The compatibility condition \equref{3aa} is
exactly \equref{3} taking into account \equref{3aaaaaa}. Thus,
$\approx$ is an equivalence relation on the set ${\mathcal M}
{\mathcal P}(A, H)$ and we are done.
\end{proof}

\thref{clasif1} has several applications. Three of them are given
bellow. First we shall apply it for infinite dimensional quantum
doubles. Let $D_{\lambda} (A, H)$ and $D_{\lambda'} (A, H')$ be
two generalized quantum doubles associated to two skew pairings
$\lambda$, $\lambda'$ as constructed in $(4)$ \exref{exempleban}.
We shall prove a necessary and sufficient condition for
$D_{\lambda} (A, H) \cong D_{\lambda'} (A, H')$, an isomorphism of
Hopf algebras and left $A$-modules.

\begin{corollary} \colabel{douadh}
Let $\lambda: H \ot A \rightarrow k$, $\lambda': H' \ot A
\rightarrow k$ be two skew pairings of Hopf algebras, $D_{\lambda}
(A, H)$ and $D_{\lambda'} (A, H')$ the generalized quantum
doubles. The following are equivalent:

$(1)$ There exists a left $A$-linear Hopf algebra isomorphism
$D_{\lambda} (A, H) \cong D_{\lambda'} (A, H')$;

$(2)$ There exists a pair of maps $(r, v)$, where $r: H
\rightarrow A$ is a unitary cocentral map, $v: H \rightarrow H'$
is an unitary isomorphism of coalgebras satisfying the following
four compatibility conditions:
\begin{eqnarray*}
&& r(hg) \, = \, r(h_{(1)}) \, r(g_{(2)}) \, \lambda' \bigl(
v(h_{(2)}),
\, r(g_{(3)}) \, S_A ( r(g_{(1)})) \bigl) \eqlabel{1aab}\\
&& v(hg) \, = \, v(h_{(2)}) v(g_{(2)}) \lambda' \bigl( S_{H'}
(v(h_{(1)}))\, v(h_{(3)}), \, r(g_{(1)})\bigl)\eqlabel{2aab}\\
&& v(h_{(2)}) \, \lambda \bigl(S_H (h_{(1)}) h_{(3)}, \,  a \bigl)
\, = \,  v(h_{(2)}) \, \lambda' \bigl(S_{H'} (v(h_{(1)})) \,
v(h_{(3)}),
\, a \bigl)\\
&& a_{(2)} r(h_{(2)}) \, \lambda \bigl(S_H (h_{(1)}), a_{(1)}
\bigl) \, \lambda \bigl(h_{(3)},  a_{(3)} \bigl) \, = \,  \\
&& = r(h_{(1)}) a_{(2)} \, \lambda' \bigl( v(h_{(3)}),  a_{(3)}
\bigl) \, \lambda' \bigl( S_{H'}(v(h_{(2)})),  a_{(1)} \bigl)
\eqlabel{3aab}
\end{eqnarray*}
for all $a \in A$, $g$, $h \in H$.
\end{corollary}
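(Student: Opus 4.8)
The plan is to apply \thref{clasif1} directly. By item $(4)$ of \exref{exempleban}, the generalized quantum doubles are precisely the bicrossed products $D_{\lambda}(A, H) = A \bowtie_{\lambda} H$ and $D_{\lambda'}(A, H') = A \bowtie'_{\lambda'} H'$ associated to the matched pairs whose actions $\triangleright_{\lambda}$, $\triangleleft_{\lambda}$ (respectively $\triangleright'_{\lambda'}$, $\triangleleft'_{\lambda'}$) are the ones given by \equref{dhsk1} and \equref{dhsk2}. Hence \thref{clasif1} applies verbatim: a left $A$-linear Hopf algebra morphism $D_{\lambda}(A, H) \to D_{\lambda'}(A, H')$ is the same datum as a pair $(r, v)$ of unitary coalgebra maps $r : H \to A$, $v : H \to H'$ satisfying \equref{0}--\equref{4}, and by part $(2)$ of that theorem such a morphism is an isomorphism exactly when $v$ is bijective. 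This already gives the equivalence $(1) \Leftrightarrow (2)$ at an abstract level; the real content of the corollary is to rewrite the five conditions \equref{0}--\equref{4} in the explicit form listed in $(2)$, using the concrete actions coming from $\lambda$ and $\lambda'$.

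First I would dispose of \equref{0}. Since we are after isomorphisms, $v$ is bijective, and applying $\Id \ot v^{-1}$ to \equref{0} shows, exactly as in the proof of part $(2)$ of \thref{clasif1}, that \equref{0} is equivalent to the cocentrality of $r$. This is precisely the requirement in $(2)$ that $r$ be a unitary cocentral map and $v$ a unitary isomorphism of coalgebras; moreover \reref{involdisp} then supplies the identity $S_A^2 \circ r = r$, which I would keep at hand for the remaining computations.

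Next I would translate the four surviving conditions one at a time, substituting the explicit formulas \equref{dhsk1}, \equref{dhsk2} for $\triangleleft'_{\lambda'}$, $\triangleright'_{\lambda'}$ and $\triangleleft_{\lambda}$, and simplifying by means of the skew-pairing axioms (multiplicativity in each slot and the antipode identity $(\lambda')^{-1}(h', a) = \lambda'(S_{H'}(h'), a)$) together with the fact that $r$ and $v$ are coalgebra maps. Condition \equref{4}, $v(h \triangleleft a) = v(h) \triangleleft' a$, becomes the third listed compatibility essentially without computation: substituting $\triangleleft_{\lambda}$ on the left and $\triangleleft'_{\lambda'}$ on the right and pulling $v$ through the coproduct gives it directly. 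Condition \equref{2} becomes the second listed compatibility by the same mechanism, simply spelling out $v(h) \triangleleft'_{\lambda'} r(g_{(1)})$. Condition \equref{1} yields the first listed compatibility once $v(h_{(2)}) \triangleright'_{\lambda'} r(g)$ is expanded via \equref{dhsk2}; here one uses multiplicativity of $\lambda'$ and its antipode identity to merge the two resulting scalar factors into the single $\lambda'\bigl(v(h_{(2)}), r(g_{(3)}) S_A(r(g_{(1)}))\bigr)$, with the coproduct of $r(g)$ supplying the three legs $r(g_{(1)})$, $r(g_{(2)})$, $r(g_{(3)})$.

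The real work is condition \equref{3}, which produces the fourth listed compatibility and is the main obstacle. It is cleanest to start not from \equref{3} itself but from its equivalent pre-simplified form used in the proof of \thref{clasif1}, namely $(h_{(1)} \triangleright_{\lambda} a_{(1)}) \, r(h_{(2)} \triangleleft_{\lambda} a_{(2)}) = r(h_{(1)})\bigl(v(h_{(2)}) \triangleright'_{\lambda'} a\bigr)$, which avoids carrying the convolution inverse $S_A \circ r$. Unlike the other conditions, this couples all three actions $\triangleright_{\lambda}$, $\triangleright'_{\lambda'}$ and $\triangleleft_{\lambda}$ simultaneously, so after substituting \equref{dhsk1}, \equref{dhsk2} one is left with a sum over many Sweedler legs of $h$ and $a$ carrying a product of several scalar factors. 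Repeated use of the skew-pairing axioms, of the cocentrality relation $r(h_{(1)}) \ot h_{(2)} = r(h_{(2)}) \ot h_{(1)}$, and of $S_A^2 \circ r = r$ then collapses both sides to the stated identity $a_{(2)} r(h_{(2)}) \lambda(S_H(h_{(1)}), a_{(1)}) \lambda(h_{(3)}, a_{(3)}) = r(h_{(1)}) a_{(2)} \lambda'(v(h_{(3)}), a_{(3)}) \lambda'(S_{H'}(v(h_{(2)})), a_{(1)})$. Since every rewriting used is reversible, the four explicit conditions in $(2)$ are jointly equivalent to \equref{1}--\equref{4}; combined with the treatment of \equref{0} above, this establishes $(1) \Leftrightarrow (2)$.
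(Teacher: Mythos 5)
Your proposal is correct and takes essentially the same route as the paper's own proof: identify $D_{\lambda}(A,H)$ and $D_{\lambda'}(A,H')$ as the bicrossed products of $(4)$ in \exref{exempleban}, apply \thref{clasif1}, and rewrite \equref{0}--\equref{4} in terms of $\lambda$, $\lambda'$ using the skew-pairing axioms together with $S_A^2\circ r = r$ (the paper invokes the equivalent identity $\lambda(h,a)=\lambda(S_H(h),S_A(a))$ from \cite{DT2}). Your write-up merely fleshes out the paper's terse argument, e.g.\ by making explicit the reduction of \equref{0} to cocentrality of $r$ via the bijectivity of $v$, and by working from the pre-simplified form of \equref{3}, both of which are exactly the moves implicit in the paper.
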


\begin{proof} First we note that $D_{\lambda} (A, H) = A \bowtie_{\lambda}
H$, where  the matched pair $(A, H, \triangleleft_{\lambda}, \,
\triangleright_{\lambda})$ is given in \equref{dhsk1} and
\equref{dhsk2}. Now, we are in position to apply \thref{clasif1}.
The four compatibility conditions from the second statement are
precisely \equref{1}-\equref{4} applied to the matched pairs
associated to $\lambda$ and $\lambda'$. In order to simplify these
compatibilities, the axioms of the skew pairings $\lambda$ and
$\lambda'$ are used as well as the fact that $\lambda (h, a) =
\lambda (S_H(h), S_A (a))$, for all $h\in H$ and $a\in A$
\cite[Lemma 1.4]{DT2}. The third compatibility above is
\equref{4}, which means exactly the fact that $v: H \rightarrow
H'$ is also a right $A$-module map.
\end{proof}

The next corollary provides necessary and sufficient conditions
for a bicrossed product $A \bowtie H$ to be isomorphic to a smash
product $A\#' H'$ such that the isomorphism stabilizes $A$.

\begin{corollary} \colabel{SchZ}
Let $(A, H, \triangleright, \triangleleft)$ be a matched pair of
Hopf algebras, $H'$ a Hopf algebra and $(A, \triangleright')$ a
left $H'$-module algebra and coalgebra satisfying the
compatibility condition \equref{smash1}. The following are
equivalent:

$(1)$ There exists a left $A$-linear Hopf algebra isomorphism $A
\bowtie H \cong A\#' H'$;

$(2)$ The right action $\triangleleft$ is the trivial action and
there exists a pair $(r, v)$, where  $r: H \rightarrow A$ is a
unitary cocentral map, $v: H \rightarrow H'$ is an isomorphism of
Hopf algebras such that the left action $\triangleright$ is
implemented by the formula:
\begin{equation}\eqlabel{implsmsh}
h \triangleright a = r (h_{(1)}) \, \bigl(v(h_{(2)})
\triangleright ' a \bigl) \, (S_A \circ r) (h_{(3)})
\end{equation}
and the following compatibility condition holds:
\begin{equation}\eqlabel{implsmshaa}
r(hg) = r(h_{(1)})\bigl(v(h_{(2)}) \triangleright' r(g)\bigl)
\end{equation}
for all $h$, $g\in H$ and $a\in A$.
\end{corollary}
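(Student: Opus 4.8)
The plan is to apply \thref{clasif1} with the second matched pair chosen so that its bicrossed product is the smash product $A \#' H'$. By part $(1)$ of \exref{exempleban}, the hypotheses on $(A, \triangleright')$---that it is a left $H'$-module algebra and coalgebra for which \equref{smash1} holds---are exactly what guarantee that $(A, H', \triangleright', \triangleleft')$ is a matched pair of Hopf algebras with $\triangleleft'$ the trivial right action $x \triangleleft' a = \varepsilon_A(a)\,x$, whose associated bicrossed product is precisely $A \#' H'$. Hence a left $A$-linear Hopf algebra isomorphism $A \bowtie H \cong A \#' H'$ is one of the morphisms classified by \thref{clasif1}, and the entire corollary amounts to reading off what the compatibilities \equref{0}--\equref{4} say once $\triangleleft'$ is trivial and $v$ is bijective.

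For the implication $(1) \Rightarrow (2)$ I would start from the pair $(r, v)$ of unitary coalgebra maps furnished by \thref{clasif1}, noting that bijectivity of $v$ comes from part $(2)$ of that theorem. Applying ${\rm Id} \ot v^{-1}$ to \equref{0} makes $r$ cocentral. The decisive step is \equref{4}: with $\triangleleft'$ trivial it reads $v(h \triangleleft a) = \varepsilon_A(a)\,v(h)$, and applying $v^{-1}$ forces $h \triangleleft a = \varepsilon_A(a)\,h$, i.e.\ $\triangleleft$ is also trivial. With $\triangleleft$ trivial the argument $h_{(3)} \triangleleft a_{(2)}$ in \equref{3} collapses under the counit, and after absorbing the counit the right-hand side becomes exactly \equref{implsmsh}, while \equref{1} is literally \equref{implsmshaa}. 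Finally, since $r$ is a coalgebra map we have $\varepsilon_A \circ r = \varepsilon_H$, so \equref{2} reduces to $v(hg) = v(h)\,v(g)$; together with $v$ being a unitary coalgebra map this makes $v$ a Hopf algebra isomorphism, giving all the data required in $(2)$.

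For the converse I would simply verify that the data in $(2)$ satisfy \equref{0}--\equref{4} and then invoke \thref{clasif1} once more. Cocentrality of $r$ yields \equref{0} upon applying ${\rm Id} \ot v$; condition \equref{implsmshaa} is \equref{1}; multiplicativity of $v$ combined with triviality of $\triangleleft'$ (and $\varepsilon_A \circ r = \varepsilon_H$) gives \equref{2}; triviality of $\triangleleft$ turns \equref{implsmsh} into \equref{3}; and with both $\triangleleft$ and $\triangleleft'$ trivial, \equref{4} is the trivial identity $v(\varepsilon_A(a)\,h) = \varepsilon_A(a)\,v(h)$. Thus \thref{clasif1}$(1)$ produces a left $A$-linear Hopf algebra morphism $\psi$ given by \equref{psi2a}, and since $v$ is bijective, part $(2)$ of \thref{clasif1} upgrades it to an isomorphism, proving $(1)$.

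I expect the only genuinely substantive point to be the role of \equref{4}: it is this condition, once $v$ is known to be invertible, that forces the right action $\triangleleft$ to be trivial---the real Schur--Zassenhaus content of the statement. Everything else is a routine specialization of the general classification \thref{clasif1} to the degenerate case of a trivial second right action.
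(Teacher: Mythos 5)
Your proposal is correct and follows essentially the same route as the paper: both apply \thref{clasif1} to the matched pair $(A, H', \triangleright', \triangleleft')$ with $\triangleleft'$ trivial, deduce triviality of $\triangleleft$ from \equref{4} via bijectivity of $v$, and read \equref{implsmsh} and \equref{implsmshaa} off from \equref{3} and \equref{1}. Your write-up is in fact more complete than the paper's terse proof, since you also make explicit the cocentrality of $r$ (from \equref{0}), the multiplicativity of $v$ (from \equref{2} together with $\varepsilon_A \circ r = \varepsilon_H$), and the verification of the converse implication.
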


\begin{proof}
We apply \thref{clasif1} for the matched pair $(A, H',
\triangleright', \triangleleft')$, where $\triangleleft'$ is the
trivial action. Using the fact that $v$ is bijective, we obtain
from the compatibility condition \equref{4} that the right action
$\triangleleft$ is also the trivial action. On the other hand
\equref{3} takes the equivalent form \equref{implsmsh} using that
$r$ is invertible in the convolution with the inverse $S_A \circ
r$.
\end{proof}

As a special case of \thref{clasif1} we have the following
interesting result:

\begin{corollary}\colabel{clasforms1}
Let $A$, $H$, $H'$ be three Hopf algebras. Then there exists a
bijection between the set of all left $A$-linear Hopf algebra
isomorphisms $\psi : A \ot H \to A \ot H' $ and the set of all
pairs $(r, v)$, where $v: H \rightarrow H'$ is an isomorphism of
Hopf algebras, $r: H \rightarrow A$ is a unitary cocentral map and
a morphism of Hopf algebras  with ${\rm Im} (r) \subset Z(A)$, the
center of $A$.

Under the above bijection the left $A$-linear Hopf algebra
isomorphism $\psi : A \ot H \to A \ot H'$ corresponding to $(r,
v)$ is given by:
\begin{equation}\eqlabel{psi2forms}
\psi(a \ot h) = a \, r(h_{(1)}) \, \ot \, v(h_{(2)})
\end{equation}
for all $a\in A$ and $h\in H$.
\end{corollary}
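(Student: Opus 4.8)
The plan is to derive this corollary as the degenerate case of \thref{clasif1} in which both matched pairs are trivial. When $\triangleleft,\triangleright,\triangleleft',\triangleright'$ are all trivial actions, the bicrossed multiplication \equref{0010} reduces to $(a\bowtie h)(c\bowtie g)=ac\bowtie hg$, so that $A\bowtie H=A\ot H$ and $A\bowtie' H'=A\ot H'$ as Hopf algebras. \thref{clasif1}~$(1)$ then already gives a bijection between the left $A$-linear Hopf algebra morphisms $A\ot H\to A\ot H'$ and the pairs $(r,v)$ of unitary coalgebra maps satisfying \equref{0}--\equref{4}, the morphism attached to $(r,v)$ being \equref{psi2a}, which in this case is exactly \equref{psi2forms}; and \thref{clasif1}~$(2)$ says that such a $\psi$ is an isomorphism precisely when $v$ is bijective. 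Hence the whole task is to rewrite \equref{0}--\equref{4} for trivial actions and read off that, together with the bijectivity of $v$, they are equivalent to the stated conditions on $(r,v)$.

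First I would clear away the formal reductions, repeatedly using that $v$ and $r$ are unitary coalgebra maps, so $\varepsilon_{H'}\circ v=\varepsilon_H$ and $\varepsilon_A\circ r=\varepsilon_H$. The triviality of $\triangleright'$ collapses \equref{1} to $r(hg)=r(h)r(g)$ and that of $\triangleleft'$ collapses \equref{2} to $v(hg)=v(h)v(g)$; since both maps are already unitary coalgebra maps they become bialgebra, hence Hopf algebra, maps, and with $v$ bijective $v$ is a Hopf algebra isomorphism. Condition \equref{4} becomes $\varepsilon_A(a)v(h)=\varepsilon_A(a)v(h)$ and so is automatic. For \equref{0}, applying ${\rm Id}_A\ot v^{-1}$ and using the injectivity of $v$ shows it is equivalent to the cocentrality \equref{0aa} of $r$.

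The only substantive step, which I expect to be the main obstacle, is \equref{3}. For trivial actions its right-hand side collapses to $r(h_{(1)})\,a\,S_A(r(h_{(2)}))$, so \equref{3} reads $\varepsilon_H(h)\,a=r(h_{(1)})\,a\,S_A(r(h_{(2)}))$ for all $h\in H$ and $a\in A$. Putting $b:=r(h)$ and using that $r$ is a coalgebra map, so $r(h_{(1)})\ot r(h_{(2)})=\Delta_A(b)$ and $\varepsilon_A(b)=\varepsilon_H(h)$, this says exactly that the adjoint action $b_{(1)}\,a\,S_A(b_{(2)})=\varepsilon_A(b)\,a$ of each $b\in\im(r)$ on $A$ is trivial; I claim this is equivalent to $\im(r)\subseteq Z(A)$. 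If $b$ is central then $b_{(1)}\,a\,S_A(b_{(2)})=a\,b_{(1)}S_A(b_{(2)})=\varepsilon_A(b)\,a$. Conversely, the antipode axioms give $ba=b_{(1)}\,a\,S_A(b_{(2)})\,b_{(3)}$, and applying the adjoint-triviality to the first two legs of the threefold coproduct turns this into $\varepsilon_A(b_{(1)})\,a\,b_{(2)}=ab$, whence $b\in Z(A)$. This identifies \equref{3} with the centrality of $\im(r)$, and collecting the four resulting properties of $r$ (unitary, cocentral, Hopf algebra map, central image) together with $v$ being a Hopf algebra isomorphism yields the asserted bijection, with $\psi$ given by \equref{psi2forms}.
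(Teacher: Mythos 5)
Your proposal is correct and takes essentially the same route as the paper: both specialize \thref{clasif1} to the case where all four actions are trivial, so that \equref{1} and \equref{2} turn $r$ and $v$ into (bialgebra, hence) Hopf algebra maps, \equref{0} becomes cocentrality once $v$ is invertible, \equref{4} is vacuous, and \equref{3} is equivalent to ${\rm Im}(r) \subseteq Z(A)$ --- an equivalence the paper merely asserts and you prove in detail. One caution on your forward implication: for a single central element $b$ of a Hopf algebra the adjoint action $b_{(1)} \, a \, S_A(b_{(2)})$ need \emph{not} be trivial (centrality of $b$ does not make $b_{(1)}$ central), so your step of moving $a$ past $b_{(1)}$ is valid here only because $r$ is a coalgebra map, whence $\Delta_A(r(h)) = r(h_{(1)}) \ot r(h_{(2)})$ has both legs in ${\rm Im}(r) \subseteq Z(A)$.
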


\begin{proof} We consider $\triangleleft$, $\triangleleft'$, $\triangleright$ and $\triangleright'$
to be all the trivial actions in \thref{clasif1}. The
compatibility conditions \equref{1}-\equref{3} implies in this
case that $r$ and $v$ are also algebra maps, while \equref{3}
becomes
$$
r(h_{(1)}) \, a \, (S_A \circ r ) (h_{(2)}) = \varepsilon_H(h) a
$$
for all $a\in A$, $h\in H$ which is equivalent to the centralizing
condition $r (h) a = a r(h)$, for all $a\in A$ and $h\in H$.
\end{proof}

\begin{corollary}\colabel{clasforms2}
Let $A$, $H$ and $H'$ be three Hopf algebras. Then $H$ and $H'$
are isomorphic as Hopf algebras if and only if there exists a left
$A$-linear Hopf algebra isomorphism $A \ot H \cong A \ot H'$.
\end{corollary}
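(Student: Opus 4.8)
The plan is to deduce both implications directly from \coref{clasforms1}, which already establishes a bijection between left $A$-linear Hopf algebra isomorphisms $A \ot H \to A \ot H'$ and pairs $(r, v)$ consisting of a Hopf algebra isomorphism $v : H \to H'$ together with a unitary cocentral map $r : H \to A$ that is also a morphism of Hopf algebras with $\im r \subseteq Z(A)$. Thus essentially all the work has already been carried out in that corollary; what remains is to extract the correct piece of data in each direction.

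For the "if" direction, I would suppose that $\psi : A \ot H \to A \ot H'$ is a left $A$-linear Hopf algebra isomorphism. By \coref{clasforms1} it is necessarily of the form \equref{psi2forms} for a unique pair $(r, v)$, and the first component of that datum is a Hopf algebra isomorphism $v : H \to H'$. Reading off $v$ immediately yields $H \cong H'$ as Hopf algebras, so this direction is a one-line consequence.

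For the converse, I would start from a Hopf algebra isomorphism $v : H \to H'$ and pair it with the trivial map $r : H \to A$, $r(h) = \varepsilon_H(h) 1_A$. One checks that $r$ is unitary and a morphism of Hopf algebras, that the cocentrality \equref{0aa} holds since both $r(h_{(1)}) \ot h_{(2)}$ and $r(h_{(2)}) \ot h_{(1)}$ equal $1_A \ot h$, and that $\im r = k\,1_A \subseteq Z(A)$. Hence $(r, v)$ satisfies the hypotheses of \coref{clasforms1}, which then produces a left $A$-linear Hopf algebra isomorphism; its explicit formula \equref{psi2forms} collapses to $\psi(a \ot h) = a\, r(h_{(1)}) \ot v(h_{(2)}) = a \ot v(h)$, i.e. $\psi = \Id_A \ot v$, exactly as one expects. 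Since both directions reduce to \coref{clasforms1}, there is no genuine obstacle here; the only point requiring a (trivial) verification is that the trivial cocentral map meets the three conditions imposed in that corollary, which it does precisely because its image is spanned by the central unit $1_A$.
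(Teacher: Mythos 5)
Your proposal is correct and matches the paper's argument: the paper likewise extracts the Hopf algebra isomorphism $v : H \to H'$ from \coref{clasforms1} in one direction, and in the other direction uses the map $\Id_A \ot f$, which is exactly what your pair $(r,v)$ with trivial $r$ collapses to under \equref{psi2forms}. The only cosmetic difference is that the paper verifies $\Id_A \ot f$ directly rather than routing it through the bijection of \coref{clasforms1}, but the resulting isomorphism is identical.
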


\begin{proof}
If $f: H \to H'$ is an isomorphism of Hopf algebras, then ${\rm
Id}_A \ot f : A \ot H \to A \ot H'$ is a left $A$-linear Hopf
algebra isomorphism. Conversely, if $\psi : A \ot H \to A \ot H'$
is a left $A$-linear Hopf algebra isomorphism then the map $v: H
\to H'$ constructed in the proof of \coref{clasforms1} is in fact
an isomorphism of Hopf algebras.
\end{proof}

\section{Examples}\selabel{secexemple}
This section is devoted to the construction of some explicit
examples: for two given Hopf algebras $A$ and $H$ we will describe
and classify all Hopf algebras $E$ that factorize through $A$ and
$H$. There are three steps that we have to go through: first of
all we have to compute the set of all matched pairs between $A$
and $H$. Then we have to describe by generators and relations all
bicrossed products $A \bowtie H$ associated to these matched
pairs. Finally, using \thref{toatemorf}, we shall classify up to
an isomorphism the bicrossed products $A \bowtie H$. As an
application, the group $\Aut _{\rm Hopf} (A \bowtie H)$ of all
Hopf algebra automorphisms of a given bicrossed product is
computed.

For a Hopf algebra $H$, $G(H)$ is the set of group-like elements
of $H$ and for $g$, $h \in G(H)$ we denote by $P_{g, h} (H)$ the
set of all $(g, h)$-primitive elements, that is
$$
P_{g, \, h} (H) = \{ x\in H \, | \, \Delta_H(x) = x \ot g + h \ot
x\}
$$

The following result is very useful in computing all matched pairs
between $A$ and $H$:

\begin{lemma}\lelabel{primitive}
Let $(A, H, \triangleleft, \triangleright)$ be a matched pair of
Hopf algebras, $a$, $b \in G(A)$ and $g$, $h\in G(H)$. Then:

$(1)$ $g \triangleright a \in G(A)$ and $g\triangleleft a \in
G(H)$;

$(2)$ If $x \in P_{a, \, b}(A)$, then $g \triangleleft x \in
P_{g\triangleleft a, \, g\triangleleft b} (H)$ and $g
\triangleright x \in P_{g\triangleright a, \, g\triangleright b}
(A)$;

$(3)$ If $y \in P_{g, \, h}(H)$, then $y \triangleleft a \in
P_{g\triangleleft a, \, h\triangleleft a} (H)$ and $y
\triangleright a \in P_{g\triangleright a, \, h\triangleright a}
(A)$.

In particular, if $x$ is an $(1_A, b)$-primitive element of $A$,
then $g \triangleright x$ is an $(1_A, g\triangleright
b)$-primitive element of $A$ and $g \triangleleft x$ is an $(g,
g\triangleleft b)$-primitive element of $H$.
\end{lemma}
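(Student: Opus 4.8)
The plan is to derive every assertion directly from the fact that $\triangleleft$ and $\triangleright$ are coalgebra maps, that is, from \equref{6} and \equref{8}, by feeding in the comultiplication formulas that characterise group-like and $(g,h)$-primitive elements. None of the multiplicativity axioms \equref{mp2}--\equref{mp4} are needed; only the coalgebra-map conditions, together with \equref{mp1} and the module unit axioms, enter the argument.

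For $(1)$ I would begin from \equref{6}: since $g$ and $a$ are group-like, $g_{(1)}\ot g_{(2)} = g\ot g$ and $a_{(1)}\ot a_{(2)} = a\ot a$, so that
\begin{equation*}
\Delta_{H}(g\triangleleft a) = g_{(1)}\triangleleft a_{(1)} \ot g_{(2)}\triangleleft a_{(2)} = (g\triangleleft a)\ot(g\triangleleft a),
\end{equation*}
while the counit condition in \equref{6} gives $\varepsilon_{H}(g)\varepsilon_{A}(a)=1$; hence $g\triangleleft a\in G(H)$. The identical computation with \equref{8} gives $g\triangleright a\in G(A)$.

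For $(2)$ and $(3)$ the only change is which tensor slot carries the primitive comultiplication. For $(2)$ I take $x\in P_{a,b}(A)$, so that $\Delta_{A}(x)=x\ot a + b\ot x$, and keep $g$ group-like; substituting into \equref{6} yields
\begin{equation*}
\Delta_{H}(g\triangleleft x) = (g\triangleleft x) \ot (g\triangleleft a) + (g\triangleleft b)\ot (g\triangleleft x),
\end{equation*}
which is exactly $g\triangleleft x\in P_{g\triangleleft a,\,g\triangleleft b}(H)$, and using \equref{8} instead gives $g\triangleright x\in P_{g\triangleright a,\,g\triangleright b}(A)$. For $(3)$ the roles are reversed: with $y\in P_{g,h}(H)$ I would expand $\Delta_{H}(y)=y\ot g + h\ot y$ in the first factor of \equref{6} while keeping $a$ group-like in the second, obtaining $y\triangleleft a\in P_{g\triangleleft a,\,h\triangleleft a}(H)$, and symmetrically $y\triangleright a\in P_{g\triangleright a,\,h\triangleright a}(A)$ from \equref{8}. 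Since the set $P_{g,h}$ records only the comultiplication condition, no counit check is required.

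The single place where the matched-pair structure beyond the coalgebra-map axioms is needed is the final ``In particular'' assertion. Specialising $(2)$ to $a=1_{A}$ gives $g\triangleright x\in P_{g\triangleright 1_{A},\,g\triangleright b}(A)$ and $g\triangleleft x\in P_{g\triangleleft 1_{A},\,g\triangleleft b}(H)$; to identify the base points I would invoke $g\triangleright 1_{A} = \varepsilon_{H}(g)1_{A} = 1_{A}$ from \equref{mp1} (using $\varepsilon_{H}(g)=1$) and $g\triangleleft 1_{A} = g$ from the right $A$-module unit axiom, which yields the claimed $(1_{A}, g\triangleright b)$- and $(g, g\triangleleft b)$-primitivity. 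I expect no genuine obstacle here: the whole proof is bookkeeping in Sweedler notation, and the only point demanding care is tracking which of the two tensor factors in \equref{6} and \equref{8} is expanded by the group-like, respectively primitive, comultiplication.
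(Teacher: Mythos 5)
Your proposal is correct and follows exactly the paper's argument: the paper likewise proves $(1)$--$(3)$ using only the coalgebra-map conditions \equref{6} and \equref{8}, and invokes the normalizing/unit conditions \equref{mp1} only for the final ``in particular'' statement. You have simply written out in full the Sweedler-notation computations that the paper declares ``straightforward,'' including the correct identification $g\triangleright 1_A=1_A$ and $g\triangleleft 1_A=g$.
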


\begin{proof}
Straightforward: in fact for $(1)$-$(3)$ we only use the fact that
$\triangleleft$ and $\triangleright$ are coalgebra maps, i.e.
\equref{6} and \equref{8} hold. For the final statement we use the
normalizing conditions \equref{mp1} for $(\triangleleft,
\triangleright)$.
\end{proof}

From now on, $C_n$ will be the cyclic group of order $n$ generated
by $c$ and $k$ will be a field of characteristic $\neq 2$. Let $A
:= H_{4}$ be the Sweedler's $4$-dimensional Hopf algebra having
$\{1, \, g, \, x, \, gx \}$ as a basis subject to the relations:
$$
g^{2} = 1, \quad x^{2} = 0, \quad x g = -g x
$$
with the coalgebra structure and antipode given by:
$$
\Delta(g) = g \otimes g, \quad \Delta(x) = x \otimes 1 + g \otimes
x, \quad \Delta(gx) = gx \ot g + 1 \otimes gx
$$
$$\varepsilon(g) = 1, \quad \varepsilon(x) = 0, \quad S(g) = g,
\quad S(x) = -gx
$$

In order to compute the set of all matched pairs between $H_4$ and
$k[C_n]$ we need the following elementary result.

\begin{lemma}\lelabel{primitsw}
Let $k$ be a field of characteristic $\neq 2$,  $H_4$ the
Sweedler's $4$-dimensional Hopf algebra and $k[C_n]$ the group
algebra of $C_n$. Then:
$$
P_{c^i, \, c^j} (k[C_{n}]) = \{\lambda c^i - \lambda c^j \, | \,
\lambda \in k\}
$$
$$
G (H_4) = \{1, \, g \}, \quad P_{1, \, 1} (H_4) = \{0\}, \quad
P_{g, \, g} (H_4) = \{0\}
$$
$$
P_{1, \, g} (H_4) = \{ \alpha - \alpha g + \beta \, x \,\,\, | \,
\alpha, \beta \in k \}, \qquad  P_{g, \, 1} (H_4) = \{ \alpha -
\alpha g + \beta \, g x \, | \, \alpha, \beta \in k \}
$$
for all $i$, $j = 0, 1, \cdots, n-1$.
\end{lemma}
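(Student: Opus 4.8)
The plan is to reduce every assertion to a finite linear-algebra computation, exploiting that both $k[C_n]$ and $H_4$ come equipped with an explicit $k$-basis on which the comultiplication is known. In each case I would expand an unknown element in that basis, substitute into the defining equation (either $\Delta(y) = y \otimes y$ for a group-like, or $\Delta(y) = y \otimes g + h \otimes y$ for a $(g,h)$-primitive), and then compare coefficients on the induced basis of the tensor square. Since the tensor products of basis vectors form a basis of $H \otimes H$, matching coefficients is legitimate and converts each claim into a small system of scalar equations whose solution set I read off directly.

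For $k[C_n]$ I would write $y = \sum_{t=0}^{n-1} a_t c^t$ and impose $\Delta(y) = y \otimes c^i + c^j \otimes y$, using $\Delta(c^t) = c^t \otimes c^t$. Comparing the coefficient of $c^s \otimes c^t$ gives $a_s \delta_{s,t} = a_s \delta_{t,i} + a_t \delta_{s,j}$. The diagonal part ($s=t$) forces $a_s = 0$ for every $s \notin \{i,j\}$, while the off-diagonal entry at $(s,t)=(j,i)$ forces $a_i + a_j = 0$; putting $a_i = \lambda$ then yields exactly $y = \lambda c^i - \lambda c^j$. When $i = j$ the diagonal equation becomes $a_i = 2a_i$, and it is precisely here that $\mathrm{Char}(k) \neq 2$ is used to force $a_i = 0$, recovering $P_{c^i,c^i}(k[C_n]) = \{0\}$ as the degenerate case of the stated formula.

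For $H_4$ I would take $y = a + bg + cx + dgx$ throughout. The group-like condition $\Delta(y) = y \otimes y$ is the only nonlinear one, but it collapses at once: the coefficients of $x \otimes x$ and $gx \otimes gx$ in $y \otimes y$ are $c^2$ and $d^2$, neither of which occurs in $\Delta(y)$, so $c = d = 0$; the surviving equations give $a, b \in \{0,1\}$ with $ab = 0$, whence (discarding the zero solution) $G(H_4) = \{1, g\}$. The primitive spaces are then pure linear systems: imposing $\Delta(y) = y \otimes 1 + g \otimes y$ produces $b = -a$, $d = 0$ with $c$ free, i.e. $P_{1,g}(H_4) = \{\alpha - \alpha g + \beta x\}$, while the two ``diagonal'' cases $P_{1,1}$ and $P_{g,g}$ reduce to $\{0\}$ because their $1 \otimes 1$ (resp. $g \otimes g$) coefficient yields $a = 2a$ (resp. $b = 2b$), again invoking $\mathrm{Char}(k) \neq 2$. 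Rather than repeat the computation for $P_{g,1}$, I would deduce it from $P_{1,g}$ via the antipode: since $S$ is a bijective coalgebra anti-morphism with $S(1) = 1$ and $S(g) = g$, it identifies $(1,g)$-primitives with $(g,1)$-primitives, and $S(\alpha - \alpha g + \beta x) = \alpha - \alpha g - \beta gx$ sweeps out exactly the claimed set.

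There is no genuine conceptual obstacle here; the work is entirely bookkeeping. The only points requiring care are keeping track of which monomials $p \otimes q$ actually appear in $\Delta$ of each basis vector (so that the ``absent'' coordinates correctly produce homogeneous constraints), and flagging each place where $\mathrm{Char}(k) \neq 2$ is genuinely needed, namely the degenerate primitives $P_{c^i,c^i}$, $P_{1,1}$ and $P_{g,g}$.
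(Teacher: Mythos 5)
Your proof is correct and follows essentially the same route as the paper's: the paper also argues by expanding an unknown element in the natural basis and matching coefficients (phrased there as applying dual-basis functionals ${\rm Id} \otimes (c^t)^*$ to the primitivity equation), and it only spells out the $P_{c^i,\,c^j}(k[C_n])$ case, leaving the $H_4$ computations as routine; your antipode argument for $P_{g,\,1}(H_4)$ is a pleasant shortcut over redoing that last linear system, and it is legitimate since $S$ is a bijective coalgebra anti-morphism fixing $1$ and $g$. One remark in your write-up is inaccurate, although it creates no gap: an equation of the form $a = 2a$ forces $a = 0$ over \emph{any} field (subtract $a$ from both sides; in characteristic $2$ it even reads $a=0$ outright), so the hypothesis ${\rm Char}(k) \neq 2$ is not in fact what rescues the degenerate cases $P_{c^i,\,c^i}$, $P_{1,\,1}$ and $P_{g,\,g}$ --- it appears in the lemma only because the Sweedler algebra $H_4$ is being considered over such fields. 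The conclusions you draw at those points are nevertheless correct as stated.
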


\begin{proof}
Everything is just a straightforward computation. For example, let
$z = \lambda_0 + \lambda_1c + \lambda_2 c^2 + \cdots +
\lambda_{n-1} c^{n-1} \in P_{c^i, \, c^j} (k[C_{n}])$, for some
$\lambda_j \in k$. Let $t \neq i$, $t \neq j$ and apply ${\rm Id}
\ot (c^t)^*$ in $\Delta (z) = z \ot c^i + c^j \ot z$ (where
$(c^t)^*_{t = 0, \cdots, n-1}$ is the dual basis of $({c^t})_{t =
0, \cdots, n-1}$ we obtain $\lambda_t c^t = \lambda_t c^j$, thus
$\lambda_t = 0$. Hence, $z = \lambda_i c^i + \lambda_j c^j$. Now
using $\Delta (z) = z \ot c^i + c^j \ot z$ we obtain that
$\lambda_i + \lambda_j = 0$, and thus $z = \lambda c^i - \lambda
c^j$, for some $\lambda \in k$.
\end{proof}

For a positive integer $n$ let $U_n (k) = \{ \omega \in k \, | \,
\omega^n = 1 \}$ be the cyclic group of $n$-th roots of unity in
$k$. The group $U_n (k)$ parameterizes the set of all matched
pairs $(H_4, k[C_n], \triangleleft, \triangleright)$.

\begin{proposition} \prlabel{mapex1}
Let $k$ be a field of characteristic $\neq 2$, $n$ a positive
integer and $C_n$ the cyclic group of order $n$. Then there exists
a bijective correspondence between the set of all matched pairs
$(H_4, k[C_n], \triangleleft, \triangleright)$ and $U_n (k)$ such
that the matched pair $(\triangleleft, \triangleright)$
corresponding to an $n$-th root of unity $\omega \in U_n (k)$ is
given by:
\begin{center}
\begin{tabular} {l | c  c  c  c  }
$\triangleleft$ & 1 & $g$ & $x$ & $gx$\\
\hline 1 & 1 & 1 & 0 & 0\\
c & c & c & 0 & 0 \\
\vdots & \vdots & \vdots & \vdots & \vdots\\
$c^k$ & $c^k$ & $c^k$ & 0 & 0\\
\vdots & \vdots & \vdots & \vdots & \vdots\\
$c^{n-1}$ & $c^{n-1}$ & $c^{n-1}$ & 0 & 0\\
\end{tabular}\, \qquad
\begin{tabular} {l | c  c  c  c  }
$\triangleright$ & 1 & $g$ & $x$ & $gx$\\
\hline 1 & 1 & g & x & gx\\
c & 1 & g & $\omega x$ & $\omega gx$ \\
\vdots & \vdots & \vdots & \vdots & \vdots\\
$c^k$ & 1 & g & $\omega^kx$ & $\omega^kgx$\\
\vdots & \vdots & \vdots & \vdots & \vdots\\
$c^{n-1}$ & 1 & g & $\omega^{n-1}x$ & $\omega^{n-1}gx$\\
\end{tabular}
\end{center}
\end{proposition}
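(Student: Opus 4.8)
The plan is to determine both actions on the basis $\{1, g, x, gx\}$ of $H_4$ one orbit at a time, starting from the group-like elements and then passing to the primitives, using \leref{primitive} and \leref{primitsw} to restrict the possible values and the matched pair axioms \equref{mp1}--\equref{mp4} to pin them down. In the end everything will be governed by the single scalar $\omega$ by which $c$ scales $x$.

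First I would settle the group-likes. The normalizing conditions \equref{mp1} give $c^i \triangleright 1 = 1$ and $c^i \triangleleft 1 = c^i$ for all $i$. By \leref{primitive}$(1)$ the map $a \mapsto c \triangleright a$ carries $G(H_4) = \{1, g\}$ into itself and is bijective with inverse $a \mapsto c^{n-1} \triangleright a$; since it fixes $1$ it must fix $g$, so $c^i \triangleright g = g$ for all $i$. Again by \leref{primitive}$(1)$, $c^i \triangleleft g$ is group-like, so $c^i \triangleleft g = c^{\sigma(i)}$ for some function $\sigma$. Writing $c \triangleright x = \alpha(1-g) + \beta x \in P_{1,g}(H_4)$ via \leref{primitive}$(2)$ and \leref{primitsw}, and iterating the module axiom (using the group-like values just found), I obtain $c^i \triangleright x = \bigl(\alpha \sum_{j<i}\beta^{\,j}\bigr)(1-g) + \beta^i x$; the relation $c^n \triangleright x = x$ then forces $\beta^n = 1$, so $\beta \in U_n(k)$ and in particular $\beta \neq 0$.

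The crucial step, which I expect to be the main obstacle, is to show that $\triangleleft$ is trivial. I would feed $h = c^i$ and $a = x$ into the symmetry axiom \equref{mp4}; because $k[C_n]$ is cocommutative the two sides differ only through the $\triangleleft$-values, and comparing the components lying in $k[C_n] \otimes kx$ yields $\beta^i\bigl(c^{\sigma(i)} - c^i\bigr) = 0$. Since $\beta \neq 0$ this gives $\sigma(i) = i$, i.e. $c^i \triangleleft g = c^i$. With $c^i \triangleleft 1 = c^i \triangleleft g = c^i$, \leref{primitive}$(2)$ places both $c^i \triangleleft x$ and $c^i \triangleleft gx$ in $P_{c^i, c^i}(k[C_n]) = \{0\}$, so they vanish; hence $c^i \triangleleft a = \varepsilon_{H_4}(a)\, c^i$ and $\triangleleft$ is the trivial action, which is exactly the left-hand table.

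Once $\triangleleft$ is trivial, $(1)$ of \exref{exempleban} reduces the matched pair conditions to the single requirement that $(H_4, \triangleright)$ be a left $k[C_n]$-module algebra, the extra compatibility \equref{smash1} holding automatically by cocommutativity. As $c$ is a group-like generator, this amounts to giving a Hopf algebra automorphism $\phi := c \triangleright (-)$ of $H_4$ with $\phi^n = {\rm Id}$. It then remains to classify ${\rm Aut}_{\rm Hopf}(H_4)$: such a $\phi$ fixes $g$ and sends $x$ into $P_{1,g}(H_4)$, say $\phi(x) = \alpha(1-g) + \beta x$, and imposing that $\phi$ respect $xg = -gx$ gives $2\alpha(g-1) = 0$, whence $\alpha = 0$ since $\mathrm{char}(k) \neq 2$ (this is exactly where the hypothesis on the characteristic enters). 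Thus $\phi = \phi_\omega$ with $\phi_\omega(g) = g$, $\phi_\omega(x) = \omega x$ for a unique $\omega \in k^{\times}$, and $\phi^n = {\rm Id}$ is equivalent to $\omega \in U_n(k)$. Reading off $c^k \triangleright x = \phi^k(x) = \omega^k x$ and $c^k \triangleright gx = \omega^k gx$ recovers the right-hand table, the assignment $(\triangleleft, \triangleright) \mapsto \omega$ is the claimed bijection, and conversely each $\omega \in U_n(k)$ produces through $\phi_\omega$ a genuine matched pair, supplying the inverse map.
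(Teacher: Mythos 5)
Your proposal is correct, and its core follows the same path as the paper's proof: the normalization conditions, the fact that $c \triangleright g = g$, the shape $c \triangleright x = \alpha(1-g) + \beta x$ with $\beta^n = 1$ obtained from \leref{primitive} and \leref{primitsw}, and above all the decisive step --- feeding $(c^i, x)$ into \equref{mp4} and comparing components in $k[C_n] \ot kx$, where $\beta \neq 0$ forces $c^i \triangleleft g = c^i$, after which $P_{c^i, \, c^i}(k[C_n]) = \{0\}$ kills $c^i \triangleleft x$ and $c^i \triangleleft gx$ --- is exactly the paper's argument. Where you genuinely diverge is the endgame. The paper keeps computing with the simplified matched-pair axioms: it carries separate unknowns for $c \triangleright (gx)$, identifies them using $c \triangleright(gx) = (c \triangleright g)(c \triangleright x)$, eliminates the $(1-g)$-component of $c \triangleright x$ via $0 = (c \triangleright x)^2$ (its point of entry for ${\rm char}(k) \neq 2$; yours is the relation $xg = -gx$, and both work), and then leaves the converse verification as an omitted routine check. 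You instead factor the endgame through \exref{exempleban}(1): once $\triangleleft$ is trivial and since $k[C_n]$ is cocommutative, matched pairs are precisely left $k[C_n]$-module algebra and coalgebra structures on $H_4$, i.e. Hopf algebra automorphisms $\phi$ of $H_4$ with $\phi^n = {\rm Id}$, and classifying those (you re-derive $\Aut_{\rm Hopf}(H_4) \cong k^*$, which is part $(2)$ of \leref{morf_H_4k[C_n]} --- proved later and independently in the paper, so there is no circularity) yields the bijection with $U_n(k)$. Your route buys a cleaner converse: each $\omega \in U_n(k)$ produces a matched pair with nothing left to check, because $\phi_\omega$ is visibly a Hopf automorphism of order dividing $n$; the paper's route stays self-contained at the level of the raw axioms, at the price of that final omitted verification.
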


\begin{proof}
Let $(H_4, k[C_n], \triangleleft, \triangleright)$ be a matched
pair. It follows from the normalizing conditions \equref{mp1} that
\begin{equation*}
c^i \triangleright 1 = 1, \quad 1 \triangleright z = z, \quad 1
\triangleleft z = \varepsilon (z) 1, \quad c^i \triangleleft 1 =
c^i
\end{equation*}
for all $i = 0, \cdots, n-1$ and $z \in H_4$. We prove now that $
c \triangleright g = g$. Indeed, it follows from \leref{primitive}
that $c \triangleright g$ is a group-like element in $H_4$. Thus,
using \leref{primitsw} we obtain that $c \triangleright g \in \{1,
\, g\}$. If $c \triangleright g = 1$ using the fact that
$\triangleright$ is a left action we obtain $c^i \triangleright g
= 1$, for any positive integer $i$. In particular, $g = 1
\triangleright g = c^n \triangleright g = 1$, contradiction. Thus,
$c \triangleright g = g$ and hence $c^i \triangleright g = g$, for
all $i = 1, \cdots, n-1$.

Now, $x$ is an $(1, g)$-primitive element of $H_4$. As
$c\triangleright g = g$ we obtain using \leref{primitive} that $c
\triangleright x$ is an $(1, g)$-primitive element of $H_4$. It
follows from \leref{primitsw} that $c\triangleright x = \beta -
\beta g + \omega \, x$, for some $\beta$, $\omega \in k$. We will
first prove that $\omega^n = 1$ and later on that $\beta = 0$.
Indeed, by induction, having in mind that $\triangleright$ is a
left action we obtain:
$$
c^i \triangleright x = \beta \sum_{j=0}^{i-1} \omega^j -
\beta(\sum_{j=0}^{i-1}\omega^j ) \, g + \omega^i \, x
$$
for any positive integer $i$. In particular,
$$
x = 1\triangleright x = c^n\triangleright x = \beta
\sum_{j=0}^{n-1} \omega^j - \beta(\sum_{j=0}^{n-1} \omega^j ) g +
\omega^n x
$$
thus $\omega^n = 1$ and $\beta \sum_{j=0}^{n-1} \omega^j = 0$.
Similarly, we obtain that for any $i = 1, \cdots, n-1$ we have:
$$
c^i \triangleright(gx) = \gamma \sum_{j=0}^{i-1}\zeta^j - \gamma
(\sum_{j=0}^{i-1}\zeta^j ) \, g + \zeta^i \, gx
$$
for some $\gamma$, $\zeta \in k$, such that $\zeta^n=1$ and
$\gamma \sum_{j=0}^{n-1}\zeta^j=0$. At the end of the proof we
will see that $\gamma = \beta = 0$ and $\zeta = \omega$.

Meanwhile, using what we already know about the left action
$\triangleright$, we shall prove that the other action
$\triangleleft$ of the matched pair $(H_4, k[C_n], \triangleleft,
\triangleright)$ is necessarily the trivial action of $H_4$ on
$k[C_n]$. First of all, using \leref{primitive}, we obtain that
$c^i \triangleleft g$ is a group-like element in $k[C_n]$, hence
$c^i\triangleleft g\in \{1, c, \cdots, c^{n-1}\}$. We claim now
that $c^i \triangleleft g = c^i$, for all $i = 1, \cdots, n-1$.
Indeed, suppose that there exist $i\neq j$ such that $c^i
\triangleleft g = c^j$. Then, the compatibility condition
\equref{mp4} from the definition of a matched pair applied for
$(c^i, x)$ takes the form:
$$
c^i\triangleleft x\ot c^i\triangleright1 + c^i\triangleleft g\ot
c^i\triangleright x = c^i\triangleleft 1\ot c^i\triangleright x +
c^i\triangleleft x\ot c^i\triangleright g
$$
which is equivalent to:
\begin{eqnarray*}
c^i \triangleleft x \ot 1 + \beta \, (\sum_{t=0}^{i-1}\omega^t)\,
c^j \ot 1 - \beta \, (\sum_{t=0}^{i-1}\omega^t) \, c^{j} \ot  g +
\omega^{i} \, c^{j} \ot x = \\
\beta \, (\sum_{t=0}^{i-1}\omega^t ) \, c^{i} \ot 1 - \beta \,
(\sum_{t=0}^{i-1}\omega^t ) \, c^{i} \ot g + \omega^{i} c^{i} \ot
x + c^{i} \triangleleft x \ot g
\end{eqnarray*}
Now observe that the coefficient of $c^j\ot x$ in the left hand
side of the equality is $\omega^i$ while the coefficient of the
same element in the right hand side of the equality is $0$ if
$i\neq j$. Since $\omega^n = 1$ we reached a contradiction. Thus
$c^i \triangleleft g = c^i$, for all $i = 1, \cdots, n-1$. Now,
$c^i$ is a group like element of $k[C_n]$ and $x \in P_{1, g}
(H_4)$. Thus, using \leref{primitive} we obtain that $c^i
\triangleleft x \in P_{c^i \triangleleft 1, \, c^i\triangleleft g}
(k[C_n]) = P_{c^i, \, c^i} (k[C_n])$. Hence, it follows from
\leref{primitsw} that $c^i \triangleleft x = 0$. A similar
argument shows that $c^i\triangleleft (gx) = 0$, for all $i = 1,
\cdots, n-1$.

Thus we have proved that $\triangleleft$ is the trivial action. In
this case the compatibility conditions from the definition of a
matched pair become simpler. More precisely the compatibility
\equref{mp2} becomes $h\triangleright(ab) = \left(h_{(1)}
\triangleright a\right) \left(h_{(2)}\triangleright b\right)$, for
all $h\in k[C_{n}]$, $a$ and $b\in H_{4}$, the compatibility
\equref{mp3} is trivially fulfilled while the compatibility
\equref{mp4} becomes $h_{(1)}\ot h_{(2)}\triangleright a =
h_{(2)}\ot h_{(1)}\triangleright a$, for all $h\in k[C_{n}]$,
$a\in H_{4}$ and is also automatically satisfied since $k[C_4]$ is
cocommutative.

Now, using the compatibility condition \equref{mp2} in its
simplified form $c\triangleright(g x) = (c\triangleright
g)(c\triangleright x)$, we obtain:
$$
\gamma - \gamma g + \zeta \, gx = g (\beta - \beta g + \omega x) =
-\beta + \beta g + \omega \, gx
$$
and therefore, $\gamma = -\beta$ and $\zeta = \omega$. Moreover,
since $0 = c\triangleright x^2 = (c\triangleright x)
(c\triangleright x)$, we find that
$$
0 = (\beta-\beta g+\omega x)^2 = 2\beta^2 - 2\beta^2 g + 2
\beta\omega x
$$
As ${\rm char}(k) \neq 2$ we obtain that $\beta = 0$ and, as a
consequence, that $c^i \triangleright x = \omega^i x$ and $c^i
\triangleright(gx) = \omega^i (gx)$, for all $i = 1, \cdots, n-1$
and hence the left action $\triangleright$ is also fully
described. We are left to verify the rest of the compatibilities,
and, since this is a routinely check, we omit it.
\end{proof}

We prove now that a Hopf algebra $E$ factorizes through $H_4$ and
$k[C_n]$ if and only if $E \cong H_{4n, \, \omega}$, where $H_{4n,
\, \omega}$ is a quantum group associated to any $\omega$ an
$n$-th root of unity as described bellow.

\begin{corollary}\colabel{primaclasi}
Let $k$ be a field of characteristic $\neq 2$ and $n$ a positive
integer. Then a Hopf algebra $E$ factorizes through $H_4$ and
$k[C_n]$ if and only if $E \cong H_{4n, \, \omega}$, for some
$\omega \in U_n (k)$, where we denote by $H_{4n, \, \omega}$ the
Hopf algebra generated by $g$, $x$ and $c$ subject to the
relations:
$$
g^{2} = c^n = 1, \quad x^{2} = 0, \quad x g = -g x, \quad c g = g
c, \quad c x = \omega \, x c
$$
with the coalgebra structure and antipode given by:
$$
\Delta(g) = g \ot g, \quad \Delta(c) = c \ot c, \quad \Delta(x) =
 x \ot 1 + g \ot x
$$
$$
\varepsilon(c) = \varepsilon(g) = 1, \quad \varepsilon(x) = 0,
\quad S(c) = c^{n-1}, \quad S(g) = g, \quad S(x) = -gx
$$
$H_{4n, \, \omega}$ is a pointed non-semisimple $4n$-dimensional
Hopf algebra.
\end{corollary}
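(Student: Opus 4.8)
The plan is to reduce everything to \thref{carMaj} and \prref{mapex1}. By \thref{carMaj}, a Hopf algebra $E$ factorizes through $H_4$ and $k[C_n]$ if and only if $E \cong H_4 \bowtie k[C_n]$ for some matched pair $(H_4, k[C_n], \triangleleft, \triangleright)$, and by \prref{mapex1} these matched pairs are parametrized by $\omega \in U_n(k)$. Hence the corollary follows once I prove that the bicrossed product $H_4 \bowtie k[C_n]$ associated to the matched pair corresponding to a given $\omega$ is isomorphic, as a Hopf algebra, to $H_{4n, \, \omega}$. This turns the stated equivalence into a single explicit isomorphism computation, and the same computation will deliver the dimension claim.

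Since by \prref{mapex1} the right action $\triangleleft$ is trivial, this bicrossed product is the smash product of \exref{exempleban}, whose multiplication is \equref{smash2}. I would first define an algebra map $\phi: H_{4n, \, \omega} \to H_4 \bowtie k[C_n]$ on generators by $\phi(g) = g \bowtie 1$, $\phi(x) = x \bowtie 1$ and $\phi(c) = 1 \bowtie c$, and check that the defining relations are satisfied by these images. The relations internal to $H_4$ ($g^2 = 1$, $x^2 = 0$, $xg = -gx$) and the relation $c^n = 1$ hold because the canonical inclusions are algebra maps; the only genuinely new relations are the cross relations, which follow from a direct evaluation of \equref{smash2}: using $c \triangleright g = g$ one gets $(1 \bowtie c)(g \bowtie 1) = g \bowtie c = (g \bowtie 1)(1 \bowtie c)$, i.e. $cg = gc$, and using $c \triangleright x = \omega x$ one gets $(1 \bowtie c)(x \bowtie 1) = \omega\, x \bowtie c = \omega\,(x \bowtie 1)(1 \bowtie c)$, i.e. $cx = \omega xc$. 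Thus $\phi$ is a well-defined algebra map; that it is a coalgebra map is checked on the three generators against the tensor product coalgebra structure of $H_4 \bowtie k[C_n]$, and compatibility with the antipode is automatic for a bialgebra map between Hopf algebras.

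Next I would prove that $\phi$ is bijective by a dimension count. The defining relations of $H_{4n, \, \omega}$ allow every monomial in $g$, $x$, $c$ to be rewritten, by reordering (using $xg = -gx$, $cg = gc$, $cx = \omega xc$) and reducing exponents (using $g^2 = 1$, $x^2 = 0$, $c^n = 1$), as a combination of the $4n$ elements $\{\, g^a x^b c^i \mid a, b \in \{0,1\},\ 0 \le i \le n-1 \,\}$; hence $\dim_k H_{4n, \, \omega} \le 4n$. On the other hand $H_4 \bowtie k[C_n]$ has underlying vector space $H_4 \ot k[C_n]$, so its dimension is $4n$, and $\phi$ is surjective since its image is a subalgebra containing the three generators $g \bowtie 1$, $x \bowtie 1$, $1 \bowtie c$. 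A surjection onto a $4n$-dimensional space from a space of dimension at most $4n$ forces $\dim_k H_{4n, \, \omega} = 4n$ and $\phi$ to be an isomorphism, establishing simultaneously the factorization equivalence and the dimension claim.

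Finally, for the structural properties: $H_{4n, \, \omega}$ is pointed because, as a coalgebra, it is the tensor product $H_4 \ot k[C_n]$ of the pointed coalgebras $H_4$ (coradical $k1 \oplus kg$) and $k[C_n]$, and a tensor product of pointed coalgebras is pointed. For non-semisimplicity, I would exhibit the subspace $V = \Span\{\, g^a x c^i \mid a \in \{0,1\},\ 0 \le i \le n-1 \,\}$ and verify, using the cross relations, that $V$ is a nonzero two-sided ideal with $V^2 = 0$ (any product of two of its spanning monomials produces, after reordering, a factor $x^2 = 0$); a nonzero nilpotent ideal precludes semisimplicity. Equivalently one may just note that $H_4 \cong H_4 \bowtie 1$ sits inside as a non-semisimple Hopf subalgebra. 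I expect the only delicate point to be the bookkeeping in checking that the listed relations both hold in the bicrossed product and suffice to produce the spanning set of size $4n$; everything else is formal.
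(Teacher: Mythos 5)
Your proposal is correct and follows essentially the same route as the paper: both reduce the statement to \thref{carMaj} and \prref{mapex1} and then identify $H_{4n,\,\omega}$ with the bicrossed product $H_4 \bowtie k[C_n]$ by checking the cross relations $cg=gc$ and $cx=\omega\,xc$ via \equref{smash2}, exactly as in the paper's displayed computation. The only differences are that you make the identification fully rigorous (the explicit map $\phi$ plus the spanning/dimension count showing the presentation has dimension exactly $4n$, a point the paper passes over with ``up to canonical identification''), and you replace the paper's citations for the structural claims --- pointedness via Radford's lemma on Hopf algebras generated by group-likes and skew-primitives, and non-semisimplicity via the non-semisimple Hopf subalgebra $H_4$ --- with self-contained arguments (tensor product of pointed coalgebras is pointed; a nonzero nilpotent two-sided ideal), all of which are valid.
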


\begin{proof}
It follows from \prref{mapex1} and \thref{carMaj}. The Hopf
algebra $H_{4n, \, \omega}$ is the bicrossed product $H_4 \bowtie
k[C_n]$ associated to the matched pair given in \prref{mapex1}.
Indeed, up to canonical identification, $H_4 \bowtie k[C_n]$ is
generated as an algebra by $ g = g \bowtie 1$, $x = x \bowtie 1$
and $c = 1\bowtie c$. Hence,
$$
c \, x = (1 \bowtie c) (x \bowtie 1) = c \triangleright x_{(1)}
\bowtie c \triangleleft x_{(2)} = \omega \, x \bowtie c = \omega
\, (x \bowtie 1) (1 \bowtie c) =  \omega \, x c
$$
$H_4 \bowtie k[C_n]$ is pointed by being generated by two
group-likes and a primitive element \cite[Lemma 1]{rk} and
non-semisimple as $H_4$ is non-semisimple.
\end{proof}

\begin{remark}
A $k$-basis in $H_{4n, \, \omega}$ is given by $\{ c^i, \, g c^i,
\, x c^i, g xc^i \, | \, i = 0, \cdots, n-1 \}$. We note that
$H_{4n, \, \omega}$ is the bicrossed product $H_4 \bowtie k[C_n]$
associated to the matched pair in which the right action
$\triangleleft : k[C_n] \ot H_4 \to k[C_n]$ is the trivial action.
Thus, $H_{4n, \, \omega}$ is in fact the semi-direct product $H_4
\# k[C_n]$ associated to the left action $\triangleright : k[C_n]
\ot H_4 \to H_4$ given in \prref{mapex1}. In particular, $H_4$ is
a normal Hopf subalgebra of $H_{4n, \, \omega}$.
\end{remark}

In order to classify these Hopf algebras $H_{4n, \, \omega}$ we
still need one more elementary lemma:

\begin{lemma}\lelabel{morf_H_4k[C_n]}
Let $k$ be a field of characteristic $\neq 2$,  $H_4$ the
Sweedler's $4$-dimensional Hopf algebra and $k[C_n]$ the group
algebra of $C_n$. Then:

$(1)$ $u : H_4 \rightarrow H_4$ is a unitary coalgebra map if and
only if $u$ is the trivial morphism $u(h) = \varepsilon (h) 1$,
for all $h\in H_4$, or there exist $\alpha$, $\beta$, $\gamma$,
$\delta \in k$ such that $u(1) = 1$, $u(g) = g$, $u(x) = \alpha -
\alpha \, g + \beta \, x$, and $u(gx) = \gamma - \gamma \, g +
\delta \, gx$. In particular, $CoZ^{1} (H_4, H_4)$ is the trivial
group with only one element.

$(2)$ $u : H_4 \to H_4$ is a Hopf algebra morphism if and only if
$u$ is the trivial morphism $u(h) = \varepsilon(h)1$, for all $h
\in H_4$, or there exists $\beta \in k$ such that $u(1) = 1$,
$u(g) = g$, $u(x) = \beta \, x$, and $u(gx) = \beta \, gx$. In
particular, $\Aut_{\rm Hopf} (H_4) \cong k^*$.

$(3)$ Assume that $n$ is odd. Then:
\begin{enumerate}
\item[(3a)] $p : H_4 \rightarrow k[C_n]$ is a morphism of Hopf
algebras if and only if $p$ is the trivial morphism: $p(h) =
\varepsilon(h) 1$, for all $h\in H_4$.

\item[(3b)] $r: k[C_n] \to H_4$ is a morphism of Hopf algebras if
and only if $r$ is the trivial morphism: $r (c^i) = 1$, for all $i
= 0, \cdots, n-1$.
\end{enumerate}
$(4)$ Assume that $n = 2m$ is even. Then:
\begin{enumerate}
\item[(4a)] $p : H_4 \rightarrow k[C_n]$ is a morphism of Hopf
algebras if and only if $p$ is the trivial morphism or $p$ is
given by: $p(1) = 1$, $p(g) = c^m$, $p(x) = p(gx) = 0$.

\item[(4b)] $r: k[C_n] \to H_4$ is a morphism of Hopf algebras if
and only if $r$ is the trivial morphism or $r$ is given by $r
(c^i) = g^i$, for all $i = 0, \cdots, n-1$.
\end{enumerate}
\end{lemma}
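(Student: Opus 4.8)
The unifying strategy is that every map in the statement is in particular a \emph{unitary coalgebra map}, so throughout I would exploit two structural facts: a unitary coalgebra map sends $1$ to $1$ and group-likes to group-likes, and by \leref{primitive} it sends an $(a,b)$-primitive element to a $(\,\text{image of }a,\ \text{image of }b\,)$-primitive element. Feeding in the explicit lists of group-likes and primitives from \leref{primitsw} pins down the possible images of the generators $g$, $x$, $gx$ (respectively $c$), and the remaining algebra-map, cocentrality, or antipode constraints then cut the list down to the stated answer; in each case the converse (that the listed maps really are morphisms) is a direct verification of $\Delta$ and the defining relations.

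For $(1)$, unitarity gives $u(1)=1$ and $u(g)\in G(H_4)=\{1,g\}$. If $u(g)=1$ then $u$ carries the $(1,g)$- and $(g,1)$-primitives $x$, $gx$ into $P_{1,1}(H_4)=\{0\}$, forcing the trivial map; if $u(g)=g$ then $u(x)\in P_{1,g}(H_4)$ and $u(gx)\in P_{g,1}(H_4)$, which is exactly the four-parameter family. For $CoZ^1(H_4,H_4)$ I would impose the cocentrality condition $r(h_{(1)})\ot h_{(2)}=r(h_{(2)})\ot h_{(1)}$ on $h=x$; using $\Delta(x)=x\ot 1+g\ot x$ and comparing coefficients in the second tensor slot over the basis $\{1,g,x,gx\}$ of $H_4$ forces $r(g)=1$ and $r(x)=0$, so by the first half $r$ is trivial. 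The one point needing care here is that $H_4$ is \emph{not} cocommutative, so this cocentral equation is genuinely restrictive rather than vacuous.

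For $(2)$, a Hopf morphism is one of the coalgebra maps of $(1)$ that is also multiplicative. The map $\varepsilon(-)1$ is always an algebra (indeed Hopf) map. When $u(g)=g$, writing $u(x)=\alpha-\alpha g+\beta x$, the decisive step is $u(x)^2=u(x^2)=0$: expanding in $H_4$ via $g^2=1$, $x^2=0$, $xg=-gx$ gives $2\alpha^2-2\alpha^2 g+2\alpha\beta x$, and since $\mathrm{char}(k)\neq 2$ this forces $\alpha=0$, whence $u(x)=\beta x$ and, from $u(gx)=u(g)u(x)$, $u(gx)=\beta gx$. For $\Aut_{\rm Hopf}(H_4)$, bijectivity excludes the trivial map and forces $\beta\neq 0$; since $u_\beta\circ u_{\beta'}$ sends $x\mapsto \beta\beta' x$, the rule $u_\beta\mapsto\beta$ is a group isomorphism onto $k^*$.

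For $(3)$ and $(4)$ the arithmetic of $n$ enters, and this is where the main obstacle (such as it is) lies, namely the order-of-$g$ bookkeeping that produces the odd/even dichotomy. A Hopf map $p:H_4\to k[C_n]$ has $p(g)\in G(k[C_n])$ with $p(g)^2=p(g^2)=1$, so $p(g)$ is a square root of $1$ in $C_n$: only $1$ when $n$ is odd, but $\{1,c^m\}$ when $n=2m$. With $p(g)$ fixed, $p(x)\in P_{p(1),p(g)}(k[C_n])$ is a two-term primitive from \leref{primitsw}, and the relation $p(x)^2=0$ together with $\mathrm{char}(k)\neq 2$ forces $p(x)=p(gx)=0$; this recovers the trivial map in the odd case and additionally the map $p(g)=c^m$, $p(x)=p(gx)=0$ in the even case, which I would check directly is a Hopf morphism. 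Dually, a Hopf map $r:k[C_n]\to H_4$ is determined by $r(c)\in G(H_4)=\{1,g\}$ subject to $r(c)^n=1$; since $g^n=1$ holds exactly when $n$ is even, $r(c)=g$ (equivalently $r(c^i)=g^i$) is admissible only in the even case, while $r(c)=1$ is always admissible, which is precisely the claimed list.
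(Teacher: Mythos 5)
Your proposal is correct and follows essentially the same route as the paper's own proof: it pins down the candidate maps by tracking group-likes and primitives via \leref{primitive} and \leref{primitsw}, then cuts the list down using the relations $g^2=1$, $x^2=0$, $c^n=1$ together with $\mathrm{char}(k)\neq 2$. Your explicit computations for the cocentrality condition in $(1)$ and for $u(x)^2=0$ in $(2)$ simply fill in steps the paper dismisses as ``a little computation.''
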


\begin{proof}
$(1)$ Let $u : H_4 \rightarrow H_4$ be a unitary coalgebra map.
Then $u(g) \in G(H_4) = \{1, g\}$ and $u(x) \in P_{1, \,
u(g)}(H_4)$ respectively $u(gx) \in P_{u(g), \, 1}(H_4)$. By
applying \leref{primitsw} we arrive at the desired conclusion: if
$u(g) = 1$, then $u$ is the trivial morphism, while if $u(g) = g$,
then $u$ has the second form. A little computation shows that
among all these morphisms only the trivial one satisfies the
cocycle condition \equref{0aa}.

$(2)$ It follows from $(1)$. If $u: H_4\to H_4$ is the non-trivial
morphism, then considering the relations on the generators of
$H_4$ we obtain that $u$ is also an algebra map if and only if
$\alpha = \gamma = 0$ and $\delta = \beta$. The final assertion is
obvious.

$(3)$ and $(4)$ Let now $p : H_4 \rightarrow k[C_n]$ be a Hopf
algebra map. Then, $p(g)$ is a group-like element in $k[C_n]$,
i.e. $p(g) = c^i$, for some $i \in \{0,1,...,n-1\}$. It follows
from
$$
1 = p(g^2) = p(g)^2 = c^{2i}
$$
that $n \mid 2i$. If $n$ is odd, then $n \mid i$, hence $i = 0$
i.e. $p(g) = 1$. If $n = 2m$ is even, then $m \mid i$, hence $i =
0$ or $i =  m$. Thus, $p(g) = 1$ or $p(g) = c^m$. On the other
hand, $x$ is an $(1, g)$-primitive element of $H_4$, hence $p(x)
\in P_{1, \, p(g)}(k[C_n])$. Using \leref{primitsw} we obtain that
$p(x) = 0$ if $p(g) = 1$, and $p(x) = \lambda - \lambda c^m$ if
$p(g) = c^m$. In the last case we have:
$$
0 = p(x^2) = p(x)^2 = \left(\lambda -\lambda c^m \right)^2 =
2\lambda^2 - 2\lambda^2 c^m
$$
therefore $\lambda = 0$ i.e. $p(x) = 0$. A similar discussion
describes Hopf algebra maps $r: k[C_n] \to H_4$ and we are done.
\end{proof}

We shall give now the necessary and sufficient conditions for two
Hopf algebras $H_{4n, \, \omega}$ and $H_{4n, \, \omega'}$ to be
isomorphic. The classification \thref{2.2} bellow depends on the
structure of $U_n (k)$. We denote by $\nu (n) = |U_n (k)|$, the
order of the cyclic group $U_n (k)$ and we shall fix $\xi$ a
generator of $U_n (k)$. Let $\omega = \xi^l$, $\omega' = \xi^t$ be
two arbitrary $n$-th roots of unity, for some positive integers
$l$, $t \in \{\, 0, \cdots, \nu (n) -1 \, \}$.

\begin{theorem}\thlabel{2.2}
Let $k$ be a field of characteristic $\neq 2$, $n$ a positive
integer, $\xi$ a generator of the group $U_n (k)$ of order $\nu
(n)$ and $l$, $t \in \{\, 0, \cdots, \nu (n) -1 \, \}$. Then:

$(1)$ Assume that one of the positive integers $n$ or $\nu (n)$ is
odd. Then the Hopf algebras $H_{4n, \, \xi^l}$ and $H_{4n, \,
\xi^t}$ are isomorphic if and only if there exists $s \in \{0, 1,
\cdots, n-1\}$ such that ${\rm gcd} \, (s, n) = 1$ and $\nu (n)
\mid l- ts$.

$(2)$ Assume that $n$ and $\nu (n)$ are both even. Then the Hopf
algebras $H_{4n, \, \xi^l}$ and $H_{4n, \, \xi^t}$ are isomorphic
if and only if there exists $s \in \{0, 1, \cdots, n-1\}$ such
that ${\rm gcd} \, (s, n) = 1$ and one of the following conditions
hold: $\nu (n) \mid l- ts$ or $2(l - ts) = \nu(n) \, q $, for some
odd integer $q$.
\end{theorem}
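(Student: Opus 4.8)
The plan is to invoke \coref{endosmh}, since as noted in the remark following \coref{primaclasi} each $H_{4n,\,\omega}$ is the semi-direct product $H_4 \# k[C_n]$ for the left action $\triangleright$ of \prref{mapex1}. Thus every Hopf algebra map $\psi : H_{4n,\,\xi^l} \to H_{4n,\,\xi^t}$ is encoded by a quadruple $(u,p,r,v)$, where $u : H_4 \to H_4$ and $r : k[C_n] \to H_4$ are unitary coalgebra maps and $p : H_4 \to k[C_n]$, $v : k[C_n] \to k[C_n]$ are Hopf algebra maps, subject to \equref{C1ab}--\equref{C8ab}. First I would pin down each component. The map $v$ is determined by $v(c)=c^s$ for some $s\in\{0,\dots,n-1\}$; evaluating \equref{C5ab} on $(c^i,c^j)$ and using that $c^{si}\triangleright'$ fixes each of $1,g$ forces $r$ to be multiplicative, hence a Hopf algebra map, so by \leref{morf_H_4k[C_n]} it is trivial (always) or $r(c^i)=g^i$ (only when $n$ is even). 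A short computation with \equref{morfbicrossmab} gives $\psi(x)=u(x)\,\#\,1$; combined with \equref{C1ab}, which forces $u(x)=0$ as soon as $p$ is nontrivial, this shows $p$ must be the trivial map, for otherwise $\psi(x)=0$ and $\psi$ is not injective. With $p$ trivial, \equref{C3ab} collapses to $u(ab)=u(a)u(b)$, so $u$ is a Hopf algebra map; by \leref{morf_H_4k[C_n]}(2) the only option making $\psi$ injective is $u(g)=g$, $u(x)=\beta x$, $u(gx)=\beta gx$ with $\beta\neq 0$.

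With these normal forms in hand, I would then observe that the only binding compatibility is \equref{C7ab} evaluated on the pair $(c,x)$. Since $c\triangleright x=\xi^l x$ in $H_{4n,\,\xi^l}$ and $c^s\triangleright' x=\xi^{ts}x$ in $H_{4n,\,\xi^t}$, it reduces (after cancelling $\beta$) to
\[
\xi^{ts}\, r(c)\, x = \xi^{l}\, x\, r(c).
\]
When $r$ is trivial ($r(c)=1$) this is $\xi^{ts}=\xi^l$, i.e. $\nu(n)\mid l-ts$. When $r(c)=g$ (possible only for $n$ even), using $xg=-gx$ it becomes $\xi^{ts}=-\xi^{l}$; since $-1\in U_n(k)$ exactly when $\nu(n)$ is even, and then $-1=\xi^{\nu(n)/2}$, this is solvable only when $n$ and $\nu(n)$ are both even, in which case it reads $ts-l\equiv \nu(n)/2 \pmod{\nu(n)}$, equivalently $2(l-ts)=\nu(n)\,q$ for some odd $q$. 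The remaining relations \equref{C1ab}--\equref{C8ab} are then checked to hold automatically on the generators $g$, $x$, $c$.

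It remains to record the bijectivity criterion and to read off the two cases. The map $\psi$ acts on generators by $g\mapsto g$, $x\mapsto \beta x$, $c\mapsto r(c)\,c^s$, and restricts to the group homomorphism $g\mapsto g$, $c\mapsto r(c)c^s$ of $G(H_{4n,\,\omega})\cong C_2\times C_n$, whose image is $\langle g, c^s\rangle$; this equals $C_2\times C_n$ (so $\psi$ is surjective, hence bijective, the dimensions being equal) if and only if $\gcd(s,n)=1$. Conversely, given $s$ with $\gcd(s,n)=1$ satisfying the appropriate numerical condition, I would assemble the quadruple $(u,p,r,v)$ explicitly and verify directly that $g\mapsto g$, $x\mapsto x$, $c\mapsto r(c)c^s$ respects all defining relations of $H_{4n,\,\xi^l}$, producing the isomorphism. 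Combining the two choices of $r$ yields precisely statement $(1)$ when $n$ or $\nu(n)$ is odd (only the trivial $r$ contributes) and statement $(2)$ when both are even (both choices of $r$ contribute). The main obstacle is the bookkeeping across the eight compatibility conditions needed to isolate \equref{C7ab} as the single effective constraint, together with the arithmetic translation of $\xi^{ts}=-\xi^l$ into the stated divisibility condition via the order and parity of $U_n(k)$.
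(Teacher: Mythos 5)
Your proposal is correct and follows essentially the same route as the paper's own proof: both invoke \coref{endosmh}, use \leref{morf_H_4k[C_n]} to force $p$ trivial, $u(g)=g$, $u(x)=\beta x$ with $\beta\neq 0$, $v(c)=c^s$, and $r$ either trivial or $r(c)=g$ (the latter only for $n$ even), and then isolate \equref{C7ab} evaluated at the pair $(c,x)$ as the single effective constraint $\xi^{ts}\,r(c)\,x=\xi^{l}\,x\,r(c)$, with the identical arithmetic translation of $\xi^{l-ts}=-1$ into $2(l-ts)=\nu(n)q$ for odd $q$. The only cosmetic difference is your bijectivity criterion via surjectivity on the group-like elements $G(H_{4n,\,\omega})\cong C_2\times C_n$, where the paper simply asserts that $\psi$ is an isomorphism if and only if $v$ is bijective, i.e. $\gcd(s,n)=1$.
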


\begin{proof} We shall prove more. In fact we will describe the set of all
possible Hopf algebra isomorphisms between $H_{4n, \, \xi^l}$ and
$H_{4n, \, \xi^t}$. We denote by $H_{4n, \, \xi^l} := H_4 \#^l
k[C_n]$ (resp. $H_{4n, \, \xi^t} := H_4 \#^t k[C_n]$) the
semidirect product implemented by the left action $\triangleright
: k[C_n] \ot H_4 \to H_4$, $c \triangleright x = \xi^l x$ (resp.
$c \triangleright' x = \xi^t x$) from \prref{mapex1}. We recall
from \coref{endosmh} that $\psi : H_{4n, \, \xi^l} \to H_{4n, \,
\xi^t}$ is an isomorphism of Hopf algebras if and only if there
exists two unitary coalgebra maps $u: H_{4} \rightarrow H_{4}$,
$r: k[C_{n}] \rightarrow H_{4}$ and two morphisms of Hopf algebras
$p: H_{4} \rightarrow k[C_{n}]$, $v : k[C_{n}] \rightarrow
k[C_{n}]$ such that for any $a$, $b \in H_4$, $g$, $h \in k[C_n]$
we have
\begin{eqnarray}
u(a_{(1)}) \ot p(a_{(2)}) &{=}& u(a_{(2)}) \ot p(a_{(1)})\eqlabel{C1ab10}\\
r(h_{(1)}) \ot v(h_{(2)}) &{=}& r(h_{(2)}) \ot v(h_{(1)})\eqlabel{C2ab10}\\
u(ab) &{=}& u(a_{(1)}) \, \bigl( p (a_{(2)}) \triangleright ' u(b) \bigl)\eqlabel{C3ab10}\\
r(hg) &{=}& r(h_{(1)}) \, \bigl(v(h_{(2)}) \triangleright ' r(g)\bigl)\eqlabel{C5ab10}\\
r(h_{(1)}) \bigl(v(h_{(2)}) \triangleright' u(b) \bigl) &{=}& u
(h_{(1)} \triangleright b_{(1)}) \, \Bigl( p (h_{(2)}
\triangleright b_{(2)}) \triangleright ' r(h_{(3)})
\Bigl) \eqlabel{C7ab10}\\
v(h) \, p(b) &{=}& p (h_{(1)} \triangleright b ) \, v (h_{(2)})
\eqlabel{C8ab10}
\end{eqnarray}
and $\psi = \psi_{(u, r, p, v)} : H_{4n, \, \xi^l} \to H_{4n, \,
\xi^t}$ is given by:
\begin{equation}\eqlabel{morfbicrossmab10}
\psi(a \# h) = u(a_{(1)}) \, \bigl( p(a_{(2)}) \triangleright'
r(h_{(1)}) \bigl) \,\, \# \,  p(a_{(3)}) \, v(h_{(2)})
\end{equation}
for all $a \in H_4$ and $h\in k[C_n]$. In what follows we describe
completely all quadruples $(u, r, p, v)$ which satisfy the
compatibility conditions \equref{C1ab10}-\equref{C8ab10} such that
$\psi = \psi_{(u, r, p, v)}$ given by \equref{morfbicrossmab10} is
bijective. First, we should notice that \equref{C2ab10} holds
trivially as $k[C_{n}]$ is cocommutative. Also any morphism of
Hopf algebras $v: k[C_n] \to k[C_n]$ is given by
\begin{equation}\eqlabel{v-ul}
v: k[C_n] \to k[C_n], \quad v (c) = c^s
\end{equation}
for some $s = 0, \cdots, n-1$. For future use, we note that such a
morphism $v$ is bijective if and only if $(s, n) = 1$.

Next we shall prove simultaneously that any Hopf algebra map $p:
H_{4} \rightarrow k[C_{n}]$ of a such quadruple $(u, r, p, v)$ is
the trivial morphism
\begin{equation}\eqlabel{p-ul}
p: H_4 \to k[C_n], \quad p (z) = \varepsilon (z) 1
\end{equation}
for any $z\in H_4$ and any unitary coalgebra morphism $u: H_{4}
\rightarrow H_{4}$ of a such quadruple $(u, r, p, v)$ is given by
\begin{equation}\eqlabel{u-ul}
u: H_{4} \rightarrow H_{4}, \qquad u(1) = 1, \quad u (g) = g,
\quad u(x) = \gamma \, x, \quad u(gx) = \gamma \, gx
\end{equation}
for some non-zero scalar $\gamma \in k^*$. Indeed, it follows from
$(3a)$ and $(4a)$ of \leref{morf_H_4k[C_n]} that $p(x) = 0$ and
$p(gx) = 0$. Now, using the normalizing conditions \equref{mp1},
the fact that $r$, $v$ are unitary maps and $p$ is a coalgebra map
we obtain from \equref{morfbicrossmab10} that
$$
\psi ( x \# 1) = u(x_{(1)}) \# p(x_{(2)}) = u(x) \# 1 + u(g) \#
p(x) = u(x) \# 1
$$
As $\psi$ has to be an isomorphism and $x$ is an element of the
basis, we obtain that $u(x) \neq 0$. It follows from $(1)$ of
\leref{morf_H_4k[C_n]} that $u (x) = \alpha - \alpha \, g + \gamma
\, x$, for some $\alpha$, $\gamma \in k$. Now, by applying
\equref{C1ab10} for $a = x$, we obtain:
$$
(\alpha - \alpha \, g + \gamma \, x) \ot 1 = (\alpha - \alpha \, g
+ \gamma \, x) \ot p(g)
$$
As $u(x) \neq 0$, we must have $p(g) = 1$ and therefore $p: H_{4}
\rightarrow k[C_{n}]$ is the trivial morphism. Condition
\equref{C3ab10} is then equivalent to $u: H_4 \to H_4$ being an
algebra map, i.e. $u$ is a morphism of Hopf algebras. Using $(2)$
of \leref{morf_H_4k[C_n]} we obtain that $u$ is given by
\equref{u-ul}, where $\gamma \in k^{*}$, since $u(x) \neq 0$.
Thus, we fully described the maps $u$, $v$ and $p$ in the
quadruple $(u, r, p, v)$. Furthermore, since $p$ is the trivial
morphism the compatibility conditions \equref{C1ab10} and
\equref{C8ab10} are trivially fulfilled.

We focus now on the unitary morphism of coalgebras $r: k[C_n] \to
H_4$ in a such quadruple. As $r$ is a coalgebra map we have that
$r (c^i) \in G(H_4) = \{1, g\}$, for all $i = 0, \cdots, n-1$.
Since $c^{i} \triangleright' z = z$, for all $i = 0, \cdots, n-1$
and $z \in  \{1, g\}$, it follows that \equref{C5ab10} is
equivalent to $r: k[C_{n}] \rightarrow H_{4}$ being an algebra
map, that is $r : k[C_n] \to H_4$ is a Hopf algebra map. In
conclusion, taking into account that $p$ is the trivial morphism,
we proved so far that any potential isomorphism $\psi: H_{4n, \,
\xi^l} \to H_{4n, \, \xi^t}$  is defined by the formula
\begin{equation}\eqlabel{nouppsi}
\psi: H_4 \, \#^l \, k[C_n] \to H_4 \, \#^t \, k[C_n], \qquad
\psi(a \# h) = u(a) \, r(h_{(1)}) \, \# \, v(h_{(2)})
\end{equation}
for all $a \in H_4$ and $h\in k[C_n]$, where $u: H_4 \to H_4$ is
the isomorphism of Hopf algebras given by \equref{u-ul}, $v:
k[C_n] \to k[C_n]$ is the Hopf algebra map given by \equref{v-ul}
and $r: k[C_n] \to H_4$ is a morphism of Hopf algebras satisfying
the compatibility condition \equref{C7ab10} in its simplified
form, namely
\begin{equation} \eqlabel{C7'ab10}
r(h_{(1)}) \bigl( v(h_{(2)}) \triangleright' u(b) \bigl) = u
(h_{(1)} \triangleright b) \, r(h_{(2)})
\end{equation}
for all $h\in k[C_n]$ and $b\in H_4$.

Finally, it remains to describe when there exists a Hopf algebra
map $r: k[C_n] \to H_4$ satisfying the compatibility condition
\equref{C7'ab10} such that $\psi$ given by \equref{nouppsi} is
bijective.

According to \leref{morf_H_4k[C_n]} we distinguish two cases.
Suppose first that $n$ is odd. Then using $(3b)$ of
\leref{morf_H_4k[C_n]} we obtain that $r: k[C_n] \to H_4$ is also
the trivial morphism namely, $r(c^{i}) = \varepsilon (c^i) 1 = 1$,
for all $i \in 0, 1, \cdots, n-1$. Hence, $\psi$ given by
\equref{nouppsi} takes the simplified form $\psi(a \# h) = u(a) \#
v(h)$ and $\psi$ is an isomorphism if and only if $v$ is
bijective, i.e. if and only if $(s, n) = 1$. Moreover, the
compatibility condition \equref{C7'ab10} becomes:
\begin{equation}\eqlabel{last}
v(h) \triangleright' u(a) = u(h \triangleright a)
\end{equation}
for all $a \in H_{4}$ and $h \in k[C_{n}]$. Applying the above
compatibility for $a = x$ and $h = c$ we obtain $\xi^{l - s t} =
1$, thus $\nu (n) \mid (l-st)$. Moreover, it is easy to see that
if $\nu (n) \mid (l-st)$, then \equref{last} holds for any $a \in
H_{4}$ and $h \in k[C_{n}]$.

Finally, suppose now that $n$ is even and use $(4b)$ of
\leref{morf_H_4k[C_n]}. If $r$ is the trivial morphism then the
proof follows exactly as in the above odd case. Assume now that
$r(c) = g$. Hence, $\psi$ is given by $\psi(a \# h) = u(a)r(h) \#
v(h)$, for all $a \in H_{4}$ and $h \in \{1, c, \cdots,
c^{n-1}\}$. It is easy to see that $\psi$ is an isomorphism if and
only if $v$ is an isomorphism. i.e. if and only if $(s, n) = 1$.
Now, we observe that the compatibility condition \equref{C7'ab10}
is equivalent to $\xi^{l-ts} = -1$. Indeed, \equref{C7'ab10}
applied for $a = x$ and $h = c$ gives precisely
 $\xi^{l-ts} = -1$. Conversely, if \equref{C7'ab10} holds for $a =
x$ and $h = c$, then it is straightforward to see that it is
fulfilled for any $a \in H_{4}$ and any $h \in k[C_{n}]$.

We shall prove now that $\xi^{l-ts} = -1$ if and only if $ \nu
(n)$ is even and $2(l - ts) = \nu(n) \, q$, for some odd integer
$q$ and this finishes the proof.

Indeed, since ${\rm Char}(k) \neq 2$ and $\xi^{\nu(n)} = 1$, the
equation $\xi^{l-ts} = -1$ is possible only if $ \nu (n)$ is even.
Consider $\nu(n) = 2m$, for a positive integer $m$. As $\xi^{2m} =
1$, we obtain $\xi^{m} = -1$, otherwise using that $2m$ is the
order of $\xi$ we will obtain $2m | m$, contradiction. Assume
first that $\xi^{l-ts} = -1$. Then, $\xi^{2(l-ts)} = 1$ and hence
$\nu(n)\, | \, 2(l-ts)$, that is $l-ts = m q$, for some integer
$q$. Thus, $ - 1 = \xi^{l-ts} = (\xi^{m})^{q} = (-1)^{q}$, hence
$q$ is odd. Conversely, is straightforward.
\end{proof}

\begin{corollary}\colabel{2.3}
Let $k$ be a field of characteristic $\neq 2$, $n$ a positive
integer and $\xi$ a generator of the group $U_n (k)$. Then,
$H_{4n, \, \xi^{l}}$ is isomorphic to $H_{4n, \, \xi}$, for any
positive integer $l \in \{1, \cdots, \nu(n) - 1 \}$ such that
${\rm gcd} \, (l, n) = 1$.

In particular, if $n = p$ is a prime odd number, then a Hopf
algebra factorizes through $H_{4}$ and $k[C_{p}]$ if and only if
it is isomorphic to $H_{4n, 1}$ or $H_{4n, \xi}$, where $\xi$ is a
generator of $U_p(k)$.
\end{corollary}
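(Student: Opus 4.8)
The plan is to derive the whole statement from the classification criterion in \thref{2.2}, so that the corollary reduces to a single well-chosen substitution.

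First I would establish the main assertion, namely $H_{4n,\,\xi^l}\cong H_{4n,\,\xi}$ whenever $1\le l\le\nu(n)-1$ and ${\rm gcd}\,(l,n)=1$. I would apply \thref{2.2} comparing $\xi^l$ with $\xi^t$ for $t=1$, and test the candidate parameter $s:=l$. Three things must be checked, all immediate: since $\nu(n)\mid n$ we have $l\le\nu(n)-1\le n-1$, so $s=l$ lies in the admissible range $\{0,1,\cdots,n-1\}$; next ${\rm gcd}\,(s,n)={\rm gcd}\,(l,n)=1$ holds by hypothesis; and finally $l-ts=l-l=0$ is divisible by $\nu(n)$. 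The divisibility condition ``$\nu(n)\mid l-ts$'' is precisely what part $(1)$ of \thref{2.2} requires and is also one of the two admissible disjuncts in part $(2)$, so the isomorphism follows regardless of the parities of $n$ and $\nu(n)$. This single choice settles the first statement (the case $\nu(n)=1$ being vacuous, as then no such $l$ exists).

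For the ``in particular'' part with $n=p$ an odd prime, I would first invoke \coref{primaclasi} to reduce to the family $\{H_{4p,\,\omega}\mid\omega\in U_p(k)\}$: every Hopf algebra factorizing through $H_4$ and $k[C_p]$ is isomorphic to some $H_{4p,\,\xi^l}$ with $l\in\{0,1,\cdots,\nu(p)-1\}$. Because $\nu(p)$ divides the prime $p$, only two cases occur. If $\nu(p)=1$ then $\xi=1$ and the entire family collapses to $H_{4p,\,1}$. If instead $\nu(p)=p$, then $l=0$ gives $H_{4p,\,1}$, while for each $l\in\{1,\cdots,p-1\}$ one has ${\rm gcd}\,(l,p)=1$ automatically (as $p$ is prime), whence the first part yields $H_{4p,\,\xi^l}\cong H_{4p,\,\xi}$. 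In either case every factorizing Hopf algebra is isomorphic to $H_{4p,\,1}$ or $H_{4p,\,\xi}$, which is the claim.

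The argument is genuinely short because \thref{2.2} already encodes all the substantive number theory; the only point requiring a moment's attention is verifying that $s=l$ stays within $\{0,\cdots,n-1\}$, where the elementary fact $\nu(n)\mid n$ from the preliminaries is invoked. I anticipate no real obstacle beyond this bookkeeping, with the degenerate possibility $\nu(p)=1$ in the second statement being the only edge case one must remember to mention.
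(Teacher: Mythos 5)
Your proposal is correct and follows essentially the same route as the paper: the paper's entire proof reads ``We apply \thref{2.2} for $t = 1$ by taking $s := l$ in its statement,'' which is exactly your substitution. Your additional bookkeeping --- checking $s = l \in \{0,\cdots,n-1\}$ via $\nu(n) \mid n$, noting that $\nu(n)\mid l-ts=0$ covers both parity cases of \thref{2.2}, and handling the ``in particular'' claim through \coref{primaclasi} with the dichotomy $\nu(p)\in\{1,p\}$ --- simply makes explicit what the paper leaves to the reader.
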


\begin{proof}
We apply \thref{2.2} for $t = 1$ by taking $s := l$ in its
statement.
\end{proof}

In what follows we continue our investigation in order to indicate
precisely the number of all types of isomorphisms of Hopf algebras
which factorize through  $H_4$ and $k[C_n]$. For this we need the
following lemma in the proof of which we use Dirichlet's theorem
on primes in an arithmetical progression.

\begin{lemma}\lelabel{lemaelem}
Let $n$ and $m$ be two positive integers such that $ m \mid n $
and $ \varphi : \mathbb{Z}_n \rightarrow \mathbb{Z}_m $ the
canonical projection $ \varphi(a + n\mathbb{Z}) = a + m\mathbb{Z}
$, for all $a \in \mathbb{Z}$. Then $\varphi \left(
U(\mathbb{Z}_n) \right) = U(\mathbb{Z}_m)$.
\end{lemma}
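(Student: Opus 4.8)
The plan is to prove the two inclusions separately, the first being routine and the second carrying all the content. First I would observe that, since $m \mid n$, we have $n\ZZ \subseteq m\ZZ$, so $\varphi$ is a well-defined ring homomorphism $\ZZ_n \to \ZZ_m$ with $\varphi(1 + n\ZZ) = 1 + m\ZZ$. Every unital ring homomorphism carries units to units, hence $\varphi\bigl(U(\ZZ_n)\bigr) \subseteq U(\ZZ_m)$; equivalently, if ${\rm gcd}(a, n) = 1$ then ${\rm gcd}(a, m) = 1$, because any common divisor of $a$ and $m$ also divides $n$. This settles one inclusion immediately.

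For the reverse inclusion I would unwind what surjectivity onto $U(\ZZ_m)$ means. Fix a class $b + m\ZZ \in U(\ZZ_m)$, so that ${\rm gcd}(b, m) = 1$. To produce a preimage it suffices to find an integer $a$ satisfying $a \equiv b \pmod{m}$ and ${\rm gcd}(a, n) = 1$: the class $a + n\ZZ$ then lies in $U(\ZZ_n)$ and $\varphi(a + n\ZZ) = b + m\ZZ$. Thus the whole problem reduces to lifting a residue coprime to $m$ to a residue coprime to the possibly larger modulus $n$.

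This is exactly where Dirichlet's theorem enters. Since ${\rm gcd}(b, m) = 1$, Dirichlet's theorem on primes in the arithmetic progression $b,\, b+m,\, b+2m,\, \dots$ guarantees infinitely many primes $p$ with $p \equiv b \pmod{m}$. Only finitely many primes divide $n$, so I may choose such a prime $p$ with $p \nmid n$. For this $p$ we have ${\rm gcd}(p, n) = 1$, because $p$ is prime and does not divide $n$, while still $p \equiv b \pmod m$. Taking $a := p$ yields the required lift, completing the reverse inclusion and hence the equality $\varphi\bigl(U(\ZZ_n)\bigr) = U(\ZZ_m)$.

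The only genuine obstacle is this surjectivity step: a naive lift of $b$ to $\ZZ_n$ need not be coprime to the prime factors of $n$ that do not already divide $m$, and Dirichlet's theorem removes this difficulty in one stroke by supplying an honest prime representative. I would remark in passing that one can also avoid Dirichlet by an elementary Chinese Remainder Theorem argument, solving $a \equiv b \pmod m$ together with $a \equiv 1 \pmod{p}$ for every prime $p \mid n$ with $p \nmid m$; but the prime-producing argument is the cleaner route and is the one signalled in the statement.
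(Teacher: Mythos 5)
Your proposal is correct and follows essentially the same route as the paper: the easy inclusion via units mapping to units, and surjectivity by applying Dirichlet's theorem to the progression $\{b + km\}$ and selecting a prime $p \nmid n$, which is automatically coprime to $n$ and lifts the given class. The paper's proof is exactly this argument, so apart from your (valid) side remark about a Chinese Remainder Theorem alternative, the two proofs coincide.
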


\begin{proof}
Consider $a + m\mathbb{Z} \in U(\mathbb{Z}_m)$. Thus, $\gcd(a,m) =
1$. The Dirichlet's theorem \cite[Theorem 8]{Chan} ensure the fact
that in the set $\{a + km \vert k \in \mathbb{Z} \}$ there exists
infinitely many primes. In particular, there exists a prime number
$p \in \{a + km \vert k \in \mathbb{Z} \}$ such that $p \nmid n$.
As $\varphi (p + n\mathbb{Z}) = a + m\mathbb{Z}$, and $p +
n\mathbb{Z} \in U(\mathbb{Z}_n)$, we deduce that $U(\mathbb{Z}_m)
\subseteq \varphi \left( U(\mathbb{Z}_n) \right)$. Obviously,
$\varphi \left( U(\mathbb{Z}_n) \right) \subseteq
U(\mathbb{Z}_m)$, hence our claim.
\end{proof}

We shall describe and count the set of types of Hopf algebras that
factorize through $H_4$ and $k[C_n]$ .

\begin{theorem}\thlabel{clasnoudir}
Let $k$ be a field of characteristic $\neq 2$, $n$ a positive
integer, $\xi$ a generator of $U_n(k)$ and let $\nu(n) =
p_1^{\alpha_1} \cdots p_r^{\alpha_r}$ be the prime decomposition
of $\nu(n)$. Then:

$(1)$ There exists an isomorphism of Hopf algebras $H_{4n, \,
\xi^t} \simeq H_{4n, \, \xi^{\gcd(t, \, \nu(n))}}$, for all $t =
0, \cdots ,\nu(n)-1$. In particular, $H_{4n, \, \xi^i} \simeq
H_{4n, \, \xi^{n - i}}$, for any $i = 0, \cdots, \nu(n) - 1$.

$(2)$ Assume that $\nu(n)$ is odd. Then the set of types of Hopf
algebras that factorize through $H_4$ and $k[C_n]$ is in bijection
with the set of all Hopf algebras $H_{4n, \, \xi^{d}}$, where $d$
running over all positive divisors of $\nu(n)$. In particular, the
number of types of such Hopf algebras is
$$
(\alpha_1 + 1)(\alpha_2 + 1) \cdots (\alpha_r + 1)
$$

$(3)$ Assume that $\nu(n) = 2^{\alpha_1} p_2^{\alpha_2} \cdots
p_r^{\alpha_r}$ is even. Then the set of types of Hopf algebras
that factorize through $H_4$ and $k[C_n]$ is in bijection with the
set of all Hopf algebras $H_{4n, \, \xi^{d}}$, where $d$ is
running over all positive divisors of
$\displaystyle\frac{\nu(n)}{2}$. In particular, the number of
types of such Hopf algebras is
$$
\alpha_1(\alpha_2 + 1) \cdots (\alpha_r + 1)
$$
\end{theorem}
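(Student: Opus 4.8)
The plan is to translate the entire statement into a combinatorial problem about orbits of $\mathbb{Z}_{\nu(n)}$ under a suitable group action and then read off the number of orbits from the divisor structure. Since $\xi$ has order $\nu(n)$, the algebra $H_{4n,\,\xi^{t}}$ depends only on the residue of $t$ modulo $\nu(n)$, so I regard the exponents as elements of $\mathbb{Z}_{\nu(n)}$. The first key reduction is to replace the awkward condition ``there exists $s$ with $\gcd(s,n)=1$'' appearing in \thref{2.2} by ``there exists a unit $\bar s\in U(\mathbb{Z}_{\nu(n)})$'': indeed $\nu(n)\mid n$, so $\gcd(s,n)=1$ forces $\bar s\in U(\mathbb{Z}_{\nu(n)})$, and conversely \leref{lemaelem} (applied to $\nu(n)\mid n$) shows every unit modulo $\nu(n)$ is the reduction of some $s$ coprime to $n$. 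Throughout I will also use the elementary fact that two elements of $\mathbb{Z}_M$ lie in the same $U(\mathbb{Z}_M)$-orbit for the multiplicative action if and only if they have the same $\gcd$ with $M$, so that these orbits are indexed bijectively by the positive divisors of $M$; the surjectivity of the reductions $U(\mathbb{Z}_M)\to U(\mathbb{Z}_{M/d})$ needed for the nontrivial direction is again supplied by \leref{lemaelem}.

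For part $(1)$ I would prove $H_{4n,\,\xi^{t}}\simeq H_{4n,\,\xi^{d}}$ with $d:=\gcd(t,\nu(n))$ using only the common divisibility disjunct $\nu(n)\mid l-ts$, which is present in both cases of \thref{2.2}; hence the argument is uniform in the parity of $n$ and $\nu(n)$. Writing $t=d\,t'$ with $\gcd(t',\nu(n)/d)=1$, the congruence $ts\equiv d\ (\mathrm{mod}\ \nu(n))$ is equivalent to $t's\equiv1\ (\mathrm{mod}\ \nu(n)/d)$; since $t'$ is a unit modulo $\nu(n)/d$ this fixes $s$ modulo $\nu(n)/d$, and \leref{lemaelem} (now with $\nu(n)/d\mid n$) lets me lift the required residue to an $s$ coprime to $n$. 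The ``in particular'' assertion then follows because $n\equiv0\ (\mathrm{mod}\ \nu(n))$ gives $\xi^{\,n-i}=\xi^{-i}$ and $\gcd(-i,\nu(n))=\gcd(i,\nu(n))$, so both $H_{4n,\,\xi^{i}}$ and $H_{4n,\,\xi^{\,n-i}}$ are isomorphic to the same $H_{4n,\,\xi^{\gcd(i,\nu(n))}}$.

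For part $(2)$, with $\nu(n)$ odd, \thref{2.2}$(1)$ applies and the isomorphism relation on $\mathbb{Z}_{\nu(n)}$ becomes exactly $l\sim t \iff l\equiv t\bar s$ for some $\bar s\in U(\mathbb{Z}_{\nu(n)})$, i.e. the orbit relation of the multiplicative action. By the $\gcd$-classification of these orbits, the isomorphism types are in bijection with the divisors $d\mid\nu(n)$ via $H_{4n,\,\xi^{t}}\mapsto\gcd(t,\nu(n))$; part $(1)$ shows every type is realized by such a divisor, and for distinct divisors $d\ne d'$ one checks that $\gcd(d,\nu(n))=d\ne d'=\gcd(d',\nu(n))$ is preserved under multiplication by a unit, so they are non-isomorphic. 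Counting divisors of $\nu(n)=p_1^{\alpha_1}\cdots p_r^{\alpha_r}$ gives $(\alpha_1+1)\cdots(\alpha_r+1)$.

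Part $(3)$ is where the real work lies, and I expect the closure-and-bijection bookkeeping to be the main obstacle. Here $\nu(n)$ even forces $n$ even (as $2\mid\nu(n)\mid n$), so \thref{2.2}$(2)$ applies; writing $m:=\nu(n)/2$, the extra clause $2(l-ts)=\nu(n)q$ with $q$ odd is equivalent to $l\equiv ts+m\ (\mathrm{mod}\ \nu(n))$. Thus the relation is the orbit relation of the set of maps $T=\{x\mapsto \bar s x,\ x\mapsto \bar s x+m : \bar s\in U(\mathbb{Z}_{2m})\}$ on $\mathbb{Z}_{2m}$. The first thing to verify is that $T$ is actually a group: the only nontrivial check is that composing the two types stays in $T$, which works because every unit modulo $2m$ is odd, whence $\bar s\,m\equiv m\ (\mathrm{mod}\ 2m)$. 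I would then show that the reduction $\pi:\mathbb{Z}_{2m}\to\mathbb{Z}_{m}$ carries $T$-orbits bijectively onto the $U(\mathbb{Z}_m)$-orbits of $\mathbb{Z}_m$: it is well defined and surjective because $\pi$ intertwines $T$ with multiplication by units modulo $m$ (the translation by $m$ dies under $\pi$, and $U(\mathbb{Z}_{2m})$ surjects onto $U(\mathbb{Z}_m)$ by \leref{lemaelem}), and it is injective because $\ker\pi=\{0,m\}$, so $\pi(y)=\bar s\,\pi(x)$ lifts to $y\in\{\bar s x,\ \bar s x+m\}$, placing $y$ in the $T$-orbit of $x$. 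Consequently the types are in bijection with the $U(\mathbb{Z}_m)$-orbits of $\mathbb{Z}_m$, i.e. with the divisors $d\mid m=\nu(n)/2$, each represented by $H_{4n,\,\xi^{d}}$; counting divisors of $\nu(n)/2=2^{\alpha_1-1}p_2^{\alpha_2}\cdots p_r^{\alpha_r}$ yields $\alpha_1(\alpha_2+1)\cdots(\alpha_r+1)$, completing the proof.
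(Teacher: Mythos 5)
Your proposal is correct. For part $(1)$ it is essentially the paper's own argument: both reduce to finding $s$ coprime to $n$ lying in a prescribed residue class modulo $\nu(n)/d$, which \leref{lemaelem} provides. But for parts $(2)$ and $(3)$ your route is genuinely different. The paper proves that distinct divisors give non-isomorphic Hopf algebras by a direct valuation contradiction (comparing the exponent of a prime $p$ in $d_1$, $d_2$ and $\nu(n)$ after dividing by $p^{\beta}$), and in the even case it shows that every divisor $d_1$ of $\nu(n)$ collapses onto a divisor of $\nu(n)/2$ by an explicit three-case construction of the parameter $s$: one must choose $-q$ to be a product of prime divisors of $n$ avoiding $2u/v$, verify $\gcd(s,n)=1$ and $s\le n-1$, which only works for $n\ge 6$ and forces a separate treatment of $n<6$. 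You instead recast the criterion of \thref{2.2} as an orbit problem: after using \leref{lemaelem} to trade ``$\gcd(s,n)=1$'' for ``$\bar s\in U(\ZZ_{\nu(n)})$'', the odd case is the multiplicative action of $U(\ZZ_{\nu(n)})$ on $\ZZ_{\nu(n)}$, whose orbits are classified by $\gcd$ with $\nu(n)$, and the even case is the action of the affine group $T$ generated by the units and the translation by $\nu(n)/2$ (your check that $T$ is closed under composition, using that units modulo $\nu(n)$ are odd, is the key point making ``orbit'' legitimate). Your observation that the projection $\ZZ_{\nu(n)}\to\ZZ_{\nu(n)/2}$, with kernel $\{0,\nu(n)/2\}$, induces a bijection between $T$-orbits and unit orbits modulo $\nu(n)/2$ then replaces the paper's explicit construction entirely. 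What your route buys is uniformity and robustness: no ad hoc choice of $s$, no $n\ge 6$ bound, no small-$n$ exceptions, and both counts follow from a single divisor-counting principle, with Dirichlet's theorem entering only through the surjectivity of unit-group reductions. What the paper's route buys is concreteness: for each pair of equivalent parameters it exhibits an explicit $s$, hence an explicit isomorphism, in the spirit of the explicit automorphism description given later in \coref{auto}.
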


\begin{proof} $(1)$ Let $d = \gcd(t, \, \nu(n))$. Consider two positive
integers $a$ and $m$ such that $t = d a$ and $\nu(n) = d m$. Since
$\gcd(a, \, m) = 1$ and $m \mid \nu(n) \mid n$, we obtain from
\leref{lemaelem} that there exists $s \in \{0,1,...,n - 1\}$ such
that $\gcd(s,n) = 1$ and $m \mid a - s$. Multiplying the last
relation by $d$, we obtain $\nu(n) \mid t - ds$. This, together
with \thref{2.2}, proves that $H_{4n, \, \xi^t} \simeq H_{4n, \,
\xi^d}$. Since $\nu(n) | n$, the last statement follows from the
first part using that $\gcd (i, \nu(n)) = \gcd (n-i, \nu(n))$, for
any $i = 1, \cdots, \nu(n) - 1$.

$(2)$ In the first part we have proved that any Hopf algebra
$H_{4n, \, \xi^t}$ is isomorphic to $H_{4n, \, \xi^d}$, for some
divisor $d$ of $\nu(n)$. We will show now that if $d_1$ and $d_2$
are two distinct positive divisors of $\nu(n)$ then
$H_{4n,\xi^{d_1}}$ and $H_{4n,\xi^{d_2}}$ are not isomorphic. This
will prove our claim.

Let therefore $d_1$ and $d_2$ be two distinct positive divisors of
$\nu(n)$. Suppose that $H_{4n,\xi^{d_1}} \simeq H_{4n,\xi^{d_2}}$.
By \thref{2.2} there exists then $s \in \{0,1,...,n-1\}$ such that
$\gcd(s,n)=1$ and $\nu(n) \mid d_1 - sd_2$. Since $d_1$ and $d_2$
are distinct, they differ by the exponent of a prime number, say
$p$. Let $\alpha$ and $\beta$ be the respective exponents of $d_1$
and $d_2$. We do not restrict the generality by assuming that
$\alpha
> \beta$. Since
$$
\frac{\nu(n)}{p^{\beta}} \mid \frac{d_1}{p^{\beta}} -
s\frac{d_2}{p^{\beta}}
$$
$p \mid \displaystyle\frac{\nu(n)}{p^{\beta}}$, $p \mid
\displaystyle\frac{d_1}{p^{\beta}}$, and $p \nmid
s\displaystyle\frac{d_2}{p^{\beta}}$, we have arrived at the
desired contradiction.

$(3)$ We first prove that if $d_1$ and $d_2$ are two distinct
positive divisors of $\displaystyle\frac{\nu(n)}{2}$ then
$H_{4n,\xi^{d_1}} \ncong H_{4n,\xi^{d_2}}$. Then we prove that if
$d_1$ is a positive divisor of $\nu(n)$ then there exists $d_2$ a
positive divisor of $\displaystyle\frac{\nu(n)}{2}$ such that
$H_{4n,\xi^{d_1}} \cong H_{4n,\xi^{d_2}}$. This, together with
assertion $(1)$ will finish the proof.

Suppose $d_1$ and $d_2$ are two distinct positive divisors of
$\displaystyle\frac{\nu(n)}{2}$. If $s \in \{0,1,...,n-1\}$ and
$\gcd(s,n) = 1$, then $\displaystyle\frac{\nu(n)}{2} \nmid d_1 -
d_2s$. Indeed, $d_1$ and $d_2$ being distinct they differ by the
exponent of a prime number, $p$. Let $\alpha$ and $\beta$ be the
respective exponents of $d_1$ and $d_2$. We may assume, without
loss of generality that $\alpha > \beta$. Since $p \mid
\displaystyle\frac{\nu(n)}{2p^{\beta}}$, $p \mid
\displaystyle\frac{d_1}{p^{\beta}}$, and $p \nmid
s\displaystyle\frac{d_2}{p^{\beta}}$ (recall that $\nu(n) \mid n$
and $\gcd(s,n) = 1$) we cannot have
$$
\frac{\nu(n)}{2p^{\beta}} \mid \frac{d_1}{p^{\beta}} -
s\frac{d_2}{p^{\beta}}
$$
hence, neither $\displaystyle\frac{\nu(n)}{2} \mid d_1 - d_2s$.
This being the case, we deduce from \thref{2.2} that
$H_{4n,\xi^{d_1}}$ and $H_{4n,\xi^{d_2}}$ are not isomorphic.

For the second claim, consider $d_1$ a positive divisor of
$\nu(n)$. If $d_1 = \nu(n)$, then $H_{4n,\xi^{d_1}} = H_{4n,\xi^0}
\cong H_{4n,\xi^{\frac{\nu(n)}{2}}}$, as it results from
\thref{2.2} observing that:
$$2 \left(
\displaystyle\frac{\nu(n)}{2} - 1 \cdot 0 \right) = \nu(n) \cdot
1$$
Thus $d_2 = \displaystyle\frac{\nu(n)}{2}$ in this case. If
$d_1 \mid \displaystyle\frac{\nu(n)}{2}$, we take $d_2 = d_1$. If
$d_1 \neq \nu(n)$ and $d_1 \nmid \displaystyle\frac{\nu(n)}{2}$,
we take $d_2 = \displaystyle\frac{d_1}{2}$. Indeed, $2 \nmid
\displaystyle\frac{\nu(n)}{d_1}$, hence $\nu(n) = 2^{\alpha} u$
and $d_1 = 2^{\alpha} v$, for some positive integer $\alpha$ and
odd integers $u$ and $v$ such that $v \mid u$. Let $-q$ be the
product of all prime divisors of $n$ that do not divide
$2\displaystyle\frac{u}{v}$, and $s = 2 -
\displaystyle\frac{u}{v}q$. Then $q$ is an odd integer,
$-\displaystyle\frac{u}{v}q \leq \displaystyle\frac{n}{2}$, and
$\gcd(s,n) = 1$. Also,
$$
s = 2 - \frac{u}{v}q \leq 2 + \frac{n}{2} \leq n - 1
$$
as soon as $n \geq 6$. Multiplying $s = 2 -
\displaystyle\frac{u}{v}q$ by $d_2 = \displaystyle\frac{d_1}{2}$
we find that $2(d_1 - sd_2) = \nu(n)q$. Therefore, when $n \geq
6$, we have $H_{4n,\xi^{d_1}} \cong H_{4n,\xi^{d_2}}$, by virtue
of \thref{2.2} . If $n < 6$ then $\nu(n) = 2$ or $\nu(n) = 4$,
cases in which there is nothing more to prove.
\end{proof}

\begin{example}
A straightforward computation based on \thref{clasnoudir} shows
the following: if $\nu(n) = 2$ then there is only one type of
isomorphism, namely the tensor product while if $ \nu(n) \in \{3,
4, 5, 6, 7\}$ there are two types of isomorphisms of bicrossed
products between $H_{4}$ and $k[C_{n}]$ as follwos: $H_{4n, \, 1}$
and $H_{4n, \, \xi}$. If $\nu(n) = 8$ or $\nu(n) = 9$ then there
are three types of isomorphisms of bicrossed products between
$H_{4}$ and $k[C_{n}]$ namely $H_{4n, \, 1}$, $H_{4n, \, \xi}$ and
$H_{4n, \, \xi^{2}}$ if $\nu (n) = 8$ and respectively $H_{4n, \,
1}$, $H_{4n, \, \xi}$ and $H_{4n, \, \xi^{3}}$ for $\nu(n) = 9$.
\end{example}

We conclude the discussion on these family of quantum groups by
describing the group of Hopf algebra automorphisms of $H_{4n, \,
\xi^{t}}$. For any $t = 0, 1,\cdots, \nu(n) - 1$ we define:
$$
U_{t}(\ZZ_{n}) := \{ \hat{s}  \in U(\ZZ_{n}) \, ; \,  \nu(n) ~|~
t(s-1)\}
$$
$$
V_{t}(\ZZ_{n}) : = \{ \hat{l}  \in U(\ZZ_{n}) \, ; \, 2t(l-1) =
\nu(n)q, \,\, {\rm for} \,\, {\rm some} \,\, {\rm odd} \,\, {\rm
integer} \,\, q\}
$$
$$
\widetilde{U_{t}}(\ZZ_{n}) := U_{t}(\ZZ_{n})\cup V_{t}(\ZZ_{n})
$$

\begin{corollary}\colabel{auto}
Let $k$ be a field of characteristic $\neq 2$, $n$ a positive
integer, $\xi$ a generator of the group $U_n(k)$ of order $\nu(n)$
and $t = 0, 1,\cdots, \nu(n) - 1$. Then $U_{t}(\ZZ_{n})$,
$\widetilde{U_{t}}(\ZZ_{n})$ are subgroups of the group of units
$U(\ZZ_{n})$ and there exists an isomorphism of groups
$$
\Aut _{\rm Hopf}(H_{4n, \, \xi^{t}}) \simeq k^{*}\times
U_{t}(\ZZ_{n}) \qquad {\rm or} \qquad \Aut_{\rm Hopf}(H_{4n, \,
\xi^{t}}) \simeq k^{*} \times \widetilde{U_{t}}(\ZZ_{n})
$$
the latter case holds if and only if $n$ and $\nu(n)$ are both
even.
\end{corollary}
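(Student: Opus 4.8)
The plan is to specialise the structure theorem \thref{2.2} to the case $l=t$ and read off the automorphism group from the admissible data. First I would invoke \thref{2.2} (equivalently \coref{endosmh}): every Hopf algebra automorphism $\psi$ of $H_{4n,\,\xi^t}$ has the form \equref{nouppsi}, namely $\psi(a\# h)=u(a)\,r(h_{(1)})\#v(h_{(2)})$, where $p$ is the trivial morphism \equref{p-ul}, $u$ is the Hopf algebra map \equref{u-ul} attached to a scalar $\gamma\in k^{*}$ (so $u(x)=\gamma x$), $v=v_{s}$ is the map \equref{v-ul} with $v(c)=c^{s}$ and $\widehat{s}\in U(\ZZ_{n})$ (bijectivity forcing $\gcd(s,n)=1$), and $r:k[C_{n}]\to H_{4}$ is a Hopf algebra map. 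Putting $l=t$, the compatibility \equref{C7'ab10} evaluated at $a=x$, $h=c$ selects the admissible triples: when $r$ is trivial it reduces to $\xi^{t(s-1)}=1$, i.e. $\nu(n)\mid t(s-1)$, so $\widehat{s}\in U_{t}(\ZZ_{n})$; when $n$ is even and $r(c^{i})=g^{i}$ it reduces to $\xi^{t(s-1)}=-1$, which by the final computation in \thref{2.2} means $\nu(n)$ is even and $2t(s-1)=\nu(n)q$ for some odd $q$, so $\widehat{s}\in V_{t}(\ZZ_{n})$. In either case $\gamma$ cancels out of \equref{C7'ab10}, hence ranges freely over $k^{*}$.

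Next I would define $\Phi:\Aut_{\rm Hopf}(H_{4n,\,\xi^{t}})\to k^{*}\times U(\ZZ_{n})$, $\Phi(\psi)=(\gamma,\widehat{s})$. It is injective: $\psi$ is determined by $(u,r,p,v)$ with $p$ trivial, $u$ recovered from $\gamma$ and $v$ from $\widehat{s}$, while $r$ is \emph{forced} by $\widehat{s}$ (and $t$), since the trivial and the nontrivial choices of $r$ require the mutually exclusive conditions $\xi^{t(s-1)}=1$ and $\xi^{t(s-1)}=-1$ (here ${\rm char}(k)\neq 2$, so $U_{t}(\ZZ_{n})\cap V_{t}(\ZZ_{n})=\emptyset$). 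Computing its image: a pair $(\gamma,\widehat{s})$ is attained iff $\widehat{s}\in U_{t}(\ZZ_{n})$ (always, via trivial $r$) or $n$ is even and $\widehat{s}\in V_{t}(\ZZ_{n})$ (via $r(c^{i})=g^{i}$). Since $V_{t}(\ZZ_{n})=\emptyset$ whenever $\nu(n)$ is odd, the image is exactly $k^{*}\times U_{t}(\ZZ_{n})$ unless both $n$ and $\nu(n)$ are even, in which case it is $k^{*}\times\widetilde{U_{t}}(\ZZ_{n})=k^{*}\times\bigl(U_{t}(\ZZ_{n})\sqcup V_{t}(\ZZ_{n})\bigr)$. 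This is precisely the asserted dichotomy.

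The step I expect to be the main obstacle is verifying that $\Phi$ is a group homomorphism, so that the bijection onto its image becomes an isomorphism of groups. I would compute $\psi_{1}\circ\psi_{2}$ on a basis element $a\#c^{i}$: writing $r_{j}(c^{i})=g^{\epsilon_{j}i}$ with $\epsilon_{j}\in\{0,1\}$ and using $u_{1}(g)=g$, the fact that $u_{1}$ is an algebra map, and $cg=gc$, a short manipulation gives $(\psi_{1}\circ\psi_{2})(a\#c^{i})=(u_{1}u_{2})(a)\,g^{(\epsilon_{2}+\epsilon_{1}s_{2})i}\#c^{s_{1}s_{2}i}$. Thus the composite is again of the form \equref{nouppsi}, with $u_{1}u_{2}$ carrying the scalar $\gamma_{1}\gamma_{2}$ and with $v=v_{s_{1}s_{2}}$; the surviving $g$-powers reassemble into the unique admissible $r$ for the parameter $s_{1}s_{2}$ and do not enter $\Phi$. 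Hence $\Phi(\psi_{1}\circ\psi_{2})=(\gamma_{1}\gamma_{2},\widehat{s_{1}s_{2}})=\Phi(\psi_{1})\Phi(\psi_{2})$, and $\Phi$ is an injective homomorphism onto $k^{*}\times U_{t}(\ZZ_{n})$ (resp. $k^{*}\times\widetilde{U_{t}}(\ZZ_{n})$).

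Finally I would establish that $U_{t}(\ZZ_{n})$ and $\widetilde{U_{t}}(\ZZ_{n})$ are subgroups of $U(\ZZ_{n})$ directly, via the identity $\xi^{t(ss'-1)}=\bigl(\xi^{t(s-1)}\bigr)^{s'}\,\xi^{t(s'-1)}$, which makes sense because $\nu(n)\mid n$ renders $t(s-1)\bmod\nu(n)$ a well-defined function of $\widehat{s}\in\ZZ_{n}$. If $\xi^{t(s-1)}$ and $\xi^{t(s'-1)}$ lie in $\{1\}$ (resp. $\{\pm1\}$), the right-hand side does too, giving closure under products; applying the same identity to a representative $s'$ of $\widehat{s}^{-1}$, for which $\xi^{t(ss'-1)}=1$, forces $\xi^{t(s'-1)}=\bigl(\xi^{t(s-1)}\bigr)^{-s'}\in\{1\}$ (resp. $\{\pm1\}$), giving closure under inverses. (Alternatively, both sets arise as the projection of the subgroup $\Phi(\Aut_{\rm Hopf})$ onto the second factor.) Putting the four steps together yields $\Aut_{\rm Hopf}(H_{4n,\,\xi^{t}})\simeq k^{*}\times U_{t}(\ZZ_{n})$ in general, upgraded to $k^{*}\times\widetilde{U_{t}}(\ZZ_{n})$ exactly when $n$ and $\nu(n)$ are both even.
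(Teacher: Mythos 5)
Your proposal is correct and takes essentially the same approach as the paper's proof: both read off from the proof of \thref{2.2} that every automorphism is determined by a pair $(\gamma,\hat{s})\in k^{*}\times U(\ZZ_{n})$ with the choice of $r$ forced by whether $\xi^{t(s-1)}$ equals $1$ or $-1$, both verify multiplicativity by composing on basis elements, and both check the subgroup property of $U_{t}(\ZZ_{n})$ and $\widetilde{U_{t}}(\ZZ_{n})$ by hand. The only cosmetic differences are your uniform $\epsilon$-exponent bookkeeping for $r$ in place of the paper's three case-by-case composition identities ($\psi\circ\psi$, $\psi\circ\varphi$, $\varphi\circ\varphi$), and your identity $\xi^{t(ss'-1)}=\bigl(\xi^{t(s-1)}\bigr)^{s'}\xi^{t(s'-1)}$ in place of the paper's direct divisibility manipulations.
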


\begin{proof}
To start with we prove that $U_{t}(\ZZ_{n})$ and
$\widetilde{U_{t}}(\ZZ_{n})$ are subgroups of $U(\ZZ_{n})$.
Indeed, take $s$, $p \in U_{t}(\ZZ_{n})$. Then $\nu(n) ~|~
t(s-1)$, $\nu(n) ~|~ t(p-1)$. It follows that $\nu(n) ~|~
t(s-1)(p-1)$ which is equivalent to $\nu(n) ~|~ t(sp - p - s +
1)$. Moreover, we also have $\nu(n) ~|~ t(s + p - 2)$. Thus, we
get $\nu(n) ~|~ t(sp - 1)$. Hence, we obtain $sp \in
U_{t}(\ZZ_{n})$ as desired.

Take now $s \in U_{t}(\ZZ_{n})$ and $l \in V_{t}(\ZZ_{n})$. Then
$2t(l-1) = \nu(n) q$, for an odd integer $q$ and $t(s-1) = \nu(n)
w$, for some integer $w$. It follows that $t(s-1)(l-1) = \nu(n) w
(l-1)$ which is equivalent to $t(sl - s - l + 1) = \nu(n) w
(l-1)$. Moreover, we also have $2t(s + l - 2) = \nu(n) (2w+q)$.
Hence we get $2t(sl-1) = \nu(n) \bigl(2wl + q\bigl)$ and since
$2wl + q$ is odd it follows that $sp \in V_{t}(\ZZ_{n})\subset
\widetilde{U_{t}}(\ZZ_{n})$ as desired. Finally, if $l$, $r \in
V_{t}(\ZZ_{n})$ then $2t(l-1) = \nu(n)q_{1}$ and $2t(r-1) =
\nu(n)q_{2}$, for some odd integers $q_{1}$, $q_{2}$. It is
straightforward to see that $2t(lr-1) = \nu(n) \bigl(q_{1}(r-1) +
(q_{1} + q_{2})\bigl)$ and since $q_{1}(r-1) + (q_{1} + q_{2})$ is
an even integer we get $\nu(n) ~|~ (lr - 1)$ and thus $lr \in
U_{t}(\ZZ_{n}) \subset \widetilde{U_{t}}(\ZZ_{n})$.

Suppose first that $n$ is odd. Then according to the proof of
\thref{2.2}, any Hopf algebra automorphism of $H_{4n, \, \xi^{t}}$
has the following form:
\begin{equation}\eqlabel{class1}
\psi_{\gamma, \, s}: H_{4n, \, \xi^{t}} \to H_{4n, \, \xi^{t}},
\qquad \psi_{\gamma, \, s}(a \# h) = u(a) \# v(h)
\end{equation}
where the Hopf algebra maps $u: H_{4} \rightarrow H_{4}$ and $v:
k[C_{n}] \to k[C_{n}]$ are defined as follows:
$$
\qquad u(1) = 1, \quad u (g) = g, \quad u(x) = \gamma \, x, \quad
v(c) = c^{s}
$$
for some non-zero scalar $\gamma \in k^*$ and $s \in
U_{t}(\ZZ_{n})$ . By a straightforward computation it can be seen
that $\psi_{\gamma, \, s} \circ \psi_{\zeta, \, s'} = \psi_{\gamma
\zeta, \, ss'}$. Then, the following map is an isomorphism of
groups:
$$
\Gamma: \Aut _{\rm Hopf}(H_{4n, \, \xi^{t}}) \to k^{*}\times
U_{t}(\ZZ_{n}), \qquad \Gamma(\psi_{\gamma, \, s}) = (\gamma, \,
s)
$$

Assume now that $n$ is even. Then, again by the proof of
\thref{2.2}, we have two types of Hopf algebra automorphism for
$H_{4n, \, \xi^{t}}$. The first such type of automorphisms is
given by the maps $\psi_{\gamma, \, s}$ defined in \equref{class1}
while the second one is given by:
$$
\varphi_{\sigma, \, l}: H_{4n, \, \xi^{t}} \to H_{4n, \, \xi^{t}},
\qquad \varphi_{\sigma, \, l}(a \# h) = u(a)r(h_{(1)}) \#
v(h_{(2)})
$$
for some $\sigma \in k^*$ and $l \in V_{t}(\ZZ_{n})$, where the
Hopf algebra maps $u: H_{4} \rightarrow H_{4}$, $r: k[C_{n}] \to
H_{4}$ and $v: k[C_{n}] \to k[C_{n}]$ are defined as follows:
$$
\qquad u(1) = 1, \quad u (g) = g, \quad u(x) = \sigma \, x,
\quad r(c) = g, \quad v(c) = c^{l}
$$
Now define $\widetilde{\Gamma}: \Aut_{\rm Hopf}(H_{4n, \,
\xi^{t}}) \to k^{*} \times \widetilde{U_{t}}(\ZZ_{n})$ as follows:
$$
\widetilde{\Gamma}(\psi_{\gamma, \, s}) = (\gamma, \, s)\in k^{*}
\times U_{t}(\ZZ_{n}), \qquad \widetilde{\Gamma}(\varphi_{\sigma,
\, l}) = (\sigma, \, l) \in k^{*} \times V_{t}(\ZZ_{n})
$$
for all $\gamma$, $\sigma \in k^{*}$ and $s \in U_{t}(\ZZ_{n})$,
$l \in V_{t}(\ZZ_{n})$. As $U_{t}(\ZZ_{n}) \cap V_{t}(\ZZ_{n}) =
\emptyset$ it can be easily seen that $\widetilde{\Gamma}$ is well
defined and bijective. Moreover:
$$
\widetilde{\Gamma}(\psi_{\gamma, \, s} \circ \psi_{\zeta, \, s'})
= \widetilde{\Gamma}(\psi_{\gamma \zeta, \, s s'}) = (\gamma
\zeta, \, s s') = (\gamma, \, s)(\zeta, \, s') =
\widetilde{\Gamma}(\psi_{\gamma, \, s})
\widetilde{\Gamma}(\psi_{\zeta, \, s'})
$$
$$
\widetilde{\Gamma}(\psi_{\gamma, \, s} \circ \varphi_{\sigma, \,
l}) = \widetilde{\Gamma}(\varphi_{\gamma \sigma, \, sl}) = (\gamma
\sigma, \, sl) = (\gamma, \, s)(\sigma, \, l) =
\widetilde{\Gamma}(\psi_{\gamma, \,
s})\widetilde{\Gamma}(\varphi_{\sigma, \, l})
$$
$$
\widetilde{\Gamma}(\varphi_{\sigma, \, l} \circ \varphi_{\tau, \,
l'}) = \widetilde{\Gamma}(\psi_{\sigma \tau, \, ll'}) = (\sigma
\tau, \, ll') = (\sigma, \, l)(\tau, \, l') =
\widetilde{\Gamma}(\varphi_{\sigma, \, l})
\widetilde{\Gamma}(\varphi_{\tau, \, l'})
$$
i.e. $\widetilde{\Gamma}$ is a morphism of groups.
\end{proof}

\begin{examples}
$1.$ Let $K = C_2 \times C_2 = \lan a, b \, | \, a^{2} = b^{2} =
1, \, ab = ba \ran$ be the Klein's group and $k$ a field of
characteristic $\neq 2$. By a straightforward computation, similar
to the one performed in \prref{mapex1}, we can prove that $(H_4,
k[C_2\times C_2], \triangleleft, \triangleright)$ is a matched
pair if and only if both actions $\triangleleft$, $\triangleright$
are trivial or the right action $\triangleleft$ is trivial and the
left action $\triangleright = \triangleright^{i}$, for $i = 1, 2,
3$, where $\triangleright^{i}$ are given as follows:
\begin{center}
\begin{tabular} {l | r  r  r  r  }
$\triangleright^{1}$ & 1 & $g$ & $x$ & $gx$\\
\hline 1 & 1 & $g$ & $x$ & $gx$\\
$a$ & 1 & $g$ & $x$ & $gx$ \\
$b$ & 1 & $g$ & $-x$ & $-gx$ \\
$ab$ & 1 & $g$ & $-x$ & $-gx$ \\
\end{tabular} \qquad
\begin{tabular} {l | r  r  r  r  }
$\triangleright^{2}$ & 1 & $g$ & $x$ & $gx$\\
\hline 1 & 1 & $g$ & $x$ & $gx$\\
$a$ & 1 & $g$ & $-x$ & $-gx$ \\
$b$ & 1 & $g$ & $x$ & $gx$ \\
$ab$ & 1 & $g$ & $-x$ & $-gx$ \\
\end{tabular} \qquad
\begin{tabular} {l | r  r  r  r  }
$\triangleright^{3}$ & 1 & $g$ & $x$ & $gx$\\
\hline 1 & 1 & $g$ & $x$ & $gx$\\
$a$ & 1 & $g$ & $-x$ & $-gx$ \\
$b$ & 1 & $g$ & $-x$ & $-gx$ \\
$ab$ & 1 & $g$ & $x$ & $gx$ \\
\end{tabular}
\end{center}
However, using \thref{toatemorf}, we can prove after a rather long
but straightforward computation that all the associated bicrossed
products $H_4 \bowtie k[C_2 \times C_2]$ are isomorphic to the
trivial one, namely the tensor product $H_{4} \ot k[C_2 \times
C_2]$ of Hopf algebras.\footnote{A detailed proof of these results
can be provided upon request.}

$2.$ The examples we have chosen in this section are relevant for
showing that there might be an unexplored theory in finding some
new families of finite quantum groups which are the bicrossed
product of two given finite dimensional Hopf algebras $A$ and $H$.
They can serve as a model for proving similar results in the
future. In \cite{Gabi} the problem was solved for the case $A = H
= H_4$ by showing that a Hopf algebra $E$ factorizes through $H_4$
and $H_4$ if and only if $E \cong H_4 \ot H_4$ or $E \cong H_{16,
\, \lambda}$, for some $\lambda \in k$, where $H_{16, \, \lambda}$
is the Hopf algebra generated by $g$, $x$, $G$, $X$ subject to the
relations:
$$
g^2 = G^2 = 1 \quad  x^2 = X^2 = 0, \quad gx = -xg, \quad GX =
-XG,
$$
$$
gG = Gg, \quad gX = -Xg, \quad x G = - Gx, \quad xX + Xx = \lambda
\, (1 -  Gg)
$$
with the coalgebra structure given by
$$
\Delta (g) = g \ot g, \quad \Delta (x) = x\ot 1 + g \ot x, \quad
\Delta (G) = G \ot G, \quad \Delta (X) = X\ot 1 + G \ot X,
$$
$$
\varepsilon (g) = \varepsilon (G) = 1, \qquad \varepsilon (x) =
\varepsilon (X) = 0
$$
However, up to an isomorphism of Hopf algebras there are only
three Hopf algebras that factorize through $H_4$ and $H_4$, namely
$$
H_4 \ot H_4, \qquad H_{16, \, 0} \qquad {\rm and} \qquad H_{16, \,
1} \cong D(H_4)
$$
where $D(H_4)$ is the Drinfel'd double of $H_4$. For the proofs of
these theorems and related results we refer to \cite{Gabi}.
\end{examples}

\section{Conclusions, outlooks and open problems} \selabel{probnoi}
The bicrossed product for groups or Hopf algebras has served as a
model for similar constructions in other fields of study:
algebras, coalgebras, Lie groups and Lie algebras, locally compact
groups, multiplier Hopf algebras, locally compact quantum groups,
groupoids, von Neumann algebras, etc. Thus, all the results proven
in this paper at the level of Hopf algebras can serve as a model
for obtaining similar results for all the fields mentioned above.
This is the first direction for further study.

The second one is a continuation of the classification problem for
Hopf algebras that factorize through two given Hopf algebras. The
first question in this direction is the following:

\textbf{Question 1:} \emph{Let $G$ and $H$ be two finite cyclic
groups. Describe by generators and relations and classify all
bicrossed products $k[G]^* \bowtie k[H]$ and $k[G]^* \bowtie
k[H]^*$?}

The third direction for further study is given by some open
questions that are directly related to the results of this paper.
For instance, having as a starting point \coref{endoDH} we can
ask:

\textbf{Question 2:} \emph{Let $G$ and $H$ be two finite groups
such that there exists an isomorphism of Hopf algebras $D(k[G])
\cong D(k[H])$. What is the relation between the groups $G$ and
$H$?}

We believe that there exist two non-isomorphic groups such that
their corresponding Drinfel'd doubles are isomorphic but
unfortunately we could not find such an example.

As we already mentioned in the introduction, one of the most
important results on the structure of products of groups is Ito's
theorem \cite{Ito}: any product of two abelian groups is a
meta-abelian group. Starting from here we can ask if an Ito type
theorem holds for Hopf algebras, that is:

\textbf{Question 3 (Ito theorem for Hopf algebras):} \emph{Let
$(A, H, \triangleleft, \triangleright)$ be a matched pair between
two commutative Hopf algebras. Is the bicrossed product $A \bowtie
H$ a meta-abelian Hopf algebra?}

What should a meta-abelian Hopf algebra be is part of the problem.
Having in mind the group case, one of the possible definitions is
the following: a Hopf algebra is called meta-abelian if it is
isomorphic as a Hopf algebra to a crossed product of two
commutative Hopf algebras in the sense of \cite{agorecia}.

Kaplansky's tenth conjecture was invalidated at the end of the
90's.  However, we believe that the following bicrossed version of
it might be true:

\textbf{Question 4 (Bicrossed Kaplansky's tenth conjecture):}
\emph{Let $A$ and $H$ be two finite dimensional Hopf algebras. Is
the set of types of isomorphisms of all bicrossed products $A
\bowtie H$ finite?}

We can iterate the bicrossed product by constructing new examples
of Hopf algebras obtained as a sequence of the form $ ((A
\bowtie^1 H_1) \bowtie^2 H_2) \cdots$. The following is a natural
question in the context:

\textbf{Question 5:} \emph{Let $A$, $H_1$, $H_2$ be Hopf algebras
and $(A, H_1, \triangleleft^1, \triangleright^1)$, $(A\bowtie^1
H_1, H_2, \triangleleft^2, \triangleright^2)$ matched pairs of
Hopf algebras where $A\bowtie^1 H_1$ is the bicrossed product
associated to the first matched pair. Do there exist two matched
pairs of Hopf algebras $(H_1, H_2, \triangleleft^{2'},
\triangleright^{2'})$ and $(A, H_1 \bowtie^{2'} H_2,
\triangleleft^{1'}, \triangleright^{1'})$ such that the canonical
map
$$
(A \bowtie^1 H_1) \bowtie^2 H_2 \to A \bowtie^{1'} (H_1
\bowtie^{2'} H_2), \quad (a \bowtie^1 g) \bowtie^2 h \mapsto a
\bowtie^{1'} (g \bowtie^{2'} h)
$$
is an isomorphism of Hopf algebras? }

\section*{Acknowledgment} The authors would like to thank the referee for his careful reading of this manuscript, as well as Marc Keilberg
for his remarks on \coref{endoDH}.

\end{document}